  \newcommand{\R}{\ensuremath{\mathbf{R}}}%
  \newcommand{\N}{\ensuremath{\mathbf{N}}}%
 \newcommand{\mr}  {\mathring}%
 \newcommand{\bmr}[1]  {\overbrace{#1}^\circ}%
\newcommand{\m}  {\mathcal}%
\newcommand{\un}{\underline}%
  \newcommand{\M}  {\cal M}%
  \newcommand{\Z}{\ensuremath{\mathbf{Z}}}%
  \newcommand{\T}{\ensuremath{\mathbf{T}}}%
  \newcommand{\F}{\ensuremath{\mathbf{F}}}%
  \newcommand{\A}{\ensuremath{{\mathcal  A}}}
  \newcommand{\B}{\ensuremath{{\mathcal  B}}}
  \newcommand{\la}{\lambda}
  \newcommand{\eps}{\varepsilon}
\newcommand{\tens}{\otimes}
 \newcommand{\plB}[3]{\mathring{#1}_{#2_1}\otimes_{\m B}\cdots\otimes_{\m B}
\mathring{#1}_{#2_{#3}}}
\newtheorem{thm}{Theorem}[section]
\newtheorem{lemma}[thm]{Lemma}
\newtheorem{corollary}[thm]{Corollary}
\newtheorem{prop}[thm]{Proposition}
\theoremstyle{remark}
\newtheorem{rem}[thm]{Remark}
\theoremstyle{definition}
\author{Tao Mei\footnote{Research partially supported by the NSF grant DMS-1266042.} \ \ \ \ and \ \ \ \ \'Eric Ricard }
\title{Free Hilbert Transforms }
\begin{document}\maketitle

\begin{abstract} We study   analogues of classical Hilbert transforms as fourier multipliers on free groups.  We prove their  complete boundedness on non commutative $L^p$ spaces associated with the free group von Neumann algebras for all $1<p<\infty$. 
This implies that the decomposition of the free group $\F_\infty$ into reduced words starting with distinct    free generators  is completely unconditional in $L^p$. 
We study the case of  Voiculescu's amalgamated free products of von Neumann algebras as well. As by-products, we obtain a positive answer to a compactness-problem posed by Ozawa, a length independent estimate for Junge-Parcet-Xu's  free Rosenthal
 inequality, a Littlewood-Paley-Stein type inequality for geodesic paths   of free groups, and  a length reduction
 formula for $L^p$-norms of free group von Neumann algebras.

\end{abstract}
\section{Introduction}
Hilbert transform is a fundamental and influential object in the
mathematical analysis and signal processing. It was originally defined
for periodic functions. Given a trigonometric polynomial
$f(z)=\sum_{k=-N}^N c_k z^k$, let $P_+f=\sum_{k=1}^N c_k z^k$ be its
analytic part and $P_-f=\sum_{k=-N}^{-1} c_k z^k$ be its anti-analytic
part.  The Hilbert transform is formally defined as
$$H=-iP_++iP_-$$ and
clearly extends to an unitary on $L^2(\T)$. The case of $L^p$,
$1<p<\infty$ is more subtle. M. Riesz first proved that $H$ extends to
a bounded operator on $L^p(\T)$ for all $1<p<\infty$.  It is also well
known that $H$ is unbounded on $L^p(\T)$ at the end point $p=1,\infty$
but is of weak type (1,1). In modern harmonic analysis, the Hilbert
transform is considered as a basic example of Calder\'on-Zygmund
singular integral.  Its analogues have been studied in much more
general situations with connections to $L^p$-approximation, the
Hardy/BMO spaces, and more applied subjects.  

The Hilbert transforms
 appears also as the key tool to define conjugate functions in abstract
settings such as for Dirichlet algebras. In operator algebras, they show
up through Arveson's concept of maximal subdiagonal algebra of a von
Neumann algebra $\M$. Results about $L^p$-boundedness and weak-type
(1,1)-estimates in this situation were obtained by
N. Randrianantoanina in \cite{Ran}.

The object of this article is a natural analogue of the Hilbert
transform in the context of amalgamated free products of von Neumann
algebras.  The study is from a different view point to Arveson's and
is motivated from questions in the theory of $L^p$-Herz-Schur
multipliers on free groups.

Our model case is   the von Neumann algebra  $({\cal L}(\F_\infty),\tau)$ of free group  with  a countable set of  generators
$g_1,g_2,...$.
The associated $L^p$-spaces $L^p(\hat \F_\infty)$ is a non commutative
analogue of $L^p(\hat \Z)=L^p(\T)$.  Let ${\cal L}_{g_i^+} , {\cal
  L}_{g_i ^-}$  be the subsets of $\F_\infty$ of reduced words
starting respectively with $g_i, g_i^{-1}$.  One can naturally
associate to them projections; given a finitely supported function
$\hat x$ on $\F_\infty$, $\hat x=\sum_{g\in \F_\infty} c_g\delta_g,
c_g\in {\mathbb C}$, define $$L _{g_i^+}\hat x=\sum_{g\in {\cal
    L}_{g_i^+}}c_g\delta_g$$ and $L_{g_i^-}\hat x$ similarly. All of them
obviously extend to norm 1 projections on
$\ell_2(\F_\infty)=L^2(\hat\F_\infty)$. A natural question is whether these
projections are bounded on $L^p(\hat\F_\infty)$ and whether the
decomposition $\F_\infty=\{e\}\cup_{i\in \N,\eps\in \pm} {\cal L}_{g_k^\eps} $ is
unconditional in $L^p(\hat\F_\infty)$. To that purpose, we define a free
analogue of the classical Hilbert transform as the following map
\begin{eqnarray}
\label{H}
H_\varepsilon =  \varepsilon_1L_{g_1^+}+ \varepsilon_{-1} L_{g_1^-}+ \varepsilon_2 L_{g_2^+}+  \varepsilon_{-2} L_{g_2^-}+...
\end{eqnarray}
for $\varepsilon_i=\pm1$. We are interested in the (complete)
boundedness of $H_\varepsilon$ on $L^p(\hat \F_\infty)$ as well as possible
connections to semigroup-Hardy/BMO spaces and the $L^p$-approximation
property in the non commutative setting.  The question of the
$L^p(\hat \F_\infty)$-boundedness of $H_\varepsilon$ has been around for
some time. The authors learned from G. Pisier that  P. Biane asked this question and discussed with him around 2000. N. Ozawa pointed out  that the
$L_4(\hat \F_\infty)$-boundedness of $H_ \varepsilon$  answers
positively the problem he posed  at the end of  \cite{O10}. Junge-Parcet-Xu obtained some length dependent results for related questions in their work of Rosenthal's inequality  for amalgamated free products  (\cite{JPX07}).

  The first result (Theorem \ref{main}) of this article is a positive
 answer to the $L^p$-boundedness question of $H_\varepsilon$ in the
 general case of Voiculescu's amalgamated free products, which
 includes the free group of countable many generators as a particular case (Theorem \ref{main2}).
 
 One can also consider two similar Hilbert transforms. One is $$H_\varepsilon^{Ld}=\eps_e P_{d-1}+\sum_{h, |h|=d} \varepsilon_h L_h$$ with $P_{d}$ the projection onto reduced words with length $\leq d$ and $L_h$'s   the projections onto reduced words starting
 with $h$.   Another is 
 $$H_\varepsilon^{(d)}=\eps_e P_{d-1}+\sum_{g, |g|=1} \varepsilon_g
 L_g^{(d)}$$ with $L_g^{(d)}$'s the projections onto reduced words
 having $g$ as its $d$-th letter. Their (complete) boundedness on
 $L^p(\hat \F_n)$ can be easily deduced from that of $H_\varepsilon$
 with constants depending on $n$.  The main result of this article
 (Theorem \ref{hn}) says that $H^{(d)}_\varepsilon$'s are completely
 bounded on $L_p(\hat \F_\infty)$ for any $d\geq 1$.  While
 $H_\eps^{Ld}$'s are bounded for all $1<p<\infty$ but not completely
 bounded on $L_p(\hat \F_\infty),$ for any $p\neq 2, d\geq 2$.  The
 authors also prove a length reduction formula to compute $L^p$-norms
 and a Rosenthal inequality with length independent constants. 
 
 A classical argument, in proving the $L^p$-boundedness of the Hilbert transform $H$ is to use the following Cotlar's identity 
\begin{eqnarray}\label{Cotlar}
|H(f)|^2=|f|^2+H(\bar f Hf+\overline{Hf}f),
\end{eqnarray}
that allows to get the result for $L_{2p}$ from that of $L^p$ and
implies optimal estimates.  This identity holds in a general setting,
if one can identify a suitable ``analytic" algebra and defines the
corresponding Hilbert transform as the subtraction of two projections
on this algebra and its adjoint.  This is the case of non commutative
Hilbert transforms associated with Arveson's maximal subdiagonal
algebras (see Lemma 8.5 of \cite{PX03}).\footnote{Arveson's
  ``analytic" subalgebras do not seem available for amalgamated free
  products of von Neumann algebras in general. They are available for
  free group von Neumann algebras but the corresponding Hilbert
  transforms are different from ours and their formulations as
  Herz-Schur multipliers are difficult to determine. }  After obtaining
an initial proof of Theorem \ref{main2}, we observed that a free
version of Cotlar's identity (see (\ref{cotlar})) holds in the context of amalgamated free
products for $H_\eps$ with $|\varepsilon_k|\leq 1$.\footnote{The
  classical Cotlar's formula fails for $H=-iP_++\varepsilon iP_-$ if
  $\varepsilon\neq\pm1$.}  We were slightly surprised when this
observation came out, given that $H_\varepsilon$, defined in
(\ref{H}), is associated to subsets instead of subalgebras. On the
other hand, once it draws our attention, the proof of the identity and
Theorem \ref{main2} are not hard.  It is a surprise that  this identity 
was not noticed earlier.


We will introduce the notations and necessary preliminaries in Section
2.  The  Cotlar's formula for amalgamated free
products and   Theorem \ref{main} are proved in Section 3.1.  
Section 3.2 includes a few immediate consequences. Section 3.3 obtains a length  independent   Rosenthal inequality, which was initially proved by Junge/Parcet/Xu (Theorem A, \cite{JPX07}) restricted to   a fixed length. 
Section 4.1 proves our main result Theorem \ref{hn}.    Corollary \ref{PP} of that section
gives a length reduction formula and generalizes the main result of \cite {PP05}. Corollary \ref{Ozawa} (iii) answers positively the problem that Ozawa posed at the end of \cite{O10}. Section 4.3 studies   Littlewood-Paley-Stein type inequalities. Corollary \ref{branch} shows that the projection onto a geodesic path of the free group is completely bounded on $L^p$ for $1<p<\infty$.
 Theorem \ref{llpp} is a dyadic Littlewood-Paley-Stein
inequality for geodesic paths of free groups.

 
\section{Notations and preliminaries}

We refer the reader to \cite{VDN} and \cite{JPX07} for the definition
of amalgamated free products, and to \cite{PX03} and the references
therein for a formal definition and basic properties on non
commutative $L^p$ spaces.  For simplicity, we will restrict to the
case of finite von Neumann algebras but all the arguments should be
easily adapted to type III algebras with n.f. states.

 About noncommutative $L^p$-spaces associated to a finite von Neumann
 algebra $(\A,\tau)$, we will mainly need duality, interpolation
 and the non commutative Khintchine inequality (\cite{L86},
 \cite{LP91}) in $L^p(\A)$ as well as $p$-row and $p$-column spaces.
As usual we denote by $c_k=e_{k,1}$ and $r_k=e_{1,k}$, $d_k=e_{k,k}$ the canonical basis of the column, row and diagonal subspaces of the Schatten $p$-class $S_p(\ell_2(\N))$.

We will use the duality  $\langle x,y\rangle_{L^p,L^q}=\tau(xy)$ to identify 
$L^q(\A)$ with $L^p(\A)^*$ isometrically for $1\leq p<\infty$. At operator space level
this gives a complete isometry $L^p(A)^*=L^q(\A)^{op}$, see \cite{P98}.

 As $\A=L^\infty(\A)$ is finite,  the obvious embedding $L^\infty(\A)\subset L^1(\A)$ 
makes $(L^\infty(\A),L^1(\A))$ a compatible couple of Banach spaces. For $1< p<\infty$, the complex interpolation spaces between $\A$ and $L^1(\A)$ with index $\frac 1p$ is isometric to $L^p(\A)$ : 
\begin{eqnarray}
(L^\infty(\A),L^1(\A))_{\frac1p}&=&L^p(\A).
\end{eqnarray}

For a sequence $(x_k)$ in $L^p(\A)$, we use the classical notation 
$$\|(x_k)\|_{L^p(\A, \ell_2^{c})}=\|(\sum_k|x_k|^2)^\frac12\|_p,\quad \|(x_k)\|_{L^p(\A, \ell_2^{r})}=\|(\sum_k|x^*_k|^2)^\frac12\|_p,$$ and 
$$\|(x_k)\|_{L^p(\A, \ell_2^{cr})}=\left\{\begin{array}{ll} \max\{
\|(x_k)\|_{L^p(\A, \ell_2^{c})}, \|(x^*_k)\|_{L^p(\A, \ell_2^{c})}\}&
\textrm{if } 2\leq p\leq \infty\\ \inf_{ y_k+z_k=x_k}
\|(y_k)\|_{L^p(\A, \ell_2^{c})}+ \|(z^*_k)\|_{L^p(\A, \ell_2^{c})}&
\textrm{if } 0<p<2\end{array}\right..$$ We refer to \cite{P98} for non
commutative vector-valued $L^p$-spaces. The above definition is justified by the non commutative Khintchine inequalities:

\begin{lemma} (\cite{L86}, \cite{LP91},\cite{HM07}) Let $\varepsilon_k$
be independent Rademacher random variables, then for $1\leq p<\infty$, 
\begin{eqnarray}
\alpha_pE_\eps\|\sum_k \varepsilon_k\otimes x_k\|_p&\leq& \|(x_k)\|_{L^p(\A, \ell_2^{cr})}\leq \beta_p E_\eps\|\sum_k \varepsilon_k\otimes x_k\|_p. \label{khin}
\end{eqnarray}
\end{lemma}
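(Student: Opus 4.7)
The plan is to prove (\ref{khin}) in the two ranges $p \geq 2$ and $1 \leq p \leq 2$ separately, using the case $p=2$ as the anchor where orthogonality of $(\eps_k)$ gives $E_\eps\|\sum_k \eps_k \otimes x_k\|_2^2 = \sum_k \|x_k\|_2^2 = \|(x_k)\|_{L^2(\A,\ell_2^{cr})}^2$, so both inequalities are equalities.

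For $2 \leq p < \infty$, recall that in this range $\|(x_k)\|_{L^p(\A,\ell_2^{cr})} = \max\{\|(x_k)\|_{L^p(\A,\ell_2^c)},\|(x_k)\|_{L^p(\A,\ell_2^r)}\}$. The upper bound $\|(x_k)\|_{L^p(\A,\ell_2^{cr})} \leq \beta_p E_\eps \|\sum_k \eps_k \otimes x_k\|_p$ I would deduce from the identity $\sum_k x_k^* x_k = E_\eps \bigl(\sum_k \eps_k \otimes x_k\bigr)^*\bigl(\sum_k \eps_k \otimes x_k\bigr)$ combined with Jensen's inequality in the $L^{p/2}$-norm, followed by Kahane's inequality to replace the $L^2(d\eps)$-moment by the $L^1(d\eps)$-moment; the row piece is symmetric. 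For the reverse inequality $\alpha_p E_\eps \|\sum_k \eps_k \otimes x_k\|_p \leq \|(x_k)\|_{L^p(\A,\ell_2^{cr})}$, the approach I would take is Pisier's $2 \times 2$ block trick: embed $\sum_k \eps_k \otimes x_k$ into an off-diagonal position of a self-adjoint operator whose square has diagonal blocks $\sum_k x_k^* x_k$ and $\sum_k x_k x_k^*$, then apply spectral calculus for $t \mapsto t^{p/2}$. An alternative is Stein interpolation between the trivial $p=\infty$ case (where the bound is just the triangle inequality) and the equality at $p=2$.

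For $1 \leq p < 2$, one passes by duality: the $\ell_2^{cr}$-norm is defined as an infimum decomposition precisely so that $L^p(\A,\ell_2^{cr})$ is the dual of $L^{p'}(\A,\ell_2^{cr})$ under the pairing $\langle (x_k),(y_k)\rangle = \sum_k \tau(x_k y_k^*)$, while the Rademacher average $(x_k) \mapsto E_\eps \|\sum_k \eps_k \otimes x_k\|_p$ is self-dual under the same pairing. Dualizing the inequalities at exponent $p' \geq 2$ established above then yields (\ref{khin}) in the range $1 \leq p < 2$.

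The main obstacle is obtaining the sharp numerical constants, especially at the endpoint $p = 1$, which is the substance of the Haagerup-Musat refinement \cite{HM07}; the qualitative forms suffice are those already present in \cite{L86} and \cite{LP91}. For the applications in the present paper only finiteness of $\alpha_p$ and $\beta_p$ is needed, so the dualization route above is adequate without pursuing optimal constants.
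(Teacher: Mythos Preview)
The paper does not prove this lemma at all: it is stated as a known result with references to \cite{L86}, \cite{LP91}, \cite{HM07}, and only the values of the constants are commented upon afterwards. So there is nothing to compare against in terms of approach.

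Your sketch follows the standard Lust-Piquard/Lust-Piquard--Pisier route and is broadly correct, but two points deserve correction. First, your offered alternative of ``Stein interpolation between the trivial $p=\infty$ case (where the bound is just the triangle inequality) and the equality at $p=2$'' does not work: the noncommutative Khintchine inequality \emph{fails} at $p=\infty$ (indeed the paper notes $\alpha_p \sim \sqrt{p}$ as $p\to\infty$), and the bound $\sup_\eps\|\sum_k \eps_k x_k\|_\infty \leq C\,\|(x_k)\|_{L^\infty(\A,\ell_2^{cr})}$ is simply false, not a triangle inequality. Second, the $2\times 2$ block trick does reduce matters to self-adjoint $A_k$, but the remaining step $E_\eps\,\mathrm{tr}\bigl(\sum_k \eps_k A_k\bigr)^{p} \leq C_p\,\mathrm{tr}\bigl(\sum_k A_k^2\bigr)^{p/2}$ is not just ``spectral calculus for $t\mapsto t^{p/2}$'': the square $\bigl(\sum_k \eps_k A_k\bigr)^2$ is not the diagonal matrix $\sum_k A_k^2$ before averaging, and $t\mapsto t^{p/2}$ is operator convex only for $2\leq p\leq 4$. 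One still needs a genuine argument (combinatorial expansion for even $p$ and interpolation, or an iteration). Your duality passage to $1\leq p<2$ is correct and is exactly how \cite{LP91} proceeds.
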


Here $\varepsilon_k$ can also be replaced by  other orthonormal sequences of
some $L^2(\Omega, \mu)$, e.g. $z^{2^k}$ on the unit circle or standard
Gaussian.  For $z^{2^k}$ on the unit circle or standard
Gaussian, the best constant $\beta_p$
is $ \sqrt 2$ for $p=1$ and is $1$ for $p\geq 2$ (see
\cite{HM07}). $\alpha_p$ is $1$ for $1\leq p\leq 2$ and is of order
$\sqrt p$ as $p\rightarrow \infty$. (\ref{khin}) was pushed further to
the case of $0<p<1$ by Pisier and the second author recently (see \cite{PR14}).

\medskip

 If $(\A_k,\tau_k), k\geq 1$ are finite von Neumann algebras with
 a common sub-von Neumann algebra $(\B,\tau)$ with conditional
 expectation $E$ so that $\tau_k E=\tau$, we denote by
 $(\A,\tau)=(*_{\B}\A_k,\tau_k)$ the amalgamated free product of
 $(\A_k, \tau_k)$'s over $B$. We will briefly recall the construction
 to fix notation.

For any $x\in \A_k$, we denote by $\mathring{x}=x-E x$ and $\mathring \A_k=\{\mathring{x}; x\in \A_k\}$; there is a natural decomposition $\A_k=\B\oplus \mathring \A_k$.

The space
$$\m W =\B \oplus_{n\geq 1} \bigoplus_{\substack{(i_1,...,i_n)\in
    \N^n \\ i_1\neq i_2...\neq i_n}}\plB {\m A} i n=\oplus_{n\geq
  0}\bigoplus_{\substack{(i_1,...,i_n)\in
    \N^n \\ i_1\neq i_2...\neq i_n}} \m W_{ \un i}$$ is a $*$-algebra using concatenation and centering
with respect to $\B$. The natural projection $E$ onto $\B$ is a
conditional expectation and $\tau E$ is a trace on $\m W$ still
denoted by $\tau$. Then $(\A,\tau)$ is the finite von Neumann algebra
obtained by the GNS construction from $(\m W,\tau)$. Thus $\m W$ is
weak-$*$ dense in $\m A$ and dense in $L^p(\A)$ for $p<\infty$.

For multi-indices, we write $(i_1,...,i_n)=\un i\preceq^L \un j=(j_1,...,j_m)$ if
$m\geq n$ and $i_k=j_k$ for $k\leq n$ and $\un i\preceq^R \un
j=(j_1,...,j_m)$ if $m\geq n$ and $i_{m-k+1}=j_{m-k+1}$ for $1\leq
k\leq n$.  We also put $\un i\prec ^L \un j$ if $\un i\preceq^L \un j$
and $n<m$. We extend those relations for non zero elementary tensors
$g\in \m W_{\un i}$ and $h\in \m W_{\un j}$, we write $g\prec^L h$ if
$\un i\prec^L \un j$ and $g\prec^R h$ if $\un i\prec^R\un j$.

For $k\in \N$, put \begin{eqnarray*}\m L_k = \oplus_{k\preceq^L \un i}
  \m W_{\un i},\qquad \textrm{and}\qquad \m R_k = \oplus_{k\preceq^R \un i} \m W_{\un i}.\end{eqnarray*}
We denote the
associated orthogonal projections on $\m W$  by
$L_k$ and $R_k$. We use the convention $L_0=E$.

Given a sequence of $\varepsilon_k\in \B, k\in \N$, and $x\in \m W$, we let
$$H_\varepsilon(x) =\varepsilon_0 E(x)+ \sum_{ k\in \N} \varepsilon_k L_{ k}(x)
;\qquad H_\varepsilon^{op}= E(x)\varepsilon_0^*+ \sum_{k\in \N}
R_{k}(x)\varepsilon_k^*.$$ The main theorem is that, for $1<p<\infty$, $H_\varepsilon$ extends to $L^p$ 
and for any $x\in
L^p(\A)$, 
 \[  \|H_\varepsilon x\|_{p }\simeq^{ c_{p}} \| x\|_{p} ,\]
 for any choice of unitaries $\varepsilon_k\in \m Z(\m B)$ in the center of $\B$ and $1<p<\infty$.



\section{Amalgamated Free products }
\subsection{The   Cotlar  formula for free products}

We start with very basic observations, recall that $\mathring x=x-E x$ for $x\in \A$.
  \begin{prop}\label{ghid1}   For  $g\in \m W$, and $\varepsilon,\varepsilon'$ sequences in $\B$ 
\begin{enumerate}[(i)]
   \item  $H_\varepsilon(g^{*})=(H_\varepsilon^{op}(g))^{*}$.
\item $H_\varepsilon(\mr g)=\bmr{H_\varepsilon(g)}$.
 \item  $H_\varepsilon H_{\varepsilon'}^{op}( g)=H_{\varepsilon'}^{op} H_\varepsilon(g)$.
 \end{enumerate}
\end{prop}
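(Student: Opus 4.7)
The strategy is to verify each identity on a general elementary tensor $g\in \m W_{\un i}$ with $\un i=(i_1,\ldots,i_n)$, and then extend by linearity and density. All three statements are purely algebraic on $\m W$, so no analytic input is needed; the only subtlety is keeping track of how taking adjoints, centering, and left/right $\m B$-actions interact with the combinatorics encoded by $L_k$ and $R_k$.

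For (i), the key observation is the intertwining relation $L_k(g^*)=R_k(g)^*$ for every $k\geq 1$, together with $E(g^*)=E(g)^*$. Both come from the fact that taking adjoint of an elementary tensor in $\mathring{\m A}_{i_1}\otimes_\m B\cdots\otimes_\m B\mathring{\m A}_{i_n}$ gives an elementary tensor in $\mathring{\m A}_{i_n}\otimes_\m B\cdots\otimes_\m B\mathring{\m A}_{i_1}$, so starting with $k$ is swapped with ending with $k$. Plugging this into the definition of $H_\varepsilon(g^*)$ and comparing term by term with $(H_\varepsilon^{op}(g))^*$ finishes (i).

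For (ii), I would write $H_\varepsilon(\mr g) = H_\varepsilon(g) - H_\varepsilon(E g)$. Since $Eg\in\m B=\m W_\emptyset$, all $L_k$ for $k\geq 1$ kill it, and $L_0(Eg)=E(Eg)=Eg$, so $H_\varepsilon(Eg)=\varepsilon_0 Eg$. On the other hand, $\bmr{H_\varepsilon(g)}=H_\varepsilon(g)-E(H_\varepsilon(g))$, and one only needs to check that $E(\varepsilon_k L_k(g))=0$ for $k\geq 1$, while $E(\varepsilon_0 Eg)=\varepsilon_0 Eg$. The first holds because $\varepsilon_k\in\m B$ acts by left $\m B$-module multiplication on the first factor of a reduced tensor, keeping it centered (since $\mathring{\m A}_{i_1}$ is a left $\m B$-submodule), so $\varepsilon_k L_k(g)$ stays in the orthogonal complement of $\m B$. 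Thus $E(H_\varepsilon g)=\varepsilon_0 Eg = H_\varepsilon(Eg)$, yielding (ii).

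For (iii), the main obstacle (which is actually mild) is bookkeeping on elementary tensors. On $g\in\m W_{\un i}$ with $n\geq 1$, $L_k(g)=\delta_{k,i_1}g$ and $R_k(g)=\delta_{k,i_n}g$, so
\[
H_\varepsilon(g)=\varepsilon_{i_1}\, g,\qquad H_{\varepsilon'}^{op}(g)=g\,(\varepsilon'_{i_n})^*.
\]
Because $\varepsilon_{i_1}\in\m B$ absorbs into the leftmost factor and $(\varepsilon'_{i_n})^*\in\m B$ absorbs into the rightmost factor without changing the indices $i_1$ or $i_n$ of the reduced tensor (and without removing the centering, since $\mathring{\m A}_{i_j}$ is a $\m B$-bimodule), we get
\[
H_\varepsilon H_{\varepsilon'}^{op}(g)=\varepsilon_{i_1}\, g\,(\varepsilon'_{i_n})^*=H_{\varepsilon'}^{op}H_\varepsilon(g).
\]
The case $n=0$, i.e.\ $g\in\m B$, reduces to $\varepsilon_0 g(\varepsilon'_0)^*$ on both sides by the convention $L_0=R_0=E$. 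Extending linearly over $\m W$ gives (iii).
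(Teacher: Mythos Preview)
Your proof is correct and follows exactly the approach indicated in the paper, which simply says ``This is clear on elementary tensors.'' You have spelled out the verification on $\m W_{\un i}$ in careful detail, including the key facts that $\mathring{\m A}_{i_j}$ is a $\m B$-bimodule and that adjoint swaps first and last indices; the only superfluous remark is the mention of density, since the statement is asserted only for $g\in\m W$ and linearity suffices.
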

\begin{proof}
This is clear on elementary tensors.
\end{proof}

We now give the free version of Cotlar's identity.
\begin{prop}\label{ghid}   For elementary tensors $g,h \in \m W,$
\begin{enumerate}[(i)]
\setcounter{enumi}{3}
      \item  $\bmr{H_\varepsilon(g^*h)}=\bmr{H_\varepsilon(g^*)h}$ if 
$g\nprec^L h$
\item  $\bmr{H_\varepsilon^{op}(g^*h)}=\bmr{g^*H_\varepsilon^{op}(h)}$ if 
$h\nprec^R g$. 

And for any $g,h \in \m W,$
  \item  $\bmr{H_\varepsilon(g^*)H_{\varepsilon'}^{op}(h)}=\bmr{H_\varepsilon(g^*H_{\varepsilon'}^{op}(h))}+\bmr{H_{\varepsilon'}^{op}(H_\varepsilon(g^*)h)}-\bmr{H^{op}_{\varepsilon'} H_\varepsilon(g^*h)}$.
\end{enumerate}
\end{prop}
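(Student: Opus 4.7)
I would argue on elementary tensors $g\in\m W_{\un i}$ and $h\in\m W_{\un j}$, with $\un i=(i_1,\dots,i_n)$ and $\un j=(j_1,\dots,j_m)$. On such tensors the operators act by very simple left/right $\B$-multiplications: $H_\varepsilon(g^*)=\varepsilon_{i_n}g^*$ and $H_{\varepsilon'}^{op}(h)=h(\varepsilon'_{j_m})^*$ when $n,m\geq 1$ (with $\varepsilon_0$, $\varepsilon'_0$ stepping in when one of $n,m$ is zero). Thus everything reduces to a careful expansion of $g^*h$ in the amalgamated free product.

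To expand $g^*h$, one successively reduces the middle pair $\mathring a_{i_1}^*\mathring b_{j_1}$ via $ab=E(ab)+\bmr{ab}$ whenever $i_1=j_1$, and iterates as long as the prefixes of $\un i$ and $\un j$ keep matching. Let $k_{\max}$ denote the largest $k\leq\min(n,m)$ with $i_\ell=j_\ell$ for all $\ell\leq k$, and write $g^*h=s+\sum_{\un k}x_{\un k}$ with $s\in\B$ and $x_{\un k}\in\m W_{\un k}$ nonzero. The combinatorial heart of the proof is the following dichotomy for each non-scalar component: its first letter $k^{(1)}$ equals $i_n$ unless the cancellations have consumed all of $g^*$ (forcing $k_{\max}=n$), and its last letter equals $j_m$ unless they have consumed all of $h$ (forcing $k_{\max}=m$). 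These two pathologies cannot coexist on a non-scalar term, because $k_{\max}=n=m$ collapses the whole product into $\B$ and contributes only to $s$. Hence each $x_{\un k}$ satisfies $k^{(1)}=i_n$ or $k^{(\mathrm{last})}=j_m$.

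With this in hand, (iv) follows: the hypothesis $g\nprec^L h$ forbids $k_{\max}=n<m$ (the only way the first pathology occurs with a non-scalar remainder), so every non-scalar $x_{\un k}$ has $k^{(1)}=i_n$ and
\[
\bmr{H_\varepsilon(g^*h)}=\sum_{\un k}\varepsilon_{k^{(1)}}x_{\un k}=\varepsilon_{i_n}\sum_{\un k}x_{\un k}=\bmr{H_\varepsilon(g^*)\,h}.
\]
Statement (v) is the mirror image, provable by the same argument applied to last letters (or by transposing (iv) via Proposition \ref{ghid1}(i)). For (vi) no hypothesis is needed: setting $\alpha=\varepsilon_{i_n}$ and $\beta=(\varepsilon'_{j_m})^*$, a direct expansion of the four bracketed quantities reveals
\[
\bmr{H_\varepsilon(g^*)H_{\varepsilon'}^{op}(h)}-\bmr{H_\varepsilon(g^*H_{\varepsilon'}^{op}(h))}-\bmr{H_{\varepsilon'}^{op}(H_\varepsilon(g^*)h)}+\bmr{H_{\varepsilon'}^{op}H_\varepsilon(g^*h)}=\sum_{\un k}(\alpha-\varepsilon_{k^{(1)}})\,x_{\un k}\,(\beta-(\varepsilon'_{k^{(\mathrm{last})}})^*),
\]
and the dichotomy kills each summand, because at least one of the two parenthesized factors vanishes.

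The hard part of all of this is not the algebra but justifying the dichotomy cleanly: one must track how the scalar residues $E(\mathring a_{i_\ell}^*\mathring b_{j_\ell})$ produced by partial reductions are absorbed into neighbouring tensor factors through the $\B$-bimodule structure without altering the first or last letter of the surviving component, and one must handle the degenerate cases $n=0$, $m=0$, and $\un i=\un j$ (where the whole product is a scalar). Once that bookkeeping is set up, both (iv)--(v) and the Cotlar-type identity (vi) are formal consequences of the dichotomy.
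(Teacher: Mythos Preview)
Your approach is correct and genuinely different from the paper's. The paper proves (iv) by a short induction on $n+m$: when $i_1=j_1$ it splits $g_1^*h_1=\bmr{g_1^*h_1}+E(g_1^*h_1)$ and applies the inductive hypothesis to the two shorter products that result. For (vi) the paper does \emph{not} expand $g^*h$ at all; instead it observes that either $g\nprec^L h$ or $h\nprec^R g$ must hold, and in each case (iv) or (v) identifies two pairs among the four bracketed terms, so the identity collapses. Your route replaces this induction-plus-case-split by one global combinatorial fact: in the full expansion $g^*h=s+\sum_{\un k}x_{\un k}$, every non-scalar piece keeps $i_n$ as first letter or $j_m$ as last letter. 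The point that makes this clean (and that you should state explicitly) is that the residues $r_\ell\in\B$ are absorbed into an adjacent \emph{centered} factor and stay centered because $E$ is a $\B$-bimodule map, so they never propagate past the middle and the outer letters are untouched; the single exceptional term is the final residue when $k_{\max}=n<m$ (resp.\ $k_{\max}=m<n$), which loses the first (resp.\ last) letter but keeps the other.

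What each buys: the paper's induction is shorter and avoids writing out the expansion of $g^*h$, but it proves (vi) only through (iv)--(v) and a case analysis. Your dichotomy costs more bookkeeping upfront, yet your factorisation of the Cotlar discrepancy as $\sum_{\un k}(\alpha-\varepsilon_{k^{(1)}})\,x_{\un k}\,(\beta-(\varepsilon'_{k^{(\mathrm{last})}})^*)$ is a genuinely nicer way to see (vi): it handles all cases at once and makes transparent why no hypothesis on $g,h$ is needed there.
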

\begin{proof}
 Let $g=g_1\otimes ...\otimes g_n\in \m W_{\un i}$ and $h=h_1\otimes
 ...\otimes h_m\in \m W_{\un j}$ with $\un i=(i_1,...,i_n)$ and $\un
 j=(i_1,...,j_m)$, $n,m\geq 0$. We start by proving $(iv)$ by
 induction on $n+m$.

If $n+m=0$, this is clear as $\bmr{H_\varepsilon(g^*h)}=\bmr{H_\varepsilon(g^*)h}=0.$

Assume $n+m\geq 1$ and $g\nprec^L h$. Note that necessarily $n\geq 1$.\\ 
First case: $i_1\neq j_1$ or $m=0$, then 
$$g^*h= g_n^* \otimes ...\otimes g_2^* \otimes g_1^*\otimes h_1\otimes
h_2 \otimes...\otimes h_m,$$
and $H_\varepsilon(g^*h)=H_\varepsilon(g^*)h=\varepsilon_{i_n} g^*h$.\\
Second case: $i_1= j_1$,
$$g^*h= g_n^* \otimes ...\otimes g_2^* \otimes (\bmr{g_1^*h_1})\otimes
h_2 \otimes...\otimes h_m+ (g_n^* \otimes ...\otimes g_2^*).(
(E{g_1^*h_1}) h_2 \otimes...\otimes h_m)$$ Put $\tilde
g=\bmr{h_1^*g_1}\otimes...\otimes g_n$, $\tilde h=h_2\otimes...\otimes
h_m$ and $\hat g=g_2\otimes...\otimes g_n$, $\hat h=(E{g_1^*h_1})
h_2\otimes...\otimes h_m$ (if $n=1$, $\hat g=1$) . Note that $\tilde g
\nprec^L \tilde h$ (or $\tilde g=0$) and $\hat g \nprec^L \hat h$ and the sum of their
length is strictly smaller than $n+m$. We can apply the formula to
them to get $\bmr{H_\varepsilon({\tilde g}^*\tilde
  h)}=\bmr{H_\varepsilon(\tilde g^*)\tilde
  h}=\varepsilon_{i_n}{{\tilde g}^*\tilde h}$ and
$\bmr{H_\varepsilon({\hat g}^*\hat h)}=\bmr{H_\varepsilon(\hat
  g^*)\hat h}=\varepsilon_{i_n}\bmr{{\hat g}^*\hat h}$ (this holds if
$n=1$ because then $m=1$ and $\bmr{{\hat g}^*\hat h}=0$). Finally
$$\bmr{H_\varepsilon(g^*h)}=\varepsilon_{i_n}({{\tilde g}^*\tilde h} +\bmr{{\hat g}^*\hat h})=\varepsilon_{i_n} \bmr{g^*h}=\bmr{H_\varepsilon(g^*)h}.$$

$(v)$ follows from $(iv)$ by taking adjoints.

To get $(vi)$ it suffices to do it for elementary tensors by linearity. Assume first that $g\nprec^Lh $, then obviously 
$g\nprec^L H_{\varepsilon'}^{op}(h)$, so by $(iv)$
$$\bmr{H_{\varepsilon}(g^*H_{\varepsilon'}^{op}(h))}=\bmr{H_\varepsilon(g^*)H_{\varepsilon'}^{op}(h)}, \qquad\bmr{H_\varepsilon(g^*)h}=\bmr{H_\varepsilon(g^*h)}.$$
Since the centering operation commutes with $H_{\varepsilon'}^{op}$ by Proposition 
\ref{ghid1}, we get $(vi)$.

If $g\prec^Lh$ then $h\nprec^R g$ and we can use $(v)$ and Proposition \ref{ghid1} $(ii)$ as above and $(iii)$ to get $(vi)$ as
$$\bmr{H_{\varepsilon'}^{op}(H_\varepsilon(g^*)h)}=\bmr{H_\varepsilon(g^*)H_{\varepsilon'}^{op}(h)}, \,
\bmr{H_{\varepsilon'}^{op}H_\varepsilon(g^*h)}=\bmr{H_\varepsilon H_{\varepsilon'}^{op}(g^*h)}=\bmr{H_\varepsilon(g^*H_{\varepsilon'}^{op}(h))}.$$  
\end{proof}
\begin{rem}
Removing the centering, we have obtained a Cotlar's formula  for $x=\sum_i g_i, y=\sum_j h_j, g_i,h_j\in \m W$
as follows,
\begin{eqnarray}
 && H_\varepsilon x(H_{\varepsilon'} y)^* -E[(H_{\eps_0}x-\eps_0 x)(H_{\varepsilon'} y-\eps'_0y)^*]\nonumber\\&=& H_\varepsilon
  (xH^{op}_{\varepsilon'}(y^*))+ H_{\varepsilon'}
  ^{op}(H_\varepsilon (x)y^*) - H^{op}_{\varepsilon'} H_\varepsilon
  (xy^*). \label{cotlar}
\end{eqnarray}
Note the  justified Cotlar's identity (\ref{cotlar}) holds for  all $\|\varepsilon_k\|\leq 1$ while in the commutative setting the    Cotlar's formula holds for
 $\varepsilon_k=\pm$ only.
\end{rem}

\begin{prop}\label{diag}
For any $x \in \m W,$ and any $p\geq 1$, and $\varepsilon_k\in {\m Z}( \B), \|\varepsilon_k\|\leq 1$
\begin{eqnarray*}\max\{ \|  E(H_\varepsilon x(H_\varepsilon x)^*)\|_p,
\|E(H_\varepsilon  (xH_\varepsilon ^{op}(x^*)))\|_p, \\ \|E(H_\varepsilon
  ^{op}(H_\varepsilon (x)x^*))\|_p,\|E(H^{op}_\varepsilon H_\varepsilon
  (xx^*))\|_p\}&\leq& \|E(xx^*)\|_p. \end{eqnarray*}
\end{prop}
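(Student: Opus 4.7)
The plan is to express each of the four quantities in terms of the diagonal $\B$-valued blocks $A_k := E(P_k(x) P_k(x)^*) \in \B^+$, where $P_0 = E$ and $P_k = L_k$ for $k \geq 1$, and then bound them via either an operator inequality or a short Hölder estimate.

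The first step is the standard block-orthogonality for an amalgamated free product: the subspaces $\B, \m L_1, \m L_2, \ldots$ of $L^2(\A)$ are pairwise orthogonal as Hilbert $\B$-modules, i.e.\ $E(P_k(x) P_l(x)^*) = 0$ whenever $k \neq l$. Indeed, distinct $\m W_{\un i}$'s are $E$-orthogonal (reducing $gh^*$ for $g\in\m W_{\un i}$, $h\in \m W_{\un j}$ with $\un i\neq\un j$ cannot collapse to a $\B$-scalar, since that would require full pairwise merging of the letters), and the $\m L_k$'s are disjoint unions of $\m W_{\un i}$'s. Summing the diagonal blocks gives $E(xx^*)=\sum_k A_k$.

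Using $E\circ L_k=0$ for $k\geq 1$ (so that $E\circ H_\varepsilon = \varepsilon_0 E$, and similarly $E\circ H_\varepsilon^{op}$ acts by right multiplication by $\varepsilon_0^*$ after $E$), together with the identity $R_l(x^*)=(L_l x)^*$, a direct expansion using the block-orthogonality yields
\begin{align*}
E(H_\varepsilon(x) H_\varepsilon(x)^*) &= \sum_k \varepsilon_k A_k \varepsilon_k^*,
&
E(H_\varepsilon^{op} H_\varepsilon(xx^*)) &= \varepsilon_0 E(xx^*) \varepsilon_0^*, \\
E(H_\varepsilon(x H_\varepsilon^{op}(x^*))) &= \varepsilon_0 \sum_k A_k \varepsilon_k^*,
&
E(H_\varepsilon^{op}(H_\varepsilon(x) x^*)) &= \sum_k \varepsilon_k A_k \varepsilon_0^*.
\end{align*}
Since $\varepsilon_k\in \m Z(\B)$ commutes with $A_k\in \B$, each right-hand side rewrites as $\sum_k u_k A_k$ with $u_k\in \m Z(\B)$ and $\|u_k\|\leq 1$.

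For the first and fourth expressions the coefficient $u_k$ (namely $\varepsilon_k\varepsilon_k^*$ or $\delta_{k,0}\varepsilon_0\varepsilon_0^*$) is a positive central contraction, so $\sum_k u_k A_k\leq \sum_k A_k=E(xx^*)$ as positive operators and the $L^p$-bound is immediate. The remaining two require the inequality
\[ \Bigl\|\sum_k u_k A_k\Bigr\|_p \leq \Bigl\|\sum_k A_k\Bigr\|_p, \qquad A_k\in \B^+,\ u_k\in \m Z(\B),\ \|u_k\|\leq 1, \]
which I would prove by Hölder: set $B=(A_k^{1/2})_k\in M_{1,\infty}(\B)$ and $D=\mathrm{diag}(u_k)\in M_\infty(\m Z(\B))$. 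Centrality of $u_k$ gives $BDB^*=\sum_k u_k A_k$, while $BB^*=\sum_k A_k$, so tracial Hölder yields $\|BDB^*\|_p\leq \|B\|_{2p}\|D\|_\infty \|B^*\|_{2p}=\|BB^*\|_p$. The main work is the block-orthogonality computation; after that, the estimates are algebraic and the Hölder step handles the non-self-adjoint cases.
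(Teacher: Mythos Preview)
Your proof is correct and follows essentially the same route as the paper's: both reduce via $\B$-orthogonality of the blocks to an expression of the form $\sum a_k u_k a_k$ with $a_k=A_k^{1/2}$ positive and $u_k$ a central contraction, and then conclude by H\"older (the paper decomposes over the full multi-index $\un i$ rather than just the first letter $k$, but since $H_\varepsilon$ only sees the first letter this makes no difference). One harmless slip: for the fourth term $E(H^{op}_\varepsilon H_\varepsilon(xx^*))=\varepsilon_0\varepsilon_0^*\sum_k A_k$ the coefficient is $u_k=\varepsilon_0\varepsilon_0^*$ for \emph{all} $k$, not $\delta_{k,0}\varepsilon_0\varepsilon_0^*$; it is still a positive central contraction, so your operator-inequality argument goes through unchanged.
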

\begin{proof}
Write $g=\sum_{\un i} g_{\un i}$ with $g_{\un i}\in \m W_{\un
  i}$. Then, by orthogonality of the $\m W_{\un i}$ over $\B$, all the
4 elements on the left hand side are of the form $\sum_{\un i} y_{\un
  i}E( g_{\un i}g_{\un i}^*) z_{\un i}^*$ with $y_{\un i},z_{\un
  i}\in \{1, \varepsilon_{i_n}\}$. But $\sum_{\un i}
y_{\un i}E( g_{\un i}g_{\un i}^*) z_i^*=\sum_{\un i} a_{\un i} y_{\un
  i}z_{\un i}^* a_{\un i}$ with $a_{\un i}=E( g_{\un i}g_{\un
  i}^*)^{1/2}$ so that the inequality follows by the H\"older
inequality as $\sum_{\un i} a_{\un i}^2=E(xx^*)$.
\end{proof}

 We can prove the main result

 \begin{thm}
\label{main}
 For  $1<p<\infty$, there is a constant $c_p$ so that for ${\varepsilon_k\in {\m Z}( \B)}, \|\varepsilon_k\|\leq 1$ and $x\in \m W$
\begin{eqnarray}\label{key1}  \|H_\varepsilon  x\|_{p}\leq { c_{p}}  \| x\|_{p}, \qquad \|H_\varepsilon^{op}  x\|_{p}\leq {c_{p}}  \| x\|_{p} .\end{eqnarray}

Moreover the equivalence holds with constant $c_p$ in both directions if $\varepsilon_k$'s are further assumed to be
unitaries.
\end{thm}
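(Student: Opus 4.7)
The plan is a Cotlar-style bootstrap in $p$, starting from trivial $L^2$-contractivity and doubling via the free Cotlar identity (\ref{cotlar}) and Proposition \ref{diag}, then filling in the rest of $(1,\infty)$ by interpolation and duality. Throughout let
\[ c_p := \sup\{\|H_\varepsilon\|_{L^p\to L^p}, \|H_\varepsilon^{op}\|_{L^p\to L^p}: \varepsilon_k\in \m Z(\B),\,\|\varepsilon_k\|\leq 1\}. \]
By Proposition \ref{ghid1}(i), $\|H_\varepsilon^{op}(x)\|_p=\|H_\varepsilon(x^*)\|_p$, so the two suprema coincide and it suffices to bound one of them. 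On $L^2(\m A)$ the decomposition $\m W=\B\oplus\bigoplus_{\un i}\m W_{\un i}$ is orthogonal and $H_\varepsilon$ acts diagonally by left multiplication by $\varepsilon_{i_1}$ (or by $\varepsilon_0$ on $\B$), with $\|\varepsilon_k\|\leq 1$ central, so $c_2\leq 1$.

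For the doubling step, fix $p\geq 2$ with $c_p<\infty$ and apply (\ref{cotlar}) with $y=x$ and $\varepsilon'=\varepsilon$. Taking $L^p$-norms and using $\|H_\varepsilon x\|_{2p}^2=\|H_\varepsilon x(H_\varepsilon x)^*\|_p$ on the left, I would bound each term on the right as follows: H\"older's inequality combined with Proposition \ref{ghid1}(i) (which gives $\|H_\varepsilon^{op}(x^*)\|_{2p}=\|H_\varepsilon x\|_{2p}$) and the inductive hypothesis control $\|H_\varepsilon(xH_\varepsilon^{op}(x^*))\|_p$ and $\|H_\varepsilon^{op}(H_\varepsilon(x)x^*)\|_p$ each by $c_p\|x\|_{2p}\|H_\varepsilon x\|_{2p}$; the iterated term $\|H_\varepsilon^{op}H_\varepsilon(xx^*)\|_p$ is at most $c_p^2\|x\|_{2p}^2$; and the $E[\cdots]$ term, after rewriting $H_\varepsilon x-\varepsilon_0 x=\sum_k(\varepsilon_k-\varepsilon_0)L_k(x)$ and using orthogonality of the $\m W_{\un i}$ over $\B$ together with the centrality of the $\varepsilon_k$ (exactly as in the proof of Proposition \ref{diag}), is at most $4\|E(xx^*)\|_p\leq 4\|x\|_{2p}^2$. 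Setting $t=\|H_\varepsilon x\|_{2p}/\|x\|_{2p}$, this yields the quadratic $t^2\leq 2c_p t+(4+c_p^2)$, hence $c_{2p}\leq c_p+\sqrt{2c_p^2+4}\leq (1+\sqrt{2})\,c_p+2$.

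Iterating from $c_2\leq 1$ gives a finite $c_{2^n}$ for each $n\geq 1$ with at most geometric growth in $n$, and complex interpolation of $H_\varepsilon$ between $L^2$ and $L^{2^n}$ then covers all $r\in[2,\infty)$. To reach $1<r<2$, I would check by a direct computation on elementary tensors, using trace cyclicity and the centrality of the $\varepsilon_k$, that $\tau(H_\varepsilon(x)y)=\tau(xH_{\varepsilon^*}^{op}(y))$; the tracial adjoint of $H_\varepsilon$ is therefore $H_{\varepsilon^*}^{op}$, and since $\|\varepsilon_k^*\|\leq 1$ the $L^{r'}$-bound already obtained for $H_{\varepsilon^*}^{op}$ transfers to the required $L^r$-bound for $H_\varepsilon$. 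Finally, in the unitary case, $\eta_k:=\varepsilon_k^*$ again satisfies $\|\eta_k\|\leq 1$ and $H_\eta\circ H_\varepsilon=\mathrm{id}$ on $\m W$, so $\|x\|_p\leq c_p\|H_\varepsilon x\|_p$ follows at once from the already-established bound applied to $\eta$.

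The main obstacle I anticipate is the careful treatment of the conditional-expectation term in the Cotlar identity: the formula in (\ref{cotlar}) uses $(H_\varepsilon x-\varepsilon_0 x)$ rather than $H_\varepsilon x$, and I need to verify that after this subtraction one is left with $\sum_k(\varepsilon_k-\varepsilon_0)L_k(x)$ so that the orthogonality-plus-centrality argument underlying Proposition \ref{diag} applies with only an absolute constant lost. Once this is in hand, the quadratic bootstrap, the interpolation step, and the duality argument are fairly standard.
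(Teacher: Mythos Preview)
Your proposal is correct and follows essentially the same route as the paper: the Cotlar identity (\ref{cotlar}) with $y=x,\ \varepsilon'=\varepsilon$, H\"older plus Proposition~\ref{diag} to control the conditional-expectation contributions, the resulting quadratic $t^2\leq 2c_p t+(4+c_p^2)$ giving $c_{2p}\leq c_p+\sqrt{2c_p^2+4}$, then induction from $c_2\leq 1$, complex interpolation, and duality via $(H_\varepsilon)^*=H_{\varepsilon^*}^{op}$; the unitary equivalence follows from $H_{\varepsilon^*}H_\varepsilon=\mathrm{id}$. The only cosmetic difference is that the paper passes through the centered identity (\ref{<1}) and invokes Proposition~\ref{diag} for four separate $E$-terms (each with constant~$1$), whereas you work directly with (\ref{cotlar}) and bound the single term $E[(H_\varepsilon x-\varepsilon_0 x)(H_\varepsilon x-\varepsilon_0 x)^*]$ by $4\,E(xx^*)$ via the same orthogonality--centrality argument; both lead to the identical quadratic inequality.
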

\begin{proof} Assume $\|H_\varepsilon \|_{L^p(\A)\rightarrow L^p(\A)}\leq c_p$.
 We will show that $ \|H_\varepsilon x\|_{2p }\leq (c_p+\sqrt
 {2c^2_{p}+4}) \| x\|_{2p}$ for all $x\in \m W$, and similarly for $H^{op}_\varepsilon$ using the $*$-operation. Once this is
 proved, we get the upper desired estimate  for all
 $p=2^n, n\in\N$, by induction and the fact that $\|H_\varepsilon
 x\|_{2 }= \|H_\varepsilon^{op}
 x\|_{2 }=\|x\|_{2} $. Applying interpolation and
 duality, we then get the result for all $1<p<\infty$ 
(note that the adjoint of $H_\varepsilon$ is $H_{\varepsilon^*}^{op}$). The equivalence holds for unitary $\varepsilon$ since
 $H_\varepsilon H_{\varepsilon^*}=id_\A$ in this case.  In fact, Cotlar's formula \eqref{cotlar}
 implies that for $x,\, y\in \m W$
\begin{eqnarray}
\bmr{H_\varepsilon x(H_{\varepsilon'} y)^*}  &=& \bmr{H_\varepsilon
  (xH_{\varepsilon'} ^{op}(y^*))}+\bmr{H_{\varepsilon'}  ^{op}(H_\varepsilon (x)y^*)}-\bmr{H^{op}_{\varepsilon'} H_\varepsilon
  (xy^*)}.\label{<1}
\end{eqnarray}

Apply H\"older's inequality and Proposition \ref{diag} to this identity for $x=y, \eps=\eps'$, we get
\begin{eqnarray*}
\| H_\varepsilon x\|_{{2p}}^2&\leq&2 c_{p}\|x \|_{2p}\| H_\varepsilon x\|_{2p}+ (4+c^2_{p})\|x \|^2_{2p}.
\end{eqnarray*}
That is $\|H_\varepsilon x\|_{{2p}}\leq (c_p+\sqrt  {2c^2_{p}+4})\| x\|_{{2p}}$.
 \end{proof}

\begin{rem}
As $\prod_{n=0}^\infty \frac {1+\sqrt{2+4/c_{2^n}^2}}{1+\sqrt{2}}<\infty$, one gets that for $p\geq 2$, $c_p\leq C p^\gamma$ with $\gamma= \frac {\ln (1+\sqrt2)}{\ln 2}$.
\end{rem}

 
\begin{rem}
By the usual trick to replace $\B,\, \A_k$ by $\B\otimes M_n$ and
$\A_k\otimes M_n$, one get that the maps $H_\varepsilon$ are completely bounded 
on $L^p$ for $1<p<\infty$.
\end{rem}

\begin{rem}\label{genM} We can use a slighter general definition for $H_\varepsilon$ by taking
$\varepsilon_k\in \B\otimes \m M$ where $\m M$ is a finite von Neumann
algebra, then $E(x)$ and $L_k(x)$ have to be understood as
$E(x)\otimes 1$ and $L_k(x)\otimes 1$. Theorem \ref{main}
 remains valid  with the assumption that $\varepsilon\in \m Z(\B)\otimes \m M$.  
\end{rem}


\subsection{Corollaries}

 In this section, we derive a few direct consequences of Theorem \ref{main}.

For any $k_0\in \N$, let $\varepsilon_{k_0}=-1$ and $\varepsilon_k=1$ for $k\neq k_0$. Then $  L_{k_0}=\frac {id_\A-H_\varepsilon}2$.
\begin{corollary}
\label{cor1}
For any $1<p<\infty$,  
 \[  \| L_{k}x\|_{p}\leq \frac {1+c_p}2 \| x\|_{p}.\]
\end{corollary}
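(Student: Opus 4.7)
The plan is to use the hint provided just before the corollary, namely the identity $L_{k_0} = \frac{id_{\mathcal{A}} - H_\varepsilon}{2}$ for the specific choice $\varepsilon_{k_0} = -1$ and $\varepsilon_k = 1$ otherwise. This reduces the boundedness of the projection $L_{k_0}$ directly to the boundedness of $H_\varepsilon$ established in Theorem \ref{main}.

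First I would verify the decomposition $id_{\mathcal{A}} = E + \sum_{k \in \mathbb{N}} L_k$ on $\mathcal{W}$. This follows from the definition of $\mathcal{W}$ as the orthogonal direct sum (over $\mathcal{B}$) of $\mathcal{B}$ together with the subspaces $\mathcal{W}_{\underline{i}}$, which decompose according to the first index $i_1$: each nontrivial reduced word lies in exactly one $\mathcal{L}_k$. Consequently, with $\varepsilon_0 = 1$, $\varepsilon_k = 1$ for $k \neq k_0$, and $\varepsilon_{k_0} = -1$, the definition $H_\varepsilon = \varepsilon_0 E + \sum_k \varepsilon_k L_k$ gives $H_\varepsilon = E + \sum_{k \neq k_0} L_k - L_{k_0} = id_{\mathcal{A}} - 2 L_{k_0}$, whence $L_{k_0} = \frac{1}{2}(id_{\mathcal{A}} - H_\varepsilon)$.

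Since the chosen $\varepsilon_k$'s are scalars $\pm 1$, they certainly lie in $\mathcal{Z}(\mathcal{B})$ with $\|\varepsilon_k\| = 1$, so Theorem \ref{main} applies and yields $\|H_\varepsilon x\|_p \leq c_p \|x\|_p$ for $x \in \mathcal{W}$. The triangle inequality then gives
\[
\|L_{k_0} x\|_p = \tfrac{1}{2} \|x - H_\varepsilon x\|_p \leq \tfrac{1}{2}\bigl(\|x\|_p + c_p \|x\|_p\bigr) = \tfrac{1+c_p}{2}\|x\|_p,
\]
as claimed. Since $\mathcal{W}$ is dense in $L^p(\mathcal{A})$, this extends to all of $L^p(\mathcal{A})$.

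There is no real obstacle here; the only point worth double-checking is the sign conventions and the fact that $\varepsilon_k \in \{\pm 1\}$ qualifies as an admissible choice for Theorem \ref{main} (it does, trivially). The corollary is essentially a one-line consequence of Theorem \ref{main}.
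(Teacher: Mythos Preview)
Your proof is correct and follows exactly the approach the paper intends: the identity $L_{k_0}=\frac{id_\A-H_\varepsilon}{2}$ is stated just before the corollary, and the bound follows immediately from Theorem \ref{main} and the triangle inequality. The paper gives no further argument beyond this, so your write-up is a faithful expansion of what is implicit there.
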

 
\begin{corollary}
\label{corgen}
 For  $1<p<\infty$,  we have
\begin{eqnarray} 
\| (L_kx)_{k=0}^\infty  \|_{L^p(\ell_2^{cr})}\simeq^  { \sqrt2c_{p}}  \| x\|_{p}, \label{corgenLk} \\
\|  (R_kx) _{k=0}^\infty\|_{L^p(\ell_2^{cr})}\simeq^  { \sqrt2c_{p}}  \| x\|_{p}.
\end{eqnarray}
\end{corollary}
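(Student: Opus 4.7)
The plan is to randomise the multipliers $\varepsilon_k$ appearing in $H_\varepsilon$ and to combine Theorem \ref{main} with the non-commutative Khintchine inequality of Lemma \ref{khin}. Pick $(\eta_k)_{k\ge 0}$ to be an orthonormal system of unitary scalars realising the best Khintchine constants, for instance a Rademacher sequence on a probability space or the family $z^{2^k}$ on $\T$. For each realisation $\omega$ the signs $\eta_k(\omega)\in\{\pm 1\}$ lie in the centre $\m Z(\B)$, so Theorem \ref{main} applies pointwise and, after integrating in $\omega$, yields
\[ c_p^{-1}\|x\|_p \;\le\; E_\eta\|H_\eta x\|_p \;\le\; c_p\|x\|_p. \]

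Next I apply Lemma \ref{khin} to the sequence $(L_k x)_{k\ge 0}\subset L^p(\A)$, with the convention $L_0=E$. Since $H_\eta x=\sum_{k\ge 0}\eta_k L_k(x)$, the Khintchine inequality reads
\[ \alpha_p\, E_\eta\|H_\eta x\|_p \;\le\; \|(L_k x)_{k\ge 0}\|_{L^p(\A,\ell_2^{cr})} \;\le\; \beta_p\, E_\eta\|H_\eta x\|_p. \]
Chaining the two displays produces the equivalence (\ref{corgenLk}). For the chosen orthonormal family the best constants are $\beta_p=1$ on $[2,\infty)$ and $\beta_1=\sqrt 2$; duality between $L^p(\A,\ell_2^{cr})$ and $L^q(\A^{op},\ell_2^{cr})$ (which respects the $\ell_2^{cr}$ structure) handles the range $1<p<2$ and produces the advertised constant $\sqrt 2\, c_p$. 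The analogous equivalence for $(R_k x)$ follows by running the same argument with $H^{op}_\varepsilon$ in place of $H_\varepsilon$, which Theorem \ref{main} bounds simultaneously.

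There is no real obstacle here: once Theorem \ref{main} and Lemma \ref{khin} are in hand, this corollary is a direct combination of the two. The only mildly delicate point is tracking the constant $\sqrt 2$; one must choose the randomising system so that $\beta_p=1$ for $p\ge 2$ and pass to the range $1<p<2$ by duality rather than interpolation, otherwise a slightly larger intermediate constant would appear.
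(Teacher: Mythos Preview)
Your proof is correct and follows essentially the same route as the paper: randomise the coefficients in $H_\varepsilon$, apply Theorem \ref{main} and the noncommutative Khintchine inequality (\ref{khin}) with the system $z^{2^k}$, and handle the remaining range of $p$ by duality. The only cosmetic difference is that the paper reduces first to $1<p<2$ (where $\alpha_p=1$ and $\beta_p\le\sqrt2$) and then invokes duality for $p\ge 2$, whereas you start at $p\ge 2$; the argument and the resulting constant $\sqrt2\,c_p$ are the same.
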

\begin{proof}
By duality we may only consider $1<p<2$. For any $x\in L^p$
$$\frac 1{c_p} E_\eps\| H_\eps(x)\|_p  \leq \|x\|_p\leq c_p E_\eps\| H_\eps(x)\|_p=c_p  E_\eps\|\sum_k \varepsilon_k L_kx\|_p.$$
We conclude by the non commutative Khintchine inequality (\ref{khin}) for $\varepsilon_k=z^{2^k}$.
\end{proof}

\begin{rem} We will prove a variant of   Corollary \ref{corgen} in the next subsection as Theorem \ref{improved}. 
\end{rem}
\begin{corollary}
  \label{JPXf}
For any $1<p<\infty$, any sequences  $(i_k)\in \N^{\N}$ and $(x_k)\in  L^p(\ell_2^c)$, we have 
 \begin{eqnarray}
  \|(\sum_{k=1}^{\infty }|L_{i_k}x_k|^2)^{\frac12}\|_{p}  \leq {c_{p}} \| (\sum_{k=1}^\infty|x_k|^2)^\frac12\|_{{p}} \label{klcf}\\
  \|(\sum_{k=1}^{\infty }|R_{i_k}x_k|^2)^{\frac12}\|_{{p}}  \leq c_{p} \| (\sum_{k=1}^\infty|x_k|^2)^\frac12\|_{{p}}. \label{krcf}
  \end{eqnarray}
\end{corollary}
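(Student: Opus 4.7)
The plan is to deduce both inequalities from the complete boundedness of $H_\varepsilon$ (Theorem~\ref{main} together with Remark~3.6) via a column amplification, an averaging identity, and a positivity drop, with a duality step to handle $1<p<2$.

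First I would recast the column norms as $L^p$-norms on an amplified algebra. Setting $X=\sum_k x_k\otimes e_{k,1}\in L^p(\A\otimes B(\ell_2))$ one has $\|X\|_p=\|(\sum_k|x_k|^2)^{1/2}\|_p$, and applying $H_\varepsilon\otimes\mathrm{id}_{B(\ell_2)}$ together with the complete boundedness of $H_\varepsilon$ yields, for every scalar sign sequence $\varepsilon=(\varepsilon_n)$,
\[
\|(\sum_k|H_\varepsilon(x_k)|^2)^{1/2}\|_p \le c_p\,\|(\sum_k|x_k|^2)^{1/2}\|_p.
\]
Choosing now $(\varepsilon_n)$ to be independent Rademacher signs and using $E[\varepsilon_n\varepsilon_m]=\delta_{n,m}$, one gets the averaging identity $E_\varepsilon\sum_k|H_\varepsilon(x_k)|^2=\sum_{n,k}|L_n(x_k)|^2$. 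For $p\ge 2$ the functional $\|\cdot\|_{p/2}$ is a norm, so Jensen's inequality applied to this convex functional gives
\[
\|\sum_{n,k}|L_nx_k|^2\|_{p/2}\le E_\varepsilon\|\sum_k|H_\varepsilon(x_k)|^2\|_{p/2}\le c_p^2\|(\sum_k|x_k|^2)^{1/2}\|_p^2,
\]
that is, $\|(\sum_{n,k}|L_nx_k|^2)^{1/2}\|_p \le c_p\,\|(\sum_k|x_k|^2)^{1/2}\|_p$. Since $\sum_{n,k}|L_nx_k|^2\ge\sum_k|L_{i_k}x_k|^2$ as positive operators (simply drop the terms $n\ne i_k$), operator monotonicity of $t\mapsto t^{1/2}$ and monotonicity of $\|\cdot\|_p$ on the positive cone finish the proof of (\ref{klcf}) for $p\ge 2$.

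For $1<p<2$ I would use the duality $L^p(\A;\ell_2^c)^*=L^{p'}(\A;\ell_2^r)^{op}$ together with the self-adjointness of each $L_k$ in the tracial pairing $\langle x,y\rangle=\tau(xy)$: the column inequality for $L_{i_k}$ at $p$ is equivalent to the row inequality for $L_{i_k}$ at $p'>2$, and by the identity $(L_kz)^*=R_k(z^*)$ this in turn equals the column inequality for $R_{i_k}$ at $p'>2$. The latter is proved by the same three-step scheme with $H_\varepsilon^{op}$ in place of $H_\varepsilon$, and the symmetry $c_p=c_{p'}$ built into Theorem~\ref{main} by interpolation and duality delivers the claimed constant. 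Inequality (\ref{krcf}) follows by running the same argument with $R_k$ and $H_\varepsilon^{op}$, or by taking adjoints in (\ref{klcf}).

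The main obstacle is the $1<p<2$ range, where $\|\cdot\|_{p/2}$ is only a quasi-norm and Jensen fails directly; the duality detour through $p'>2$ and the opposite projections $R_k$ is the shortest workaround. Running the averaging at the level of $|H_\varepsilon x_k|^2$ before invoking Jensen is also what secures the clean constant $c_p$: a direct application of Corollary~\ref{corgen} amplified to $X$ would only produce $\sqrt 2\, c_p$ because of the Khintchine constants.
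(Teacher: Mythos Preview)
Your argument is correct in substance but takes a longer route than the paper's, and there is one misstatement in the duality step: $L_k$ is \emph{not} self-adjoint for the bilinear pairing $\tau(xy)$; its adjoint there is $R_k$, since $\tau(L_k(a)b)=\tau(a\,R_k(b))$. The conclusion you need survives anyway: the dual of $(x_k)\mapsto(L_{i_k}x_k)$ on $L^p(\ell_2^c)$ is $(y_k)\mapsto(R_{i_k}y_k)$ on $L^{p'}(\ell_2^r)$, and after applying $*$ (using $(R_ky)^*=L_k(y^*)$) this becomes precisely the column inequality for $L_{i_k}$ at $p'>2$, which you have already established with constant $c_{p'}=c_p$.

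The paper bypasses both the positivity drop and the case split $p\lessgtr 2$ by a single \emph{linear} averaging. The trick is to insert the random signs also on the input side: apply $H_\varepsilon\otimes\mathrm{id}$ to $X=\sum_l\varepsilon_{i_l}\,x_l\otimes c_l$ rather than to $\sum_l x_l\otimes c_l$. Then $E_\varepsilon[\varepsilon_k\varepsilon_{i_l}]=\delta_{k,i_l}$ gives
\[
E_\varepsilon\bigl(H_\varepsilon(X)\bigr)=\sum_l L_{i_l}(x_l)\otimes c_l,
\]
and the Banach-space Jensen inequality $\|E_\varepsilon Y\|_p\le E_\varepsilon\|Y\|_p$, valid for every $p\ge1$, delivers \eqref{klcf} in one stroke with constant $c_p$, since $\|X\|_p=\|(\sum_l|x_l|^2)^{1/2}\|_p$. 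Your approach averages the quadratic expression $|H_\varepsilon x_k|^2$, which forces $p\ge2$ (so that $\|\cdot\|_{p/2}$ is a norm) and overshoots to $\sum_{n,k}|L_nx_k|^2$, necessitating the term drop; the paper's linear coupling picks out exactly the diagonal $k=i_l$ and treats all $1<p<\infty$ uniformly.
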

\begin{proof}
Fix  a sequence $\varepsilon_k=\pm 1$ and apply Theorem $\ref{main}$   to
$x=\sum_l \varepsilon_{i_l} x_l \otimes c_l\in L^p(\A\otimes B(\ell_2))$. We have
$$\| \sum_{k,l} \varepsilon_k\varepsilon_{i_l} L_k(x_l) \otimes c_l\|_p \leq
c_p\|\sum_l \varepsilon_{i_l} x_l \otimes c_l\|_p=c_p\| (\sum_{l=1}^\infty|x_l|^2)^{\frac12}\|_p.$$
 Let $\varepsilon_k$ to be Rademacher variables, we have 
 $$\|(\sum_{l=1}^{\infty }|L_{i_l}x_l|^2)^{\frac12}\|_{p} =\|E \sum_{k,l} \varepsilon_k\varepsilon_{i_l} L_k(x_l) \otimes c_l \|_{p}
 \leq c_p\| (\sum_{l=1}^\infty|x_l|^2)^{\frac12}\|_p. $$

The proof of the second inequality is similar.
\end{proof}

\begin{rem} Lemma \ref{JPXf} was proved in \cite{JPX07} (Lemma 2.5, Corollary 2.9) for  $x_k$'s supported on reduced words with length $=d$ with constants depending on $d$, independent on $p$.  
\end{rem}




\subsection{ Length independent estimates for Rosenthal's inequality }
We will apply Theorem \ref{main} to obtain a length free estimate for
the Rosenthal's inequality proved in \cite{JPX07} (Theorem A).  In this
subsection, we restrict $\eps\in \{\pm1\}^\N$ and $\eps_0=0$ in the
form of $H_\eps= \sum_{k\in \N} \eps_k L_{k}$ and $H^{op}_\eps$.  When
no confusion can occur, we use the notation $T$ instead of $T \otimes
Id$ for its ampliation.

Thanks to the previous results, we can define the following paraproduct for
$x\in L^p(\A)\otimes L^p(\M)$ ($1<p<\infty$)
and $y\in L^q(\A)\otimes L^q(\M)$
with $\frac 1 p+\frac  1 q>1$
$$ x\ddag y= E_\eps H_\eps(H_\eps(x) y)=\sum_{k\in \N} L_k((L_kx) y),$$
with $E_\eps$ the expectation with respect to the Haar measure on $\{\pm1\}^\N.$
We also set
$$x\dag y= xy- x\ddag y - E(xy)=\sum_{k=0}^\infty \mathring{L^\bot_k} ((L_kx) y).$$
Here $\mathring{L^\bot_k} =\sum_{j\neq k, j\in \N} L_j $  for any $k\geq 0$.

If $x$ and $y$ are elementary tensors ($x\notin \B$), $x\ddag y$ collects in the reduced form of $xy$ all elements whose first letter is in the same algebra as $x$ while $x\dag y$ collects the rest in the reduced form of $xy$. 

\begin{prop}\label{paraid}
  We have the following, for $1<p<\infty$, $1<q\leq \infty$ with $\frac 1 p + \frac 1 q=\frac 1 r>1$
  \begin{enumerate}[i)]
  \item $\|H_\eps(x\ddag y)\|_r\leq c_rc_p \|x\|_p \|y\|_q$, $\|x\dag y\|_r\leq (2+c_rc_p) \|x\|_p \|y\|_q.$
  \item $H_\eps (x\ddag y)=H_\eps(x) \ddag y$, $x\dag H_\eps^{op}(y)=H_\eps^{op}(x\dag y).$

    In particular $x\ddag y \in \mathcal L_k$ if $x\in \mathcal L_k$
    and $x\dag y\in \mathcal R_k$ if $y\in \mathcal R_k$.
    \end{enumerate}
\end{prop}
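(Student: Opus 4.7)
For the first identity of (ii), I would simply unfold the definitions. Each summand $L_k((L_k x) y)$ lies in $\mathcal L_k$, so $H_\eps(x\ddag y) = \sum_k \eps_k L_k((L_k x) y)$. On the other hand $L_k(H_\eps x) = \eps_k L_k x$, and since $\eps_k \in \mathcal Z(\B)$ commutes with the $\B$-bimodule map $L_k$, the expression $H_\eps(x) \ddag y = \sum_k L_k((\eps_k L_k x) y)$ reduces to the same sum. So both equal $\sum_k \eps_k L_k((L_k x) y)$.

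The second identity $H_\eps^{op}(x\dag y) = x\dag H_\eps^{op}(y)$ is more delicate. First I would note that $H_\eps^{op}$ commutes with every $L_j$: on elementary tensors, $L_j$ and $R_i$ depend on the first and last letter respectively and hence commute, while $\eps_i^* \in \mathcal Z(\B)$ commutes with $L_j$ by the $\B$-bimodule property. Consequently $H_\eps^{op}$ also commutes with $\mathring{L^\bot_k}$, and the identity reduces to showing
\[ \mathring{L^\bot_k}\bigl[H_\eps^{op}((L_k x) y) - (L_k x) H_\eps^{op}(y)\bigr] = 0 \quad\text{for every }k. \]
The plan is to split on the length $n$ of the elementary tensors composing $L_k x$: when $n \geq 2$ the first letter $u_1 \in \A_k$ survives every reduction, so both $(L_k x) H_\eps^{op}(y)$ and $H_\eps^{op}((L_k x) y)$ lie in $\mathcal L_k \oplus \B$, which $\mathring{L^\bot_k}$ annihilates; when $n = 1$ (and $y$ non-scalar) the hypothesis $y \nprec^R (L_k x)^*$ of Proposition \ref{ghid}(v) holds automatically, placing the difference in $\B$, again killed by $\mathring{L^\bot_k}$. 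This length dichotomy, which hides a careful check of the amalgamated free-product reduction, is the main technical point I expect to have to spell out.

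For (i), the central identity is
\[ H_\eps(x \ddag y) = E_{\eps'}\bigl[H_{\eps\eps'}\bigl(H_{\eps'}(x)\, y\bigr)\bigr], \]
where $\eps'$ is an independent Rademacher sequence with $\eps'_0 = 0$ and $\eps\eps' = (\eps_k \eps'_k)_k$; both sides unfold to $\sum_k \eps_k L_k((L_k x) y)$ via $E_{\eps'}[\eps'_j \eps'_k] = \delta_{jk}$. Since $H_{\eps\eps'}$ has coefficients in $\{\pm 1, 0\}$ and is therefore admissible in Theorem \ref{main}, the triangle inequality, Theorem \ref{main}, and H\"older's inequality give
\[ \|H_\eps(x\ddag y)\|_r \leq E_{\eps'}\|H_{\eps\eps'}(H_{\eps'}(x) y)\|_r \leq c_r E_{\eps'}\|H_{\eps'}(x)\|_p \|y\|_q \leq c_r c_p \|x\|_p \|y\|_q. \]
The analogous representation $x \ddag y = E_{\eps'}[H_{\eps'}(H_{\eps'}(x) y)]$ yields the same bound on $\|x \ddag y\|_r$, and combining this with the H\"older estimates $\|xy\|_r,\|E(xy)\|_r \leq \|x\|_p \|y\|_q$ through the triangle inequality for $x \dag y = xy - x \ddag y - E(xy)$ produces the claimed bound $\|x \dag y\|_r \leq (2 + c_r c_p) \|x\|_p \|y\|_q$. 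Finally, the two ``in particular'' containments follow by specializing (ii) to the sign sequence that is $-1$ at coordinate $k$ and $+1$ elsewhere: the resulting identity, together with the fact that $x \ddag y$ has no $\B$-component, forces $L_i(x \ddag y) = 0$ for $i \neq k$ when $x \in \mathcal L_k$, and symmetrically $R_i(x \dag y) = 0$ for $i \neq k$ when $y \in \mathcal R_k$.
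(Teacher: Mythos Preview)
Your arguments for (i) and for the first identity of (ii) are correct and coincide with the paper's. The ``in particular'' consequences are also fine. The gap is in your treatment of the second identity $H_\eps^{op}(x\dag y)=x\dag H_\eps^{op}(y)$.

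You assert that when an elementary tensor $u=L_kx$ has length $n\geq 2$, ``the first letter $u_1\in\A_k$ survives every reduction, so both $(L_kx)H_\eps^{op}(y)$ and $H_\eps^{op}((L_kx)y)$ lie in $\mathcal L_k\oplus\B$.'' This is false. The reduction cascade in $uy$ can absorb $u_1$ whenever $|y|\geq n$. Concretely, take $k=1$, $u=u_1\otimes u_2$ with index sequence $(1,2)$, and $y=v_1\otimes v_2\otimes v_3$ with index sequence $(2,1,2)$. Expanding $uy$ produces the term
\[
E\bigl(u_1\,E(u_2v_1)\,v_2\bigr)\,v_3\ \in\ \mathring\A_2\subset\mathcal L_2,
\]
which lies outside $\mathcal L_1\oplus\B$. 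So neither $uy$ nor $H_\eps^{op}(uy)$ need sit in $\mathcal L_k\oplus\B$; the intermediate claim fails even though the conclusion $\mathring{L^\bot_k}\bigl[H_\eps^{op}(uy)-uH_\eps^{op}(y)\bigr]=0$ happens to hold in this example (both sides equal $(uy)\eps_2^*$).

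The repair is to change the dichotomy from ``$n\geq 2$ versus $n=1$'' to ``$y\nprec^R u^*$ versus $y\prec^R u^*$.'' In the first case Proposition~\ref{ghid}(v) applies and puts the difference in $\B$; note this already covers every $y$ with $|y|\geq |u|$, in particular your $n=1$ subcase. In the second case one has $|y|<|u|$, and \emph{then} your survival argument is valid: at most $|y|$ letters of $u$ can be consumed by reductions, so $u_1$ is untouched and both $uy$ and $uH_\eps^{op}(y)$ lie in $\mathcal L_k$, which $H_\eps^{op}$ preserves. The paper itself avoids any case analysis here: it derives the closed formula $x\dag y=E_{\eps'}\bigl(H_{\eps'}^{op}(x\dag H_{\eps'}^{op}(y))\bigr)$ from the Cotlar identity \eqref{cotlar} (applied with $H_\eps(x)$ and $H_{\eps'}^{op}(y^*)$ and using $H_\eps^2=\mathrm{id}$), and then the commutation $x\dag H_\eps^{op}(y)=H_\eps^{op}(x\dag y)$ follows by the substitution $\eps'\mapsto\eps'\eps$ and translation invariance of the Haar measure on $\{-1,1\}^\N$.
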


\begin{proof}
  (i) simply follows from Theorem \ref{main} and the definitions. We now prove (ii). For $\ddag$, this follows
  from its definition. 

  For $\dag$, we check the following formula from which the identity because of the translation invariance of the Haar measure on $\{-1,1\}^\N$:
  \begin{equation}
  x\dag y=E_{\eps'} \Big(  {H_{\varepsilon'}^{op}(x\dag H_{\eps'}^{op}(y))\Big)}.\label{idsharp}
  \end{equation}
  We first notice that the identity holds if $x\in \B$
  as $x\ddag y=0$ and $x\dag y=x(y-E(y))$. Similarly if $y\in \B$,
  $x\ddag y=(x-E(x))y$ and $x\dag y= 0$.
  Thus we can assume $E(x)=E(y)=0$.  Apply the Cotlar identity (\ref{cotlar})   to $ H_\eps(x)$ and
  $H_{\eps'}^{op}(y^*)$ and note $H_\eps^2(x)=x$ and
  $H_{\eps'}^{op 2}(y^*)=y^*$, we get $${xy}-E xy={H_\varepsilon(H_\eps(x)y)}+{H_{\varepsilon'}^{op}(xH_{\eps'}^{op}(y))}-{H^{op}_{\varepsilon'} H_\varepsilon(H_\eps(x)H_{\eps'}^{op}(y))}.$$
Taking expectations with respect to $\eps$ and $\eps'$ gives (\ref{idsharp}). One can also verify directly the identity for $\dag$ in (ii) by its bilinearity and looking at 
elementary tensors  $x,y \in \m W$ and using Proposition \ref{ghid} (iv)-(v).
\end{proof}

 \begin{rem}\label{imping}
There are situations for which one can slightly improve those
inequalities. For instance if $r=2$, then $\|x\dag y\|_r\leq (1+c_p)
\|x\|_p \|y\|_q$. Or in general $\|x\ddag y\|_r\leq c_r \sup_\eps
\|H_\eps(x)\|_p \|y\|_q$ and $\|x\dag y\|_r\leq (2+c_r) \sup_\eps
\|H_\eps(x)\|_p \|y\|_q.$
\end{rem}

\begin{lemma} \label{inter}
For $2\leq p<\infty $ and $x\in L^p(\A)$
\begin{eqnarray}
\|\sum_{k\in \N}\bmr{L_k(x)L_k(x)^*} \|_{\frac p2}&\leq& \alpha_p   \|\sum_k |(L_kx)^*|^2\|_{\frac p2}^\frac12(\sum_k \|L_kx\|^p_p)^\frac1p \label{newkey}\\ 
\|\sum_{k\in \N}\bmr{R_k(x)^*R_k(x)}\|_{\frac p2}&\leq& \alpha_p \|\sum_{k\in \N} |R_k(x)|^2 \|_{\frac p2}^\frac12 (\sum_k \|R_kx\|^p_p)^\frac1p
  \end{eqnarray}
  with $\alpha_p\leq 3c_4^2$ for $2<p\leq 4$ and $\alpha_p\leq 2\sqrt2 (c_{\frac p2}^2+c_{\frac p2})$ for $p\geq 4$.
\end{lemma}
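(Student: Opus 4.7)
The key observation is that for each $k$, the product $L_k(x)L_k(x)^*$ has both its first and its last letters (in its reduced form) in $\A_k$: any summand $x_{\un i} x_{\un j}^*$ with $i_1 = j_1 = k$ admits reductions only at the central junction between the two factors, propagating inward from there, and any fully cancelled contribution lands in $\B$. Consequently,
$$A_k := \bmr{L_k(x)L_k(x)^*} \in \m L_k \cap \m R_k, \qquad \sum_k \bmr{L_k(x)L_k(x)^*} = \sum_k A_k.$$

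The main estimate is for $p \geq 4$. Since $L_k\bigl(\sum_j A_j\bigr) = A_k$, Corollary \ref{corgen} yields $\bigl\|\sum_k A_k\bigr\|_{p/2} \leq \sqrt{2}\, c_{p/2}\, \bigl\|(A_k)_k\bigr\|_{L^{p/2}(\ell_2^{cr})}$. For $p \geq 4$ and $A_k$ self-adjoint, the cr-norm equals $\bigl\|(\sum_k A_k^2)^{1/2}\bigr\|_{p/2}$. From $|A_k| \leq L_k(x)L_k(x)^* + E(L_k(x)L_k(x)^*)$, squaring, then applying Kadison--Schwarz $(E y_k)^2 \leq E(y_k^2)$ with $y_k := L_k(x)L_k(x)^*$, together with the contractivity of $E$ on $L^{p/4}$, one obtains $\bigl\|\sum_k A_k^2\bigr\|_{p/4}^{1/2} \leq 2 \bigl\|\sum_k y_k^2\bigr\|_{p/4}^{1/2}$. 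Writing $y_k^2 = L_k(x)\, v_k\, L_k(x)^*$ with $v_k = L_k(x)^*L_k(x) \geq 0$, we recognise $\sum_k y_k^2 = M D M^*$ for the row $M = (L_k(x))_k$ and diagonal $D = \text{diag}(v_k)$; the identity $\|M D M^*\|_{p/4}^{1/2} = \|M D^{1/2}\|_{L^{p/2}(\ell_2^r)}$ and the H\"older inequality $\|M D^{1/2}\|_{L^{p/2}(\ell_2^r)} \leq \|M\|_{L^p(\ell_2^r)}\, \|D^{1/2}\|_p$ reproduce exactly
$$\bigl\|\textstyle\sum_k L_k(x)L_k(x)^*\bigr\|_{p/2}^{1/2}\,\bigl(\textstyle\sum_k \|L_k x\|_p^p\bigr)^{1/p},$$
giving $\alpha_p \leq 2\sqrt{2}\, c_{p/2}$ for $p \geq 4$.

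For $2 < p \leq 4$, where $p/2 < 2$ and the cr-norm is no longer the column norm, we interpolate. At $p = 2$ the inequality is trivial with $\alpha_2 \leq 2$. At $p = 4$ either the argument above, or the direct $L^2$-orthogonality $\tau(A_j A_k) = 0$ for $j \neq k$ (which follows because $A_j A_k$ is then a reduced word of length $\geq 2$), gives $\alpha_4 \leq 2\sqrt{2}$. Complex interpolation between these endpoints, after a standard linearisation of the $\ell_p$-sum, yields the stated $\alpha_p \leq 3 c_4^2$ in the range $2 < p \leq 4$. The second inequality, involving $R_k(x)^*R_k(x)$, is obtained by applying the first to $x^*$, which swaps the roles of $L_k$ and $R_k$.

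\textbf{Main obstacle.} The subtlest step is the range $2 < p < 4$: the factor $(\sum_k \|L_k x\|_p^p)^{1/p}$ is not a norm in $x$, so standard complex interpolation does not apply directly, and the cr-norm for $p/2 < 2$ is a Hardy-space infimum rather than a column norm. One must linearise the $\ell_p$-sum (for instance, as an operator-valued norm on an auxiliary algebra) and adapt the column estimate accordingly before interpolating between the $p = 2$ and $p = 4$ endpoints.
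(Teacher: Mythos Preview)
Your argument for $p\geq 4$ is genuinely different from the paper's and in fact yields the sharper constant $\alpha_p\leq 2\sqrt{2}\,c_{p/2}$ rather than $2\sqrt{2}(c_{p/2}^2+c_{p/2})$. The paper splits $\bmr{L_k(x)L_k(x)^*}=L_k(x)\ddag L_k(x)^*+L_k(x)\dag L_k(x)^*$ via the paraproducts preceding Proposition~\ref{paraid}, observes that the two pieces lie in $\m L_k$ and $\m R_k$ respectively, and then bounds each with Corollary~\ref{corgen} together with the paraproduct estimates of Proposition~\ref{paraid}; this brings in an extra factor $c_{p/2}$. Your route---Corollary~\ref{corgen} applied once to $\sum_k A_k$, the elementary operator inequality for $A_k^2$, Kadison--Schwarz, and the H\"older factorisation $\sum_k y_k^2=MDM^*=(MD^{1/2})(MD^{1/2})^*$---bypasses the paraproducts entirely and is more direct. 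One phrasing issue: the step ``$|A_k|\leq y_k+E(y_k)$, squaring'' is not legitimate, since the square is not operator monotone; what actually works is the direct inequality $(y_k-Ey_k)^2\leq 2y_k^2+2(Ey_k)^2$ (from $(a+b)^2\geq 0$), after which Kadison--Schwarz and contractivity of $E$ on $L^{p/4}$ give your claimed bound $\|\sum_k A_k^2\|_{p/4}^{1/2}\leq 2\|\sum_k y_k^2\|_{p/4}^{1/2}$.

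The range $2<p<4$, however, has a real gap. You have correctly identified that the map $x\mapsto\sum_k\bmr{L_k(x)L_k(x)^*}$ is quadratic and does not interpolate directly, but ``linearising the $\ell_p$-sum'' addresses only the right-hand side. What is missing is a \emph{bilinearisation of the left-hand side}: the paper replaces the quadratic form by the bilinear map $(x,y)\mapsto\sum_k\bmr{L_k(x)R_k(y)}$ (recall $L_k(x)^*=R_k(x^*)$), establishes the bilinear endpoint estimates at $p=2$ (H\"older) and $p=4$ (orthogonality plus the $\ddag$/$\dag$ split), and then interpolates. This is legitimate because, by Theorem~\ref{main}, the source spaces $\{\sum_k L_k x\otimes r_k\}$ and $\{\sum_k R_k y\otimes d_k\}$ are $c_p$-complemented in $L^p(\A)\otimes S_p$ for $2\leq p\leq 4$ and hence form genuine interpolation scales. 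Setting $y=x^*$ at the end recovers~\eqref{newkey}. Your endpoint computations are essentially the right ingredients, but without this bilinear framing the interpolation step is not justified, and your ``main obstacle'' paragraph focuses on the wrong difficulty.
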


\begin{proof}   
 Let us assume $p\geq4$ first. 
We use the decomposition $L_k(x)L_k(x)^*-E(L_k(x)L_k(x)^*)= L_kx \ddag (L_kx)^*+ L_kx \dag (L_kx)^*$. By Corollary \ref{corgen} and
 Proposition \ref{paraid}, as $L_k(x)\ddag   L_k(x)^*\in \mathcal L_k$ we have
  \begin{eqnarray*}
  && \| \sum_{k\in \N} L_k(x)\ddag   L_k(x)^*\|_{\frac p2}\\
  &\leq&  \sqrt2 c_{\frac p2}\max \Big[
  \| \sum_{k\in \N} L_k(x)\ddag   L_k(x)^* \tens c_k \|_{\frac p2},
  \| \sum_{k\in \N} L_k(x)\ddag   L_k(x)^* \tens r_{k} \|_{\frac p2}\Big].
   \end{eqnarray*} 
   Using the bilinearity of $\ddag$, we have
  \begin{eqnarray*}
    \sum_{k\in \N} L_k(x)\ddag L_k(x)^*\tens r_k & =& \sum_{k\in \N}
    (L_k(x)\tens r_k)\ddag (L_k(x)^*\tens d_k)\\
     & =& E_\eps (\sum_k \eps_k L_k(x)\tens r_k)\ddag (\sum_k \eps_k L_k(x)^*\tens d_k)\nonumber\\
    & =& E_\eps\Big[ H_\eps(\sum_k L_k(x)\tens r_k)\ddag H_{ \eps}^{op}(\sum_k L_k(x)^*\tens d_k)\Big] 
  \end{eqnarray*}
  So   we can conclude from  Theorem \ref{main} and Remark \ref{imping}  that
  \begin{eqnarray}
  \|\sum_{k\in \N}L_k(x)\ddag L_k(x)^*\otimes r_k\|_{\frac p2}&\leq& c_{\frac p2} 
 \sup_{\eps,\eps'} \|H_{\eps'} \sum_k L_kx\otimes r_k\|_p \|H_\eps^{op}\sum_k L_k(x)^*\tens d_k\|_p\nonumber\\
 &\leq&c_{\frac p2}    \|\sum_k L_k(x)\tens r_k\|_p(\sum_k \|L_kx\|^p_p)^\frac1p.\label{rg} 
      \end{eqnarray}
 Similarly we have 
 \begin{eqnarray}
 \|\sum_{k\in \N}L_k(x)\ddag L_k(x)^*\otimes c_k\|_{\frac p2}&=&E_\eps\Big[ H_\eps(\sum_k L_k(x)\tens d_k)\ddag H_{ \eps}^{op}(\sum_k L_k(x)^*\tens c_k)\Big] \nonumber\\
 &\leq& c_{\frac p2}  (\sum_k \|L_kx\|^p_p)^\frac1p \|\sum_k L_k(x)\tens r_k\|_p. \label{xy}
 \end{eqnarray}
 Combining these two estimates we get 
 $$  \| \sum_{k\in \N} L_k(x)\ddag   L_k(x)^*\|_{\frac p2}\leq \sqrt 2c^2_{\frac p2}   \|\sum_k L_k(x)\tens r_k\|_p(\sum_k \|L_kx\|^p_p)^\frac1p,$$   for $p\geq 4$.
  We can treat the $\dag$ term similarly since $L_kx\dag (L_kx)^*\in {\cal R}_k$ and get 
  $$  \| \sum_{k\in \N} L_k(x)\dag   L_k(x)^*\|_{\frac p2}\leq \sqrt 2c_{\frac p2}(2+c_{\frac p2})   \|\sum_k L_k(x)\tens r_k\|_p(\sum_k \|L_kx\|^p_p)^\frac1p.$$ 
  We then get (\ref{newkey}) for $p\geq 4$ with constant $2\sqrt2 (c_{\frac p2}^2+c_{\frac p2})$.

To deal with the remaining cases, we will use interpolation by proving
a better bilinear inequality for $2\leq p\leq 4$.
$$\|\sum_{k\in \N}\bmr{L_k(x)R_k(y)} \|_{\frac p2}\leq  3 c_4^2  \|\sum_k L_kx \tens r_k\|_{p}(\sum_k \|R_ky\|^p_p)^\frac1p.$$
The spaces consisting of elements of the form  $\sum_k L_kx \tens r_k$ and $\sum_k R_ky \tens d_k$ are 
$c_p$-complemented in $L^p(\A)\tens S_p$ by Theorem \ref{main}. Hence the norms on the right hand side interpolate for $2\leq p\leq 4$ (both 
with constant $c_4^{2(1-2/p)}$).

We just need to justify the endpoint inequalities.
For $p=2$, we have by H\"older's inequality  that \begin{eqnarray*}
\|\sum_{k\in\N}\bmr{L_k(x)R_k(y)}\|_{1}
&\leq&2\|\sum_k L_k(x)\tens r_k\|_2\|\sum_k R_k(y)\tens c_k\|_2\\
&=&2\|\sum_k L_k(x)\tens r_k\|_2(\sum_k \|L_ky\|^2_2)^\frac12.
\end{eqnarray*}
For $p=4$, by orthogonality and as in \eqref{rg} 
\begin{eqnarray*}
   \| \sum_{k\in \N} L_k(x)\ddag   R_k(y)\|_{2}
  &=& \| \sum_{k\in \N} L_k(x)\ddag   R_k(y)\tens r_k\|_{2}\\
&\leq &  \|\sum_k L_kx \tens r_k\|_{4}(\sum_k \|R_ky\|^4_4)^\frac14.
   \end{eqnarray*}
Similarly we get $\| \sum_{k\in \N} L_k(x)\dag   R_k(y)\|_{2}\leq 2
 \|\sum_k L_kx \tens r_k\|_{4}(\sum_k \|R_ky\|^4_4)^\frac14.$
Thus by interpolation we get (\ref{newkey}) for $2<p<4$ with a constant $3c_4^{4(1-2/p)}$. 
\end{proof}
 
  \begin{thm}\label{improved}
  For $2\leq p<\infty$ and $x\in L^p$ 
$$ \beta_p^{-1}\|x \|_p\leq  \max\Big\{\|(\sum_{k=0}^\infty |L_k(x)|^2)^\frac12 \|_p,\| E (xx^* )\|_{\frac p2}^{\frac1 2}\Big\}\leq \sqrt 2 c_p\|x\|_p $$
  and  $$ \beta_p^{-1}\|x \|_p\leq  \max\Big\{\|(\sum_{j=0}^\infty |R_j(x)^*|^2)^\frac12 \|_p,\| E (x^*x )\|_{\frac p2}^{\frac 12}\Big\}\leq \sqrt 2 c_p \|x\|_p $$
  with $\beta_p\leq\sqrt2c_p(1+\alpha_p)\lesssim c_p^3$.
  \end{thm}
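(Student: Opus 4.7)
The plan is to deduce both inequalities from Corollary~\ref{corgen} by trading the row term for the better $\|E(xx^*)\|_{p/2}^{1/2}$ via Lemma~\ref{inter}. Set $C:=\|(\sum_k |L_kx|^2)^{1/2}\|_p$, $R:=\|(\sum_k L_k(x)L_k(x)^*)^{1/2}\|_p$ and $B:=\|E(xx^*)\|_{p/2}^{1/2}$. For $p\geq 2$ we have $\|(L_kx)_{k\geq 0}\|_{L^p(\ell_2^{cr})}=\max\{C,R\}$, so Corollary~\ref{corgen} reads $\|x\|_p\simeq^{\sqrt 2 c_p}\max\{C,R\}$.

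The upper bound is immediate: $C$ is dominated by the $\ell_2^{cr}$-norm of $(L_kx)$, and $B\leq \|xx^*\|_{p/2}^{1/2}=\|x\|_p$ since $E$ is $L^{p/2}$-contractive.

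For the lower bound, I first observe that the orthogonality of the subspaces $\m L_k$ over $\m B$ yields $\sum_{k\geq 0} E(L_k(x)L_k(x)^*)=E(xx^*)$. Indeed, for $j\neq k$ with both positive, after reduction $L_j(x)L_k(x)^*$ sits in centered components $\m W_{\un p}$ whose index $\un p$ starts with $j$ and ends with $k$ (hence has length $\geq 2$), on which $E$ vanishes; the $k=0$ diagonal term contributes $E(x)E(x)^*$, which accounts for the missing $|E(x)|^2$; and mixed terms with one of $j,k$ equal to $0$ vanish because $E\circ L_k=0$ for $k\geq 1$. Consequently
\[
\sum_k L_k(x)L_k(x)^* = \sum_k \bmr{L_k(x)L_k(x)^*} + E(xx^*),
\]
and Lemma~\ref{inter} combined with the $L^{p/2}$-triangle inequality gives $R^2\leq \alpha_p R D + B^2$, where $D:=(\sum_k \|L_kx\|_p^p)^{1/p}$.

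To close the loop I would bound $D\leq C$, using the general fact $(\sum_k \|y_k\|_p^p)^{1/p}\leq \|(\sum_k |y_k|^2)^{1/2}\|_p$ for $p\geq 2$: this is equality at $p=2$, is a contraction at $p=\infty$ (since $y_k^*y_k\leq \sum_j y_j^*y_j$ forces $\sup_k\|y_k\|_\infty\leq \|(\sum_j y_j^*y_j)^{1/2}\|_\infty$), and follows for intermediate $p$ by complex interpolation of the identity map $L^p(\m A;\ell_2^c)\to\ell_p(L^p(\m A))$. Then $R^2\leq \alpha_p RC + B^2$ yields $R\leq \alpha_p C + B$ by the quadratic formula and $\sqrt{a+b}\leq \sqrt a + \sqrt b$, hence $\max\{C,R\}\leq (1+\alpha_p)\max\{C,B\}$ by a short case analysis. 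Combining with Corollary~\ref{corgen} gives $\|x\|_p\leq \sqrt 2 c_p(1+\alpha_p)\max\{C,B\}$, and since Lemma~\ref{inter} provides $\alpha_p\lesssim c_p^2$ we obtain $\beta_p\lesssim c_p^3$. The second, row-based, inequality follows by applying the same argument to $x^*$, using $L_k(x^*)=(R_k x)^*$ to convert the column statement about $x^*$ into the row statement about $x$. The only delicate ingredient is the comparison $D\leq C$, but this is a classical contraction rather than a genuine obstacle.
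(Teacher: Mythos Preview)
Your proof is correct and follows essentially the same route as the paper's: both use Corollary~\ref{corgen} to reduce to controlling the row term $R$, decompose $\sum_k L_k(x)L_k(x)^*$ into its centered part plus $E(xx^*)$, apply Lemma~\ref{inter} to the centered part to obtain $R^2\leq \alpha_p RD + B^2$ (hence $R\leq \alpha_p D + B$), and then use the contraction $L^p(\ell_2^c)\to\ell_p(L^p)$ for $p\geq 2$ (equivalently, $c_k\mapsto d_k$ is contractive) to replace $D$ by $C$. The second inequality by adjoints is also exactly what the paper does.
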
 
   
\begin{proof}
For the first equivalence, the upper inequality follows from Corollary
\ref{corgen}. For the lower bound, by Lemma \ref{inter}
as $E(xx^*)=\sum_{k\geq 0} E (L_k(x)L_k(x)^*)$ and $\bmr{L_0(x)L_0(x)^*}=0$,
$$\| \sum_{k\geq 0} L_k(x)\tens r_k\|_p \leq   \alpha_p 
\| \sum_{k\in \N} L_k(x)\tens d_k\|_p + \| E (xx^* )\|_{p/2}^{1/2}.$$
But as $p\geq 2$ the map $c_k\mapsto d_k$ is a contraction on $L^p$, so we deduce
$$\| \sum_{k\geq 0} L_k(x)\tens r_k\|_p\leq  \alpha_p \| \sum_{k\geq 0} L_k(x)\tens c_k\|_p + \| E (xx^* )\|_{p/2}^{1/2},$$
and we conclude the lower bound by  Corollary
\ref{corgen} again.   The other inequality follows by taking adjoints.\end {proof}
  We get the following Rosenthal type inequality as a direct application.
\begin{corollary}\label{ros}
  Let $2<p<\infty$ 
  \begin{enumerate}[(i)]
  \item For $x=\sum_{k=0}^\infty a_k$ with $a_k\in \mathcal L_k$, we have
   $$ { \beta_p^{-2}} \|x\|_p \leq \|E( xx^*)\|_{\frac p2}^{\frac 12}+ \|E(x^*x) \|_{\frac p2}^{\frac 12}+ \|\sum_{k,j}R_j(a_k)\otimes e_{k,j}\|_p \leq { (2c_p^2+2)} \|x\|_p$$
  \item For $x=\sum_{k=0}^\infty a_k$ with $a_k\in \mathcal L_k\cap \mathcal R_k$, we have 
  $${ \beta_p^{-2}} \| x\|_p  \leq \|E(xx^*)\|_{\frac p2}^{\frac12}+ \|E(x^*x)\|_{\frac p2}^{\frac 12}+ (\sum_k \|a_k\|_p^p)^{\frac 1p} \leq { (2c_p^{2}+2)}\|x\|_p. $$
\end{enumerate}
  \end{corollary}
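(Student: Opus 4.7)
The plan is to apply Theorem \ref{improved} twice in succession, the second time on a matrix amplification of $x$. First, since $a_k\in\m L_k$ and the subspaces $\m L_k$ are mutually orthogonal over $\m B$ (with $\m L_0=\m B$), the identity $L_k(x)=a_k$ holds for every $k\geq 0$. A short word-analysis in $\m W$ also shows $E(a_k a_l^*)=E(a_k^*a_l)=0$ whenever $k\neq l$, because the reduced products then have length $\geq 1$ and start with a letter in $\mr{\m A}_k$ (respectively end with one in $\mr{\m A}_l$); consequently $E(xx^*)=\sum_k E(a_ka_k^*)$ and $E(x^*x)=\sum_k E(a_k^*a_k)$. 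Applying the first equivalence in Theorem \ref{improved} to $x$ then gives
\[
\beta_p^{-1}\|x\|_p \;\leq\; \|(\sum_k |a_k|^2)^{1/2}\|_p + \|E(xx^*)\|_{p/2}^{1/2} \;\leq\; \sqrt 2\, c_p\,\|x\|_p.
\]

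To handle the surviving column term I would introduce the amplified element $X:=\sum_k a_k\tens c_k\in L^p(\m A\tens B(\ell_2))$; by construction $\|X\|_p=\|(\sum_k|a_k|^2)^{1/2}\|_p$. Tensoring with $B(\ell_2)$ is compatible with the amalgamated free product: $\m A\tens B(\ell_2)$ is the amalgamated free product of $\m A_k\tens B(\ell_2)$ over $\m B\tens B(\ell_2)$, and $R_j\tens\mathrm{id}$ is the $j$-th right projection there, so the second equivalence in Theorem \ref{improved} applies verbatim to $X$:
\[
\beta_p^{-1}\|X\|_p \;\leq\; \|(\sum_j |R_j(X)^*|^2)^{1/2}\|_p + \|E(X^*X)\|_{p/2}^{1/2} \;\leq\; \sqrt 2\, c_p\,\|X\|_p.
\]
Setting $Y:=\sum_{k,j}R_j(a_k)\tens e_{k,j}$, two elementary bookkeeping identities, namely $E(X^*X)=E(x^*x)\tens e_{1,1}$ and $YY^*=\sum_j|R_j(X)^*|^2$, reduce the middle expression to $\|Y\|_p+\|E(x^*x)\|_{p/2}^{1/2}$. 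Chaining the two equivalences, and using the trivial contractive bound $\|E(\cdot)\|_{p/2}^{1/2}\leq\|\cdot\|_p$ on the upper side, produces (i) with constants $\beta_p^{-2}$ and $2c_p^2+2$ (the ``$+2$'' accounting for the two contractive terms and the $2c_p^2$ arising from the two-step $\sqrt 2 c_p$ chain).

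Part (ii) is then immediate: the stronger hypothesis $a_k\in\m L_k\cap\m R_k$ forces $R_j(a_k)=\delta_{jk}a_k$, so the matrix $Y=\sum_k a_k\tens e_{k,k}$ is diagonal and $\|Y\|_p=(\sum_k\|a_k\|_p^p)^{1/p}$, converting the estimate (i) into (ii). The only point I anticipate as requiring any care is the second application of Theorem \ref{improved}: one must verify that the amalgamated free product structure, and with it the constants $\beta_p$ and $c_p$, survive the tensoring with $B(\ell_2)$ so that $R_j\tens\mathrm{id}$ really is the $j$-th right projection in the enlarged context. This is, however, precisely the content of the complete boundedness remark following Theorem \ref{main}, so no new ideas are required.
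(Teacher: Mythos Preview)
Your proposal is correct and follows essentially the same approach as the paper's proof: apply Theorem~\ref{improved} once to $x$ and a second time to the column amplification $X=\sum_k a_k\otimes c_k$, then read off (ii) from (i) via the diagonal $Y=\sum_k a_k\otimes e_{k,k}$. Your direct computation of $YY^*=\sum_j|R_j(X)^*|^2$ replaces the paper's slightly more abstract remark that $(r_k\otimes c_k)$ generate the canonical $\ell_p$-basis in $S_p$, but this is only a cosmetic difference in bookkeeping.
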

 
\begin{proof}
Apply Theorem \ref{improved} twice and notice that $(r_k\tens c_k)$ generate the canonical basis of $\ell_p$ in $S_p$. We get (i) and (ii) follows immediately. 
 \end{proof} 
 
\begin{rem}
We point out that   Corollary \ref{ros} (ii)   was  proved in \cite{JPX07} (Theorem A)
  when $a_k$ are supported on reduced words with a fixed length  with constants  
independent of $p$ but   depends  on the length.  
Noticing that by the Khintchine inequalities from
\cite{RX06}, $H_\eps$ and $H_\eps^{op}$ are bounded on words of length
at most $d$ with a constant that depends only on $d$ and $\A$, thus by
interpolation, the  argument above also implies Corollary \ref{ros}  (ii) with constants independent of $p$ but dependent on the length. 
\end{rem}


\begin{rem}
All the results of this section also hold in the completely bounded setting.
\end{rem}

\section{Free groups}
We can apply the previous results to the free group as it is naturally
a free product. Let $g_i$, $i\in \N$ be the set of generators of
$\F_\infty$. We let $\m L_{g_i}$ and $\m L_{g_i^{-1}}$ be the set of
reduced word starting by $g_i$ and $g_i^{-1}$ respectively and $\m
L_{g_i^{\pm}}=\m L_{g_i}\cup\m L_{g_i^{-1}}$.  We denote by $
L_{g_i}$, $L_{g_i^{-1}}$ and $L_{g_i^{\pm}}$ the associated
projections. We use the notation $R_{g_i}$ and $\mathcal R_{g_i},...$ for the right analogues. We will often use the convention $g_i=g_{-i}^{-1}$ for
$i<0$ so that $L_{g_i^{-1}}=L_{g_{-i}}$ for any $i\in \Z^*$. Finally $S$ will denote
the set $\{g_i\;;\;i\in \Z^*\}$.

Let $\M$ be a finite von Neumann algebra. Theorem \ref{main} immediately gives that, for any $x\in L_p(\hat\F_\infty)\otimes L_p({\M}), 1<p<\infty$ and  
 sequences of unitaries $\varepsilon_i\in \m Z(\m M), \|\varepsilon_k\|\leq 1$,
\begin{equation}\label{freegen}\| (Id\otimes\tau) x + \sum_i \varepsilon_i (Id\otimes
 L_{g_i^{\pm}})(x)\|_p\simeq^{ c_{p}} \| x\|_{p}.\end{equation}

 We slightly extend it
\begin{thm}
\label{main2}
Let $(\varepsilon_k)_{k\in \Z}$ be a sequence in   $\m Z(\m M), \|\varepsilon_k\|\leq 1$. Then for any $x\in L^p(\hat \F_\infty)\otimes L^p(\m M)$ and $1<p<\infty$
$$\|\varepsilon_0 (Id\otimes\tau) x + \sum_{k\in \Z^*}^\infty \varepsilon_k 
(Id\otimes
 L_{g_k})(x)\|_p\leq c_{p} \| x\|_{p}.$$ 
 The equivalence holds if we assume further that $\varepsilon_k$ are unitaries in $\m Z(\m M)$.
 \end{thm}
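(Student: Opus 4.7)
My plan is to deduce Theorem \ref{main2} directly from \eqref{freegen} by embedding $\F_\infty$ into a larger free group $\tilde\F$ in such a way that each signed projection $L_{g_k}$, $k\in\Z^*$, becomes the pullback of a one-generator projection $L_{h^\pm}$ in $\tilde\F$; the two independent coefficients $\varepsilon_k,\varepsilon_{-k}$ of Theorem \ref{main2} then turn into two independent per-generator coefficients to which \eqref{freegen} applies directly.

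Concretely, I let $\tilde\F$ be the free group on generators $\{a_k,b_k:k\in\N\}$ and consider the homomorphism $\phi:\F_\infty\to\tilde\F$ determined by $\phi(g_k)=a_kb_k^{-1}$. For a reduced word $w=g_{k_1}^{n_1}\cdots g_{k_r}^{n_r}\in\F_\infty$ (so $k_i\neq k_{i+1}$ and $n_i\neq 0$), the image
\[\phi(w)=(a_{k_1}b_{k_1}^{-1})^{n_1}\cdots(a_{k_r}b_{k_r}^{-1})^{n_r}\]
is already reduced in $\tilde\F$: within each block $(a_{k_i}b_{k_i}^{-1})^{n_i}$ the letters alternate between $a_{k_i}^{\pm}$ and $b_{k_i}^{\pm}$ without cancellation, and at the junction of two consecutive blocks the end-letter involves the pair $\{a_{k_i},b_{k_i}\}$ while the start-letter involves the disjoint pair $\{a_{k_{i+1}},b_{k_{i+1}}\}$, so nothing cancels. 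Hence $\phi$ is injective and induces a trace-preserving $*$-monomorphism $\iota:L(\F_\infty)\hookrightarrow L(\tilde\F)$, which extends to an $L^p$-isometric embedding $L^p(\hat\F_\infty)\otimes L^p(\m M)\hookrightarrow L^p(\hat{\tilde\F})\otimes L^p(\m M)$ for every $1\leq p\leq\infty$.

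The same reduced-form computation pinpoints the first letter of $\phi(w)$: it equals $a_{k_1}$ when $n_1>0$ and $b_{k_1}$ when $n_1<0$, and it is never one of the inverses $a_{k_1}^{-1}$ or $b_{k_1}^{-1}$. From this I read off the intertwining identities
\[L_{a_k^\pm}\circ\iota=\iota\circ L_{g_k},\qquad L_{b_k^\pm}\circ\iota=\iota\circ L_{g_{-k}}\qquad (k\in\N),\]
together with $\tau\circ\iota=\tau$, all holding after tensoring with $\mathrm{Id}_{\m M}$. I then apply \eqref{freegen} to the free group $\tilde\F$, choosing as coefficients $\varepsilon_0$ in front of $(\mathrm{Id}\otimes\tau)$, $\varepsilon_k$ in front of $L_{a_k^\pm}$, and $\varepsilon_{-k}$ in front of $L_{b_k^\pm}$; these are admissible since they all belong to $\m Z(\m M)$ with norm at most $1$. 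Evaluating the resulting inequality at $y=(\iota\otimes\mathrm{Id})x$ and using the intertwinings, the left-hand side becomes $(\iota\otimes\mathrm{Id})$ applied to $\varepsilon_0(\mathrm{Id}\otimes\tau)x+\sum_{k\in\Z^*}\varepsilon_k(\mathrm{Id}\otimes L_{g_k})x$, while the right-hand side is $c_p\|x\|_p$ by the $L^p$-isometry of $\iota$. This yields Theorem \ref{main2} with the same constant $c_p$, and the two-sided equivalence for unitary $\varepsilon_k$ transfers identically from the equivalence in \eqref{freegen}.

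The only substantive point in the whole argument is the combinatorial verification that $\phi(w)$ is reduced in $\tilde\F$ for every reduced $w\in\F_\infty$; this is elementary and relies only on the fact that consecutive indices in a reduced word of $\F_\infty$ differ, so that no cancellation can occur at the block junctions $(a_{k_i}b_{k_i}^{-1})^{n_i}(a_{k_{i+1}}b_{k_{i+1}}^{-1})^{n_{i+1}}$.
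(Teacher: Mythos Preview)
Your proof is correct and follows essentially the same route as the paper: both arguments embed $\F_\infty$ into a free group on twice as many generators so that the two signed projections $L_{g_k}$ and $L_{g_k^{-1}}$ become pullbacks of distinct per-generator projections, and then invoke \eqref{freegen}. The paper uses $\pi:\F_\infty\to\F_\infty*\F_\infty$, $g_i\mapsto g_ih_i$, whereas you use $\phi:g_k\mapsto a_kb_k^{-1}$; these differ only by the cosmetic relabeling $h_i\leftrightarrow b_i^{-1}$, and the resulting intertwining identities are identical.
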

\begin{proof}
 We may assume $\varepsilon_0=1$. We consider the following group
 embedding $\pi :\F_\infty \to \F_\infty*\F_\infty$ defined by
 $\pi(g_i)=g_ih_i$ where $(h_i)$ are the free generators of the second
 copy of $\F_\infty$.  This extends to a complete isometry for
 $L^p$-spaces and one checks directly that $$( \sum_{k=0}^\infty
 \varepsilon_{-k}L_{h_k^{\pm}}+ \tau + \sum_{k=0}^\infty
 \varepsilon_{k}L_{g_k^{\pm}})\circ \pi = \pi\circ ( \sum_{k=0}^\infty
 \varepsilon_{-k}L_{g_k^{-1}}+ \tau + \sum_{k=0}^\infty
 \varepsilon_{k}L_{g_k}).$$
The statement follows  from the amalgamated version of \eqref{freegen}.
 \end{proof}

The proof of Lemma \ref{inter} and Theorem \ref{improved} can easily be adapted to the free group
where $H_\eps=\eps_e L_e +\sum_{h\in S} \eps_h L_h$ with $|\eps_h|=1$ and the convention $L_e x=\tau x$. We simply give the result
 \begin{thm}\label{improvedfree}
  For $2<p<\infty$, $x\in L_p(\hat\F_\infty)\otimes L_p(\M)$
  $$\beta_p^{-1}\|x \|_p\leq \max\Big\{\|(\sum_{|h|\leq 1} |L_h(x)|^2)^\frac12 \|_p, \| (\tau\tens Id) (xx^*))\|_{p/2}^{1/2}\Big\}\lesssim {\sqrt 2 c_p}\|x\|_p $$
  and  $$\beta_p^{-1}  \|x \|_p\leq \max\Big\{\|(\sum_{|h|\leq1}|R_h(x)^*|^2)^\frac12 \|_p,\| (\tau\tens Id) (x^*x )\|_{p/2}^{1/2}\Big\}\leq {\sqrt 2c_p}\|x\|_p. $$
  \end{thm}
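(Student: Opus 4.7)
The plan is to run the same argument that established Theorem~\ref{improved} for amalgamated free products, now in the free group setting, using Theorem~\ref{main2} in place of Theorem~\ref{main} and the free-group paraproducts in place of $\ddag,\dag$. The extra projection $L_e = \tau$ appearing in the sum over $|h|\leq 1$ is absorbed into the $(\tau\otimes \mathrm{Id})(xx^*)$ term via orthogonality, so it causes no real complication. The upper bound is immediate: by Theorem~\ref{main2} with Rademacher variables indexed by $\{e\}\cup S$ and the noncommutative Khintchine inequality~\eqref{khin}, one gets the free-group analog of Corollary~\ref{corgen}, namely $\|(L_h x)_{|h|\leq 1}\|_{L^p(\ell_2^{cr})} \simeq^{\sqrt{2}c_p} \|x\|_p$, and for $p\geq 2$ this $cr$-norm is the max of the column and row expressions. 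Since $|L_e(x)|^2 \leq (\tau\otimes \mathrm{Id})(xx^*)$, both the column and row expressions are dominated by the corresponding maxima in the statement.

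For the lower bound I would first set up the paraproducts $x\ddag y = \sum_{h\in S} L_h((L_h x) y)$ and $x\dag y = xy - x\ddag y - \tau(xy)$, and check that they satisfy the free-group analog of Proposition~\ref{paraid}: $x\ddag y\in \mathcal{L}_h$ if $x\in \mathcal{L}_h$, and $x\dag y\in \mathcal{R}_h$ if $y\in \mathcal{R}_h$, with boundedness supplied by Theorem~\ref{main2}. Then I would reproduce Lemma~\ref{inter} verbatim, splitting $L_h(x)L_h(x)^* - \tau(L_h(x)L_h(x)^*) = L_h(x)\ddag L_h(x)^* + L_h(x)\dag L_h(x)^*$ and bounding each piece after tensoring with $c_h, r_h, d_h$, to obtain
\[
  \Bigl\|\sum_{h\in S} \bmr{L_h(x)L_h(x)^*}\Bigr\|_{p/2}
  \;\leq\; \alpha_p \,\Bigl\|\sum_h L_h(x)\otimes r_h\Bigr\|_p \Bigl(\sum_h \|L_h x\|_p^p\Bigr)^{1/p},
\]
with $\alpha_p\lesssim c_p^2$ (using interpolation between $p=2$ and $p=4$ for $2<p<4$ exactly as in Lemma~\ref{inter}).

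To close the proof, set $A = \|\sum_h L_h(x)\otimes r_h\|_p$, $B = \|(\tau\otimes\mathrm{Id})(xx^*)\|_{p/2}^{1/2}$, and $C = (\sum_h \|L_h x\|_p^p)^{1/p}$. By orthogonality of the $L_h(x)$ over $\tau$, $A^2 \leq B^2 + \alpha_p A C$, whence $A \leq B + \alpha_p C$. Since the diagonal-to-column map is contractive on $S_p$ for $p\geq 2$, $C \leq \|\sum_h L_h(x)\otimes c_h\|_p$, so $A$ is controlled by $B$ and the column norm. Plugging this into the $cr$-equivalence from the first paragraph (upgraded from Corollary~\ref{corgen}) gives the lower bound with $\beta_p\lesssim c_p^3$. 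The row version follows by taking adjoints, as in the proof of Theorem~\ref{improved}.

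The main obstacle is verifying that the paraproduct identities and the proof of Lemma~\ref{inter} transfer without loss to an indexing over $h\in S$ rather than $k\in \mathbb{N}$. Since the argument only uses (i) boundedness of $H_\varepsilon, H_\varepsilon^{op}$ on $L^p$ for unitary $\varepsilon_h\in \m Z(\m M)$, provided by Theorem~\ref{main2}, and (ii) the placement of $L_h(x)\ddag L_h(x)^*$ in $\mathcal L_h$ and $L_h(x)\dag L_h(x)^*$ in $\mathcal R_h$, which is structural, this transfer should be routine and the author's remark that the proof ``can easily be adapted'' is justified.
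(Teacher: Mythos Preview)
Your proposal is correct and follows exactly the route the paper indicates: adapt Lemma~\ref{inter} and Theorem~\ref{improved} to the free-group indexing $h\in\{e\}\cup S$, using Theorem~\ref{main2} in place of Theorem~\ref{main} to power the paraproduct bounds and the Khintchine-based analogue of Corollary~\ref{corgen}. One small slip: since $(L_h x)^*\in\mathcal R_{h^{-1}}$, the $\dag$-pieces land in $\mathcal R_{h^{-1}}$ rather than $\mathcal R_h$, but as $h\mapsto h^{-1}$ is a bijection on $S$ this makes no difference to the orthogonality argument.
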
 
  \begin{corollary}\label{rosfree}
  Let $2<p<\infty$ 
  \begin{enumerate}[(i)]
  \item For $x=\sum_{k=-\infty}^\infty a_k$ with $a_k\in \mathcal L_k$, we have
   $$ { \beta_p^{-2}} \|x\|_p \leq \|(\tau\tens Id)( xx^*)\|_{\frac p2}^{\frac 12}+ \|(\tau\tens Id)(x^*x) \|_{\frac p2}^{\frac 12}+ \|\sum_{k,j}R_j(a_k)\otimes e_{k,j}\|_p \leq { (2c_p^2+2)} \|x\|_p$$
  \item For $x=\sum_{k=-\infty}^\infty a_k$ with $a_k\in \mathcal L_k\cap \mathcal R_{\phi(k)}$ and $\phi:\Z\mapsto \Z$ an one to one map, we have 
  $${ \beta_p^{-2}} \| x\|_p  \leq \|(\tau\tens Id)(xx^*)\|_{\frac p2}^{\frac12}+ \|(\tau\tens Id)(x^*x)\|_{\frac p2}^{\frac 12}+ (\sum_k \|a_k\|_p^p)^{\frac 1p} \leq { (2c_p^{2}+2)}\|x\|_p. $$
\end{enumerate}
  \end{corollary}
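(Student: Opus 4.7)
The plan is to apply Theorem \ref{improvedfree} twice, exactly as in the amalgamated proof of Corollary \ref{ros}. First I would apply the column version of Theorem \ref{improvedfree} to $x=\sum_k a_k$: since each $a_k\in\m L_k$, we have $L_h(x)=a_h$ for $h\in S$ (and $L_e(x)=a_0$ if an identity component is present), so
\[\|x\|_p\ \sim\ \Big\|\big(\sum_k|a_k|^2\big)^{1/2}\Big\|_p+\|(\tau\otimes Id)(xx^*)\|_{p/2}^{1/2}\]
with equivalence constants controlled by $\beta_p$.

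Next, I would rewrite the column square function as $\|y\|_p$ for $y=\sum_k a_k\otimes c_k\in L^p(\hat\F_\infty)\otimes L^p(\m M\otimes B(\ell_2))$, and apply the row version of Theorem \ref{improvedfree} to $y$, viewed relative to the enlarged algebra $\m M\otimes B(\ell_2)$. A direct calculation using $c_k r_l=e_{k,l}$ gives $\sum_{|h|\leq 1}|R_h(y)^*|^2=zz^*$ with $z=\sum_{j,k}R_j(a_k)\otimes e_{k,j}$, so the row square function coincides with $\|z\|_p$. For the trace term, the cross contributions $(\tau\otimes Id)(a_k^*a_l)$ vanish when $k\neq l$: the last letter of $a_k^*$ is $g_{-k}$ and the first letter of $a_l$ is $g_l$, and these do not cancel unless $k=l$, so $a_k^*a_l$ has no identity component. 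Hence $(\tau\otimes Id)(y^*y)$ collapses to $(\tau\otimes Id)(x^*x)\otimes e_{1,1}$, and combining the two equivalences yields part~(i).

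Part~(ii) is then immediate: the additional assumption $a_k\in\m R_{\phi(k)}$ with $\phi$ injective forces $R_j(a_k)=\delta_{j,\phi(k)}a_k$, so $z=\sum_k a_k\otimes e_{k,\phi(k)}$, and $z^*z=\sum_k a_k^*a_k\otimes e_{\phi(k),\phi(k)}$ is block-diagonal; therefore $\|z\|_p=(\sum_k\|a_k\|_p^p)^{1/p}$. The only slightly delicate step in the whole argument is the vanishing of the cross trace terms $(\tau\otimes Id)(a_k^*a_l)$ for $k\neq l$; this plays the role of the orthogonality of the subspaces $\m W_{\un i}$ over $\m B$ used in the amalgamated version, and can be read off from the reduced-word structure of $\F_\infty$.
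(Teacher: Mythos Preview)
Your proposal is correct and follows exactly the paper's approach: apply Theorem \ref{improvedfree} twice (first the column version to $x$, then the row version to $y=\sum_k a_k\otimes c_k$), and for (ii) use injectivity of $\phi$ to collapse $z$ to a diagonal. The paper's own proof is the one-line ``Apply Theorem \ref{improvedfree} twice we get (i); (ii) follows immediately because $\phi$ is one to one,'' and your write-up simply unpacks this, including the orthogonality $(\tau\otimes Id)(a_k^*a_l)=0$ for $k\neq l$ that identifies the trace term of $y^*y$ with that of $x^*x$.
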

 
\begin{proof}
Apply Theorem \ref{improvedfree} twice we get (i).  (ii) follows immediately because $\phi$ is one to one. 
 \end{proof}

\begin{rem}
All results before this subsection hold for free groups with $L_k,
R_k$ replaced by $L_{g_k}$ (resp. $L_{g^{-1}_k}$ or $L_{g^\pm_k}$) and
$R_{g_k}$ (resp. $R_{g^{-1}_k}$ or $R_{g^\pm_k}$ ). We can strengthen
some of them. These will be recorded in the following.
\end{rem}

Given $g,h$ reduced word of $\F_\infty$, we write $g\leq h$ (or $h\geq
g$) if $h=gk$ with $g,h,k$ reduced words, i.e. $|g^{-1}h|=|h|-|g|$. We
write $g\nleqslant h$ otherwise. Let 
\begin{eqnarray*}
\mathcal L_h:=\{g\in \F_\infty, g\geq
h\}
\end{eqnarray*}
 and $L_h$ the associated $L^2$-projection, this is compatible with our previous notation.

\begin{corollary}
\label{Lh}
 For any $1<p<\infty$, $h\in \F_\infty$ and $x\in L^p(\hat \F_\infty)\otimes L^p(\m M)$ ,  
 \[  \| L_{h}x\|_p \leq{\frac{ c_{p}+1}2} \| x\|_{L^{p}}.\]
 Moreover, $\lim_{|h|\to \infty} \|L_hx\|_p\to 0$.  
\end{corollary}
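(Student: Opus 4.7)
My plan is to reduce the bound for $L_h$, with $h \in \F_\infty$ an arbitrary reduced word of length $n \geq 1$ (the case $h = e$ is trivial since $L_e = id$ and $c_p \geq 1$), to a single-generator projection, for which Theorem \ref{main2} yields the estimate exactly in the same way as Corollary \ref{cor1}. The key identity I would establish is
\begin{equation*}
L_h = \lambda(h)\,(id - L_{g_{-i_n}})\,\lambda(h^{-1}),
\end{equation*}
where $h = g_{i_1} g_{i_2} \cdots g_{i_n}$ is the reduced expression. To verify it on basis vectors $\delta_g$, one uses $\lambda(h^{-1})\delta_g = \delta_{h^{-1}g}$ together with the observation that the reduced form of $h^{-1}g$ either equals $e$ (precisely when $g = h \in \m L_h$) or is a nonempty word whose first letter is $g_{-i_n}$ exactly when $g \notin \m L_h$; this is just the statement that $g = h\cdot(h^{-1}g)$ is a reduced product iff the reduced form of $h^{-1}g$ does not begin with $g_{-i_n}$. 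Applying $id - L_{g_{-i_n}}$ and then $\lambda(h)$ therefore reproduces $L_h\delta_g$.

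Since $\lambda(h)\otimes id$ acts isometrically on $L^p(\hat\F_\infty)\otimes L^p(\m M)$, the identity yields $\|L_h x\|_p \leq \|id - L_{g_{-i_n}}\|_{L^p \to L^p}\,\|x\|_p$. To estimate the right-hand factor, I would apply Theorem \ref{main2} with $\varepsilon_{-i_n} = -1$ and $\varepsilon_k = 1$ for every other $k$ (including $k = 0$): as $\tau + \sum_{k\in\Z^*} L_{g_k} = id$, the corresponding operator is $H_\varepsilon = id - 2L_{g_{-i_n}}$, so $id - L_{g_{-i_n}} = (id + H_\varepsilon)/2$ has norm at most $(1+c_p)/2$. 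This delivers the first inequality, and the same argument runs in the completely bounded setting.

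For the convergence $\|L_h x\|_p \to 0$ as $|h|\to\infty$, I would argue by density: on the algebraic span of elementary tensors $\lambda(g)\otimes y$ with $g \in \F_\infty$ and $y \in L^p(\m M)$, the operator $L_h$ vanishes identically once $|h|$ exceeds the maximal word-length in the finite support, and combined with the uniform bound $\|L_h\|\leq (1+c_p)/2$ just proved, a standard $\varepsilon/3$-argument takes care of arbitrary $x$. I do not anticipate any real obstacle here; the entire proof rests on the one structural observation that conjugation by $\lambda(h^{\pm 1})$ converts the property ``begins with the word $h$'' into the property ``does not begin with the single letter $g_{-i_n}$''.
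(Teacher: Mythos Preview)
Your proof is correct and follows essentially the same approach as the paper: the paper writes $h=h'g_{i_n}$ and uses the identity $L_h=\lambda_{h'}L_{g_{i_n}}\lambda_{h'^{-1}}$, whereas you conjugate by $\lambda_h$ instead of $\lambda_{h'}$ to obtain $L_h=\lambda_h(id-L_{g_{-i_n}})\lambda_{h^{-1}}$; since $\lambda_{g_{i_n}}(id-L_{g_{-i_n}})\lambda_{g_{i_n}^{-1}}=L_{g_{i_n}}$ these are the same identity, and both reduce to the $(1+c_p)/2$ bound for a single-generator projection via Theorem~\ref{main2}. Your density argument for the limit is likewise the same as the paper's.
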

\begin{proof}
 Without loss of generality, we may assume $h\in \mathcal R_{g_1}$ and
 $h=h'g_1$.  Then $L_h x= \la_{h'}L_{g_1} (\la_{h'^{-1}}x)$. The $L^p$
 bounds follows from Theorem \ref{main2}.  Note the $L^p$ space is
 defined as the closure of $C_c(\F_\infty)$, we get the convergence by
 the uniformly boundedness of $L_h$ on $L^p$.
 \end{proof}

\begin{corollary}
  \label{J.P}
For any $1<p<\infty$, any sequences  $(h_k)\in \F_\infty\setminus\{e\}$  and $(x_k)\in  L^p(\ell_2^c)$, we have 
 \begin{eqnarray}
  \|(\sum_{k=1}^{\infty }|L_{h_k}x_k|^2)^{\frac12}\|_{p}  \leq {c_{p}} \| (\sum_{k=1}^\infty|x_k|^2)^\frac12\|_{{p}}. \label{jpklcf}
  \end{eqnarray}
  \end{corollary}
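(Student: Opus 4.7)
The plan is to reduce the desired column-valued bound to the generator case, using the left-translation trick from the proof of Corollary \ref{Lh}. For each $k$, write the non-trivial reduced word as $h_k = h_k'\, g_{i_k}$, where $g_{i_k}$ is the last letter of $h_k$ (so $i_k \in \Z^*$) and $|h_k'| = |h_k|-1$. The same reduced-word computation as in Corollary \ref{Lh} then gives the pointwise identity
$$L_{h_k}x_k = \lambda_{h_k'}\, L_{g_{i_k}}\bigl(\lambda_{h_k'^{-1}}x_k\bigr),$$
valid on the generating elements $\lambda_g$ (and hence on all of $L^p$ by linearity and density).

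Next I set $y_k = \lambda_{h_k'^{-1}} x_k$ and exploit unitary invariance of column norms: since $\lambda_{h_k'}$ and $\lambda_{h_k'^{-1}}$ are unitaries, $|L_{h_k}x_k|^2 = |L_{g_{i_k}} y_k|^2$ and $|y_k|^2 = |x_k|^2$. Hence
$$\Big\|\Big(\sum_k |L_{h_k}x_k|^2\Big)^{1/2}\Big\|_p = \Big\|\Big(\sum_k |L_{g_{i_k}} y_k|^2\Big)^{1/2}\Big\|_p, \qquad \Big\|\Big(\sum_k |x_k|^2\Big)^{1/2}\Big\|_p = \Big\|\Big(\sum_k |y_k|^2\Big)^{1/2}\Big\|_p.$$

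It remains to establish the inequality
$$\Big\|\Big(\sum_k |L_{g_{i_k}} y_k|^2\Big)^{1/2}\Big\|_p \leq c_p \Big\|\Big(\sum_k |y_k|^2\Big)^{1/2}\Big\|_p$$
for $i_k \in \Z^*$, which is the free-group analogue of \eqref{klcf}. This is obtained by repeating verbatim the Rademacher-averaging proof of Corollary \ref{JPXf}, with Theorem \ref{main2} in place of Theorem \ref{main}: apply Theorem \ref{main2} to $\sum_l \varepsilon_{i_l}\, y_l \otimes c_l \in L^p(\hat\F_\infty) \otimes S_p$ with Rademacher signs $\varepsilon_k$, $k \in \Z^*$, and then take the expectation in the $\varepsilon_k$ to isolate the diagonal terms $k = i_l$.

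The only delicate point is the verification of the translation identity in the first step (which is really a reduced-word check: the reduced form of $h_k'^{-1} g$ starts with $g_{i_k}$ iff $g$ starts with $h_k$, using that $h_k = h_k' g_{i_k}$ is reduced). Everything else is unitary invariance of column norms together with the averaging argument already employed for Corollary \ref{JPXf}, so I do not anticipate any substantial obstacle.
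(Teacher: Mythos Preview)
Your proof is correct and follows essentially the same route as the paper: the same decomposition $h_k = h_k' g_{i_k}$, the same translation identity $L_{h_k}x_k = \lambda_{h_k'} L_{g_{i_k}}(\lambda_{h_k'^{-1}}x_k)$, and the same reduction to the free-group version of Corollary~\ref{JPXf}. Your write-up is in fact more detailed than the paper's, spelling out the unitary invariance and the Rademacher-averaging step explicitly.
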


\begin{proof} Let us assume such that $h_{k}\in {\cal R}_{g_{i_k} }, i_k\in \Z$.  Assume 
  $h_k=h_k'g_{i_k} $. 
Then $$L_{h_k} x_k= \la_{h_k'}L_{g_{i_k} } (\la_{h_k'^{-1}}x_k).$$ 
So $$\sum_{k=1}^{\infty }|L_{h_k}x_k|^2=\sum_{k=1}^{\infty }|L_{g_{i_k} } (\la_{h_k'^{-1}}x_k)|^2.$$
We get the result  by the free group version of Corollary \ref{JPXf}. 
\end{proof}
\bigskip

\subsection{A length reduction formula}
Let $\m W_{\geq d}$ be the set of word in $\F_\infty$ of length
greater than $d$, also denote by $W_{\geq d}$ the subspace in $L^p$
generated by $\lambda_w,\,w\in\m W_{\geq d}$.  For $w\in \F_\infty$,
we let $w_l$ denote its $l$-th letter (if it exists) and $\partial
w=w_1^{-1}w$.

Take any $x=\sum_{w\in \m W_{\geq 1}} x_w \lambda_w \in W_{\geq 1}$, we have 
$$\|(\sum_{h \in S} |L_h(x)|^2)^{\frac 12}\|_p=\| \sum_{w\in \m W_{\geq 1}} x_w \lambda_w \tens c_{w_1}\|_p=\| \sum_{w\in \m W_{\geq 1}} x_w \lambda_{\partial w} \tens c_{w_1}\|_p$$

At the operator space level, Theorem \ref{improvedfree} means that the
map $\iota: W_{\geq d} \to C_p\tens W_{\geq d-1}\oplus R_p$ given by
$\iota(\lambda_w)=\lambda_{\partial w}\otimes c_{w_1} \oplus r_w$ is a
complete isomorphism. Iterating, we obtain a complete isomorphism for $2<p<\infty$
$$\iota_d : \left\{\begin{array}{lcl} W_{\geq d} &\to& C_p^{\tens
  d}\tens L_p(\hat \F_\infty) \oplus C_p^{\tens d-1}\tens R_p \oplus
...\oplus C_p\tens R_p \oplus R_p \\ \lambda_{w} & \mapsto&
c_{w_1,...,w_d} \tens \lambda_{\partial^dw} \oplus c_{w_1,...,w_{d-1}}
\tens r_{\partial^{d-1}w}\oplus ... \oplus c_{w_1}\tens r_{\partial w}
\oplus r_{w}
\end{array}\right.$$
Let us state this as a Corollary, which   generalizes  the result of \cite{PP05}. 
\begin{corollary}(Length reduction formula) \label{PP} For any $d\geq 1$, $\iota_d$ extends to a completely isomorphism such that for $x\in   W_{\geq d}$, $2\leq p<\infty$, $$\beta_p^{-d}\|x\|_p\leq \|\iota_d x\|\leq (\sqrt 2 c_p)^d \|x\|_p,$$
for all $x\in L^p(\hat\F_\infty)$.
\end{corollary}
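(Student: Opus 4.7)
The approach is to iterate Theorem~\ref{improvedfree}, which gives the $d=1$ case, and multiply the resulting constants $d$ times.

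\emph{Base case $d=1$.} For $x=\sum_{|w|\geq 1} x_w\lambda_w\in W_{\geq 1}$, one has $L_e(x)=\tau(x)=0$, so $\sum_{|h|\leq 1}|L_h(x)|^2=\sum_{h\in S}|L_h(x)|^2$. By the identity recorded immediately before the corollary, this column square-function has the same $L^p$-norm as $\|\sum_w x_w\lambda_{\partial w}\otimes c_{w_1}\|_p$ inside $C_p\otimes L^p(\hat\F_\infty)$; on the other hand, $\|(\tau\otimes Id)(xx^*)\|_{p/2}^{1/2}=\|\sum_w x_w r_w\|_{R_p}$. Interpreted at the completely bounded level (via the remark that follows Theorem~\ref{main}), Theorem~\ref{improvedfree} thus states precisely that the map
$$\iota_1:\lambda_w\mapsto c_{w_1}\otimes\lambda_{\partial w}\oplus r_w$$
is a complete isomorphism from $W_{\geq 1}$ onto its image with lower constant $\beta_p^{-1}$ and upper constant $\sqrt 2 c_p$.

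\emph{Iteration.} If $|w|\geq d$, then $|\partial w|\geq d-1$, so $\iota_1$ restricts to a complete embedding $W_{\geq d}\hookrightarrow (C_p\otimes W_{\geq d-1})\oplus R_p$. Applying $\mathrm{Id}_{C_p}\otimes\iota_1$ to the column summand — which is legitimate because Theorem~\ref{improvedfree} is vector-valued over any finite von Neumann algebra $\m M$ and its cb version follows from the matrix-amplification trick in the remark after Theorem~\ref{main} — we obtain a complete isomorphism onto $(C_p^{\otimes 2}\otimes W_{\geq d-2})\oplus (C_p\otimes R_p)\oplus R_p$, identifying $c_{w_1}\otimes c_{w_2}$ with $c_{w_1,w_2}$. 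After $d$ iterations, the column factor is exhausted and we reach exactly the direct sum in the statement, with $\iota_d(\lambda_w)$ unfolding as
$$c_{w_1,\ldots,w_d}\otimes\lambda_{\partial^d w}\,\oplus\,c_{w_1,\ldots,w_{d-1}}\otimes r_{\partial^{d-1}w}\,\oplus\,\cdots\,\oplus\,c_{w_1}\otimes r_{\partial w}\,\oplus\, r_w.$$
Each step multiplies the upper bound by $\sqrt 2 c_p$ and the lower bound by $\beta_p^{-1}$, while the previously peeled-off row summands are carried along isometrically (the identity of $C_p^{\otimes k}$ is a complete contraction on both sides). After $d$ steps this yields $\beta_p^{-d}\|x\|_p\leq\|\iota_d x\|\leq(\sqrt 2 c_p)^d\|x\|_p$, as claimed.

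\emph{Main obstacle.} The only nontrivial point is the bookkeeping that ensures Theorem~\ref{improvedfree} can be invoked at each iteration in the presence of the accumulated column-tensor factors; this is where the vector-valued formulation of that theorem over $L^p(\hat\F_\infty)\otimes L^p(\m M)$, together with operator-space freeness of the tensor factor $C_p^{\otimes k}$ from the free-group coordinates, is essential. Beyond this, the proof is a purely mechanical iteration.
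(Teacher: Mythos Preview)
Your proof is correct and follows exactly the approach the paper indicates in the paragraph preceding the corollary: interpret Theorem~\ref{improvedfree} as the statement that $\iota$ is a complete isomorphism with constants $\beta_p^{-1}$ and $\sqrt 2 c_p$, then iterate $d$ times using the vector-valued form of that theorem. You have simply made explicit the bookkeeping (that the previously peeled-off row summands are carried along isometrically under the max-direct-sum, and that the cb version is available via matrix amplification) which the paper leaves implicit.
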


Fix some $d\in \N$
and  any reduced word $w=w_1...w_n$ in the generators,  we define: 
$$L_h^{(d)}(\la_w)= \delta_{w_k=h}\la_w,\qquad   \textrm{and} \qquad
H_\eps^{(d)}=\eps_e P_{d-1}+\sum _{h\in S} \eps_hL_h^{(d)},$$
for any choice of  $\eps_h, |h|\leq 1$ with $|\eps_h|\leq 1$.
Recall that by  \cite{RX06} or \cite{JPX07}, $P_{d-1}$ is completely bounded  on $L^p$  (this also follows from Theorem \ref{improvedfree}). Note that 
$$\|\iota_{d-1} H_{\eps(1)}^{(1)}H_{\eps(2)}^{(2)}\cdots H_{\eps(d)}^{(d)}x\|=\|H_{\eps(d)} \iota H_{\eps(d-1)} \iota\cdots \iota H_{\eps(1)}x\|.$$  We get immediately
\begin{thm}\label{hn}
For any $d\geq 1$ and $x\in L^p(\hat\F_\infty)\otimes L^p(\m \M), 1<p<\infty$, $$\|H_{\eps(1)}^{(1)}H_{\eps(2)}^{(2)}\cdots H_{\eps(d)}^{(d)} x\|_p\simeq^{c_{p,d}} \|x\|_p$$ 
with $c_{p,d}\leq {(\sqrt 2c_p) ^{2d-1}\beta_p^{d-1}}\lesssim c_p^{5d-4}$ and $\|H_{\eps}^{(d)} x\|_p\simeq^{(\sqrt 2c_p) ^d\beta_p^{d-1}} \|x\|_p$ for any choice of $|\eps_k|= 1$. 
\end{thm}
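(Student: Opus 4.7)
The plan is to iterate the length reduction isomorphism $\iota$ from Corollary~\ref{PP} in order to conjugate the composition $H^{(1)}_{\eps(1)}\cdots H^{(d)}_{\eps(d)}$ into a single genuine Hilbert transform $H_{\eps(d)}$ tensored with contractive multipliers; the base case $d=1$ is exactly Theorem~\ref{main2}, and I induct on $d$.

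The main input is the intertwining of $\iota\colon W_{\geq d}\to W_{\geq d-1}\tens C_p\oplus R_p$ with the $H^{(k)}_\eps$'s. Using $\iota(\la_w)=\la_{\partial w}\tens c_{w_1}\oplus r_w$ together with the identity $(\partial w)_{k-1}=w_k$, a direct check on elementary tensors yields, on the main summand $W_{\geq d-1}\tens C_p$,
\[
\iota \, H^{(1)}_{\eps} = (I\tens T_{\eps})\,\iota, \qquad
\iota \, H^{(k)}_{\eps} = (H^{(k-1)}_\eps\tens I)\,\iota \text{ for } k\geq 2,
\]
where $T_\eps c_h := \eps_h c_h$ is a contractive diagonal multiplier on $C_p$; on the $R_p$ summand $\iota\, H^{(k)}_\eps$ is the diagonal $r_w\mapsto \eps_{w_k}r_w$, also contractive. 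Iterating this $d-1$ times (the map $\iota_{d-1}$), the full composition is conjugated on the main component $L^p\tens C_p^{\tens(d-1)}$ to
\[
H_{\eps(d)}\tens T_{\eps(d-1)}\tens \cdots \tens T_{\eps(1)},
\]
and on each side component $C_p^{\tens k}\tens R_p$ ($0\leq k\leq d-2$) to a contractive diagonal. By Theorem~\ref{main2} this conjugated operator has norm $\leq c_p$. Combining with the Banach--Mazur bounds $\|\iota_{d-1}\|\leq(\sqrt 2c_p)^{d-1}$ and $\|\iota_{d-1}^{-1}\|\leq\beta_p^{d-1}$ of Corollary~\ref{PP} yields
\[
\|H^{(1)}_{\eps(1)}\cdots H^{(d)}_{\eps(d)}x\|_p\leq \beta_p^{d-1}\, c_p\, (\sqrt 2 c_p)^{d-1}\,\|x\|_p
\]
for $x\in W_{\geq d}$.

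For arbitrary $x\in L^p$, I split $x=P_{d-1}x+x_{\geq d}$: on $P_{d-1}L^p$ each $H^{(k)}_{\eps(k)}$ with $k\geq d$ acts as the scalar $\eps(k)_e$ (because $L_h^{(k)}$ vanishes on words of length $<k$), so the product degenerates to $\eps(d)_e\, H^{(1)}_{\eps(1)}\cdots H^{(d-1)}_{\eps(d-1)}$ and is handled by the inductive hypothesis together with the complete boundedness of $P_{d-1}$ (a consequence of Corollary~\ref{PP}). The equivalence when $|\eps(k)_h|=1$ is immediate, since each $H^{(k)}_{\eps(k)}$ is then $L^2$-unitary, inverted pointwise by $H^{(k)}_{\overline{\eps(k)}}$, so applying the upper bound to the inverse furnishes the lower bound. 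The main point I expect to need care with is verifying the intertwining on the $R_p$ summand, where no free-product structure is available; but since the $r_w$'s form a basis on which $H^{(k)}_\eps$ must act diagonally with entry $\eps_{w_k}$, the desired contraction is automatic.
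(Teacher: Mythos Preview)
Your proposal is correct and follows essentially the same route as the paper. The paper's entire proof is the single displayed identity
\[
\|\iota_{d-1} H_{\eps(1)}^{(1)}H_{\eps(2)}^{(2)}\cdots H_{\eps(d)}^{(d)}x\|=\|H_{\eps(d)} \iota H_{\eps(d-1)} \iota\cdots \iota H_{\eps(1)}x\|,
\]
which encodes exactly the intertwining $\iota\, H^{(k)}_\eps=(H^{(k-1)}_\eps\tens I)\,\iota$ (for $k\ge 2$) and $\iota\, H^{(1)}_\eps=(I\tens T_\eps)\,\iota$ that you spell out; your treatment is simply more explicit about the block-diagonal action on the $C_p^{\tens k}\tens R_p$ side summands and about the $P_{d-1}$ splitting, both of which the paper leaves implicit. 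One small correction: the complete boundedness of $P_{d-1}$ is not a consequence of Corollary~\ref{PP} itself but rather of Theorem~\ref{improvedfree} (or the references cited just before the statement of Theorem~\ref{hn}).
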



\medskip

We give a faster argument for the boundedness of $H_\eps^{(d)}$.
 Consider, $\varepsilon_h=\pm1$ for $h\in \F_\infty$,  let
$$ H_\eps^{Ld} =\eps_eP_{d-1}+\sum _{h\in \F_\infty, |h|= d} \varepsilon_h L_h,\ \ \ H_{\eps}^{Rd}
=\eps_eP_{d-1}+\sum _{h\in \F_\infty, |h|= d} \varepsilon_{h^{-1}} R_h.$$
Recall that $L_h$ (resp. $R_h$) is defined as the projection onto the set of all reduced words starting (resp. ending) with $h$.
We get $H_\eps^{(d)}$ from $H_\eps^{Ld}$ if $\eps_h$ depends only on the $d$-th letter of $h$.
\begin{corollary}
  \label{d}
For any $1< p<\infty$,   we have for any $x\in L^p(\hat\F_\infty) $,
 \begin{eqnarray}
  \| x\|_p\simeq  \|H_\eps^{Ld} x\|_p\simeq \|(L_h x )_{|h|=d}\|_{L^p(\hat\F_\infty,\ell_2^{cr} )}
  \end{eqnarray}
\end{corollary}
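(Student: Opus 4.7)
The two equivalences in Corollary~\ref{d} both follow once one establishes the single uniform upper bound $\|H_\eps^{Ld}x\|_p \lesssim_p \|x\|_p$ for $\eps$ with $\eps_h=\pm 1$. The reverse inequality $\|x\|_p \lesssim \|H_\eps^{Ld}x\|_p$ is automatic because $(H_\eps^{Ld})^2 = \eps_e^2 P_{d-1}+\sum_{|h|=d}\eps_h^2 L_h = \mathrm{id}$, so $H_\eps^{Ld}$ is its own inverse. The equivalence with $\|(L_hx)_{|h|=d}\|_{L^p(\hat\F_\infty,\ell_2^{cr})}$ comes from averaging $\|H_\eps^{Ld}x\|_p$ over Rademacher $\eps_h$ and applying the noncommutative Khintchine inequality~(\ref{khin}): one gets $E_\eps \|H_\eps^{Ld}x\|_p \simeq \|(L_hx)_{|h|=d}\|_{L^p(\ell_2^{cr})} + \|P_{d-1}x\|_p$, and the last term is absorbed using $L^p$-boundedness of $P_{d-1}$ (e.g.\ from Theorem~\ref{improvedfree} or~\cite{RX06}).

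For the main upper bound I plan to use the length reduction formula (Corollary~\ref{PP}). Split $x = P_{d-1}x + y$ with $y \in W_{\geq d}$; the short part is trivially controlled. On $y$ the operator $\sum_{|h|=d}\eps_h L_h$ sends $\lambda_w$ to $\eps_{(w_1,\ldots,w_d)}\lambda_w$, so under the complete isomorphism $\iota_d$ it becomes scalar multiplication by $\eps_{(w_1,\ldots,w_d)}$ on each component of $\iota_d(\lambda_w)$ in
\[ X_d = C_p^{\tens d}\tens L^p(\hat\F_\infty) \oplus \bigoplus_{j=0}^{d-1} C_p^{\tens j}\tens R_p. \]
On $C_p^{\tens d}\tens L^p$ the multiplier depends only on the $d$ column indices, giving a contractive column-diagonal operator. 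On the $j=0$ summand $R_p$ the multiplier depends on $w$ only through its first $d$ letters, giving a contractive row-diagonal operator. For each intermediate $C_p^{\tens j}\tens R_p$ with $1\leq j\leq d-1$, the multiplier depends on the $j$ column indices and on the first $d-j$ letters of the row index $\partial^j w$; I would partition the row-index set by those first $d-j$ letters to obtain an $\ell_2^r$ row-decomposition $R_p = \bigoplus_{\vec g} R_p^{(\vec g)}$, on each block of which $C_p^{\tens j}\tens R_p^{(\vec g)}$ carries a pure column-Schur multiplier of norm $\leq 1$.

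The main obstacle I anticipate is combining these blockwise contractive bounds into a single bound on $C_p^{\tens j}\tens R_p$: the row decomposition is $\ell_2$-orthogonal, but its interaction with the column factor at the operator-space level must be checked carefully. A cleaner alternative, if this becomes delicate, is induction on $d$. The base $d=1$ is Theorem~\ref{main2}. For the inductive step, decompose by first letter
\[ \sum_{|h|=d}\eps_h L_h x = \sum_{g\in S} \lambda_g \Bigl(\sum_{|h'|=d-1}\eps_{gh'}\widetilde L_{h'}\Bigr)(\lambda_{g^{-1}} L_g x), \]
where $\widetilde L_{h'}$ denotes the level-$(d-1)$ starting-prefix projection acting on the subspace of words not starting with $g^{-1}$ (with $\eps_{gh'}=0$ when $gh'$ is not reduced). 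The inductive hypothesis bounds the inner operator uniformly in $g$, while Theorem~\ref{main2} applied to the outer first-letter decomposition controls the total $L^p$-norm of $\sum_g \lambda_g(\cdots)$ by $\|x\|_p$, yielding the required estimate.
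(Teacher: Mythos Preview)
Both of your proposed routes share a hidden flaw: if either one worked as written, it would actually prove that $H_\eps^{Ld}$ is \emph{completely} bounded on $L^p(\hat\F_\infty)$. But this is false for $d\geq 2$ and $p\neq 2$ --- the paper states this explicitly in the Remark immediately following Corollary~\ref{d}. So something must break in each argument.

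\medskip

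\textbf{First approach (length reduction).} The intermediate summand $C_p^{\tens j}\tens R_p$ is completely isometric to a Schatten class $S_p$, with row index $I=(w_1,\dots,w_j)$ and column index $\partial^j w$. The conjugated operator there is the Schur multiplier $a_{I,\partial^j w}\mapsto \eps_{I\cdot(w_{j+1}\cdots w_d)}\,a_{I,\partial^j w}$. Already for $d=2$, $j=1$ and $|w|=2$ this is an arbitrary $\pm1$ Schur multiplier on $S_p$, which is unbounded for $p\neq 2$. Your row-block decomposition $R_p=\bigoplus_{\vec g} R_p^{(\vec g)}$ does not help: each block indeed carries a contractive column multiplier, but the multipliers differ from block to block, and recombining \emph{different} column multipliers across a column-partition of $S_p$ is precisely the general Schur multiplier problem. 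The upshot is that the $j$-th summand of $\iota_d(H_\eps^{Ld}x)$ cannot be controlled by the $j$-th summand of $\iota_d(x)$ alone; the boundedness of $H_\eps^{Ld}$ is a statement about the range of $\iota_d$, not about the ambient $X_d$.

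\medskip

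\textbf{Second approach (induction on $d$).} Your inductive step produces $\sum_{g\in S}\lambda_g\,T_g(\lambda_{g^{-1}}L_gx)$ with each $T_g$ an $H^{L(d-1)}$-type operator. To pass from this to $\|x\|_p$ via Theorem~\ref{main2}/Corollary~\ref{corgen} one must bound the diagonal map $(z_g)\mapsto(T_gz_g)$ on $L^p(\ell_2^c)$ and $L^p(\ell_2^r)$, which is exactly a uniform \emph{complete} boundedness estimate for the $T_g$. Since $H^{L(d-1)}_\eps$ is not cb for $d-1\geq 2$, the induction collapses at the step $d=2\to d=3$. (It does work from $d=1$ to $d=2$, because $H_\eps^{L1}$ is cb by Theorem~\ref{main2}.)

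\medskip

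\textbf{What the paper does instead.} The paper's argument is designed to avoid any cb ingredient. One checks that the Cotlar identity~\eqref{cotlar} still holds for $H_\eps^{Ld}$ and $H_\eps^{Rd}$ provided the product $g^*h$ has length $\geq 2d-1$; this yields
\[
P_{2d-2}^\bot\bigl[(H_\eps^{Ld}x)(H_\eps^{Ld}x)^*\bigr]
= P_{2d-2}^\bot\bigl[H_\eps^{Ld}(xH_\eps^{Rd}x^*)+H_\eps^{Rd}(H_\eps^{Ld}(x)x^*)-H_\eps^{Rd}H_\eps^{Ld}(xx^*)\bigr].
\]
The complementary piece, supported on words of length $\leq 2d-2$, is handled by the \emph{scalar} Haagerup inequality (equivalence of $L^p$ and $L^2$ norms there with constant $(2d-1)^{1-2/p}$). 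Bootstrapping $L^p\to L^{2p}$ as in Theorem~\ref{main} then gives a recursion $c_{2p,d}\lesssim c_{p,d}+(2d-1)^{1-2/p}$, whence boundedness with $c_{p,d}\simeq d^{1-2/p}$ for fixed $p$. The reliance on the scalar Haagerup inequality is exactly why this argument gives boundedness but not complete boundedness --- and why your approaches, which use only cb-stable tools, cannot succeed.
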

\begin{proof}
 Note that  a similar identity to (\ref{cotlar}) holds for free groups with $H_\eps^{Ld}$ and any $g,h$ with $|g^{-1}h|\geq {2d-1}$. We then have that
 \begin{eqnarray}\label{haag1p}
 &&P^\bot_{  2d-2}[H_\eps^{Ld} x(H_\eps^{Ld} x)^*]\nonumber\\&=& P_{ 2d-2}^\bot[H_\eps^{Ld}
  (xH_\eps^{Ld} (x^*))+H_\eps^{Ld}   (H_\eps^{Ld} (x)x^*)-H^{Rd }_\eps H_\eps^{Ld}
  (xx^*)].
 \end{eqnarray}
 Let $c_{p,d}, p\geq 2$ be the best constant $c$ so that $\|H_\eps^{Ld}x\|_p\leq c\|x\|_p$. Recall that by the Haagerup inequality, the $L^1$ and $L^p$ norms are 
equivalent on the range of $P_{  2d-2}$:
 \begin{eqnarray*}
 \|P_{2d-2}[H_\eps^{Ld} x(H_\eps^{Ld} x)^*]\|_{p}\leq (2d-1)^{2-\frac 2p}\|H_\eps^{Ld} x(H_\eps^{Ld} x)^*\|_{1}\leq  (2d-1)^{2-\frac 2p}\| x\|^2_{2p},\\
 \|P_{2d-2}[H_\eps^{Ld}(xH_\eps^{Ld} (x^*))]\|_{p}\leq (2d-1)^{1-\frac 2p}\|H_\eps^{Ld}
  (xH_\eps^{Ld} (x^*))\|_{2}\leq  (2d-1)^{1-\frac 2p}c_{2p}\|  x\|^2_{2p}
 \end{eqnarray*}
for any $p>2$. Therefore,   $$c^2_{2p,d}\leq 2(2d-1)^{2-\frac2p}+2c_{2p,d}(2d-1)^{1-\frac2p}+2c_{p,d}c_{2p,d}+c_{p,d}^2.$$  
We then have 
$$c_{2p,d}\leq (2d-1)^{1-\frac2p}+c_{p,d}+\sqrt2( c_{p,d}+3(2d-1)^{1-\frac1p}).$$
Asymptotically  $c_{p,d}\simeq p^ \frac{\ln(1+\sqrt 2)}{\ln 2}$ for $d$ given and $c_{p,d}\simeq d^{1-\frac 2p}$ for $p$ given. So 
$$\|H_\eps^{Ld} x\|_{{p}}\leq c_{p,d} \|  x\|_{{p}}.$$
Since $H_\eps^{Ld}H_\eps^{Ld}=id$, we get the equivalence. The $1<p<2$ case follows by duality.
 \end{proof}

\begin{rem}
A straight forward c.b. version of Corollary \ref{d} is false for $\F_\infty$ (true for $\F_n$ with a constant depending on $n$ though). This is because the
operator valued Haagerup inequality is an equivalence between the
$L^p$ norm and the more complicated norm given by Corollary \ref{PP}. For instance it yields that the
set $\{\lambda(g_ig_j)\}$ is not completely unconditional, this would be a 
direct consequence of Corollary \ref{d}. 

\end{rem}

For any $x\in {\cal L}(\F_n), n<\infty$ and any choice of signs $H_\varepsilon
x$ can be viewed as an unbounded operator on $L^2(\hat \F_n)$ with
domain $C_c(\F_n)$. As usual $\mathbb K$ stands for the compact operators. Ozawa asked in \cite{O10} whether the commutator $[R_h,x]$ sends the unit ball of ${\cal L}(\F_n)$ into a compact set of $L^2(\hat\F_n)$ for any $h\in \F_n$ and $x\in {\cal L}(\F_n)$ and pointed out that the $L^p$-boundedness of $R_h$ implies a positive answer. We record a general result in the following corollary.
 
\begin{corollary}
  \label{Ozawa}
   We have for $d\in \N$ and  any choice of signs  $\varepsilon$
 \begin{enumerate}[(i)]
  \item $[H^{Rd}_\eps, x]\in B(L^2(\hat \F_n))$ 
  if  $x=x_1+H_{\varepsilon'}^{Ld} x_2$ for some $|\eps'|\leq 1, x_1,x_2\in {\cal L}(\F_n)$.
  \item $[H^{Rd}_\eps, x]\in {\mathbb K}(L^2(\hat \F_n))$ 
  for all  $x=x_1+H^{Ld}_{\varepsilon'} x_2$ for some   $|\eps'|\leq 1, x_1,x_2\in C_\la^*(\F_n)$. 
  \item $[H^{Rd}_\eps, x] $ maps the closed unit ball of ${\cal L}(\F_n)$ into a compact subset of $L^2(\hat\F_n)$ 
  if   $x\in L_{p}(\hat\F_n) $ for some $p>2$ (in particular, if $x\in {\cal L}(\F_n)$).
  \end{enumerate}
\end{corollary}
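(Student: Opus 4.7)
The plan is to establish (i) from the Cotlar-type identity of Corollary~\ref{d}, then deduce (ii) and (iii) by approximation. The easy half of (i), $[H^{Rd}_\eps,x_1]\in B(L^2(\hat\F_n))$, is automatic: $H^{Rd}_\eps$ is an $L^2$-isometry when $|\eps_h|=1$ (since $P_{d-1}$ and the $R_h$ with $|h|=d$ decompose the identity orthogonally) and $x_1\in\mathcal L(\F_n)$ acts boundedly on $L^2$. The content of (i) lies in the piece $[H^{Rd}_\eps, H^{Ld}_{\eps'}(x_2)]$: since $H^{Ld}_{\eps'}(x_2)$ lies in every $L^p$ with $p<\infty$ but not in $\mathcal L(\F_n)$, the commutator is a priori defined only on $C_c(\F_n)\subset L^2$, and its boundedness must come from an algebraic identity.

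For this piece I would apply the polarized Cotlar identity~\eqref{cotlar} (valid modulo a $P_{2d-2}$-correction in the setting of Corollary~\ref{d}) to $x=x_2$, $y^*=\xi$ for $\xi\in C_c(\F_n)$, with appropriate choices of signs, and rearrange to
\[
[H^{Rd}_\eps, H^{Ld}_{\eps'}(x_2)]\,\xi \;=\; H^{Rd}_\eps H^{Ld}_{\eps'}(x_2\xi) \;-\; H^{Ld}_{\eps'}(x_2\,H^{Rd}_\eps\xi) \;+\; P_{2d-2}[\cdots].
\]
The first two terms are $L^2$-bounded by $\|x_2\|_\infty\|\xi\|_2$ via the isometry property. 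The low-length correction contains the $L^1$-product $(H^{Ld}_{\eps'}x_2)(H^{Rd}_\eps\xi)$ of norm $\le\|x_2\|_\infty\|\xi\|_2$ (Cauchy--Schwarz); since $P_{2d-2}L^2$ is finite-dimensional when $n<\infty$, the Haagerup-type duality inequality $\|P_{2d-2}a\|_2\lesssim_{n,d}\|a\|_1$ delivers the matching $L^2$-bound, yielding $\|[H^{Rd}_\eps, H^{Ld}_{\eps'}(x_2)]\|_{B(L^2)}\lesssim\|x_2\|_\infty$.

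For (ii), approximate $x_1,x_2\in C^*_\lambda(\F_n)$ by trigonometric polynomials $x_j^{(k)}$ in operator norm. The crucial observation is that for a single generator $\lambda_g$, both $[R_h,\lambda_g]$ and $[P_{d-1},\lambda_g]$ vanish on $\delta_w$ whenever $|w|\geq|g|+d$, because the reduction of $gw$ cannot alter letters beyond the first $|g|$; these commutators therefore factor through the finite-dimensional space $P_{|g|+d-1}L^2$ and are finite-rank. Consequently $[H^{Rd}_\eps,x_1^{(k)}]$ and $[H^{Rd}_\eps,H^{Ld}_{\eps'}(x_2^{(k)})]$ are finite-rank, while the bound from (i) scales linearly in $\|x_j-x_j^{(k)}\|_\infty$. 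The full commutator is thus a norm limit of finite-rank operators, hence compact.

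For (iii), approximate $x\in L^p(\hat\F_n)$ with $p>2$ by polynomials $x^{(k)}\in\mathcal L(\F_n)$ in $L^p$-norm, and fix $q$ with $1/p+1/q=1/2$ (finite since $p>2$). H\"older's inequality together with the $L^2$-isometry and $L^q$-boundedness of $H^{Rd}_\eps$ gives
\[
\|[H^{Rd}_\eps, x-x^{(k)}]\,y\|_2 \;\leq\; \|(x-x^{(k)})y\|_2 + \|(x-x^{(k)})H^{Rd}_\eps y\|_2 \;\lesssim\; \|x-x^{(k)}\|_p\,\|y\|_\infty,
\]
uniformly for $y$ in the unit ball of $\mathcal L(\F_n)$. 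Hence $y\mapsto[H^{Rd}_\eps,x]y$ is the uniform limit of the maps $y\mapsto[H^{Rd}_\eps,x^{(k)}]y$, each of which is finite-rank on $L^2$ by the mechanism of (ii) and therefore sends the $L^2$-bounded unit ball of $\mathcal L(\F_n)$ to a precompact subset of $L^2$. Uniform limits of precompact-image maps have precompact image, proving (iii). The main obstacle throughout is (i): the Cotlar formula is the indispensable device for converting the a priori unbounded commutator into bounded pieces plus a Haagerup-controlled low-length correction.
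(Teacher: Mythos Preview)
Your proposal is correct and follows essentially the same route as the paper: use the Cotlar identity modulo $P_{2d-2}$ for (i), then approximate by polynomials for (ii) and (iii), invoking finite-rankness of $[R_h,\lambda_g]$ and the H\"older/$L^q$-boundedness of $H^{Rd}_\eps$ respectively. The only cosmetic difference is that the paper rearranges the Cotlar identity one step further to the structural form
\[
[H^{Rd}_\eps,\,H^{Ld}_{\eps'}x_2]\;=\;H^{Ld}_{\eps'}\bigl([H^{Rd}_\eps,\,x_2]\bigr)\quad\text{up to finite rank},
\]
which makes boundedness immediate from $x_2\in\mathcal L(\F_n)$ without an explicit Haagerup estimate on the low-length correction; your two main terms $H^{Rd}_\eps H^{Ld}_{\eps'}(x_2\xi)-H^{Ld}_{\eps'}(x_2 H^{Rd}_\eps\xi)$ are exactly this commutator once you use that $H^{Rd}_\eps$ and $H^{Ld}_{\eps'}$ commute. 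A minor wording point: since the hypothesis is $|\eps'|\leq 1$, $H^{Ld}_{\eps'}$ is in general only an $L^2$-contraction rather than an isometry, but that is all you need.
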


\begin{proof}
Similar to (\ref{haag1p}), we have
\begin{eqnarray*}
 P^\bot_{  2d-2}  [(H^{Ld}_{\varepsilon'} x)(H^{Rd}_\eps y)]= P^\bot_{  2d-2} [H^{Ld}_{\varepsilon'}
  (x(H_\eps^{Rd} y ))+H_\eps^{Rd} ((H^{Ld}_{\varepsilon'} x)y)-H^{Ld}_{\varepsilon'} H^{Rd }_\eps (xy)].  
  \end{eqnarray*}
  So, up to a finite rank perturbation, for $y\in L^2(\hat\F_n)$
  $$[H^{Rd}_\eps,H^{Ld}_{\varepsilon'} x]y=-H^{Ld}_{\varepsilon'}
  (x(H_\eps^{Rd} y ))+  H^{Ld}_{\varepsilon'} H^{Rd}_\eps (xy)=H^{Ld}_{\varepsilon'}([H^{Rd}_\eps,x]y).$$
  Therefore, for $x=x_1+H^{Ld}_{\varepsilon'} x_2$, up to a finite rank perturbation
  $$[H^{Rd}_\eps, x]=[H^{Rd}_\eps, x_1]+H^{Ld}_{\varepsilon'}([H^{Rd}_\eps,x_2]).
$$
  This implies (i). Note  that    $[R_h,\la_g]$ is finite rank for each $h,g$. We have that $[H^{Rd}_\eps, x]\in {\mathbb K}(\ell_2(\F_n))$ for all $x\in C_\la^*(\F_n)$. So  (ii) is true. For (iii), following the argument of Ozawa, we have, by H\"older's inequality and Theorem \ref{main2}
  $$\|[H^{Rd}_\eps, x]y\|_{L^2(\hat\F_n)}\lesssim \|x\|_{L^p(\hat\F_n)}\|y\|_{L^q(\hat\F_n)}$$
  for any $y\in  {L^q(\hat\F_n)} ,  \frac1q+\frac1p=\frac12 $.
   By  density of $C_\la^*(\F_n)$ in $L^p(\hat\F_n), p<\infty$ and since 
${\cal L}(\F_n)\subset L^p(\hat\F_n)$ contractively, we get the desired result.
  \end{proof}
\begin{rem} When $n=1$, the  space of functions $x$  in Corollary \ref{Ozawa} (i) (resp.  (ii)) is called BMO (resp. VMO). It characterises the class of $x$ such that the commutator $[H,x]$ is bounded (resp.  compact).
\end{rem}
\begin{rem} The content of this remark is communicated to the authors by  N. Ozawa. Let $\M$ be a finite von Neumann algebra with a finite normal faithful trace $\tau$. Let $L^p({\M}), 1\leq p<\infty$ be the associated non commutative $L^p$ spaces (see \cite{PX03}). Recall that we set  $L^\infty(\M)=\M$.
For the operators $X\in B(L^2({\M})), p\geq 2$, define a semi-norm 
$$\|X\|_{L^p\rightarrow L^2}=\sup\{ \|Xy\|_{L^2({\M})};  y\in L^p{(\M})\subset L^2({\M}), \|y\|_{L^p({\M})}\leq 1\}.$$
Note $\|X\|_{L^2\rightarrow L^2}$ is just the operator norm $\|X\|$. Identify ${\M}$   as sub algebra of $B(L^2({\M}))$ by the left  multiplication on $L^2({\M})$. Let ${\M}'\subset B(L^2({\M}))$ be the sub algebra of the right multiplication of ${\M}$ on $L^2({\M}).$ For $b\in {\M}\cup {\M}'$, we have by H\"older's inequality that
$$\|b\|_{L^p\rightarrow L^2}=\|b\|_{L^q}$$
for $\frac1q+\frac1p=\frac12$.
The lemma  of \cite{O10}  Section 3 says that, for $X\in B(L^2({\M}))$,
\begin{eqnarray}
\label{O10lemma}  
\|X\|_{L^\infty\rightarrow L^2}\leq \inf\{\|Y\|\|b\|_{L^2(\M)}+\|Z\|\|c\|_{L^2(\M)}\}\leq 4\|X\|_{L^\infty\rightarrow L^2}.
\end{eqnarray}
Here the infimum is taken over all possible decomposition $X=Yb+Zc'$ with $Y,Z\in B(L^2({\M})), b\in {\M}, c'\in  {\M}', \|b\|,\|c\|\leq \|X\|$.
One can easily see that an analogue of the first inequality of (\ref{O10lemma}) holds for all $p>2$, that is
\begin{eqnarray}
\label{O10lemmap}  
\|X\|_{L^p\rightarrow L^2}\leq \inf\{\|Y\|\|b\|_{L^q(\M)}+\|Z\|\|c\|_{L^q(\M)}\},
\end{eqnarray}
for $\frac1q+\frac1p=\frac12$.
  Since $\|b\|^q_{L^q}\leq \|b\|^2_{L^2}\|b\|^{q-2}$, We get  the following H\"older-type inequality   for $X\in B(L^2({\M}))$,
\begin{eqnarray}
\label{Holder}
\|X\|_{L^p\rightarrow L^2}\leq 4\|X\|^{\frac{p-2}p}_{L^\infty\rightarrow L^2}\|X\|^{\frac2{p}}.
\end{eqnarray}
Suppose $Y\in B(L^2({\M}))$ satisfies that, for some $p>2$,
$$\|Y\|_{L^\infty \rightarrow L^p }=\sup \{ \|Yx\|_{L^p({\M})};  x\in L^\infty{(\M})\subset L^2({\M}), \|x\|_{\M}\leq 1\}<\infty.$$
Inequality (\ref{Holder}) implies that 
\begin{eqnarray}
\label{Holder2}\|XY\|_{L^\infty\rightarrow L^2}\leq \|X\|_{L^p\rightarrow L^2}\|Y\|_{L^\infty \rightarrow L^p }\leq 4\|X\|^{\frac{p-2}p}_{L^\infty\rightarrow L^2}\|X\|^{\frac2{p}}\|Y\|_{L^\infty \rightarrow L^p }.\end{eqnarray} 

Let ${\Bbb K}^L_{\cal M}\in B(L^2({\M}))$ be the collection of all operators sending the unit ball of ${\M}$ into a compact subset of $L^2({\M})$. Let ${\Bbb K}_{\M}=({\Bbb K}^L_{\M})^*\cap {\Bbb K}^L_{\M}$ be the associated $C^*$-algebra. Let $M({\Bbb K}_{\M})$ be the multiplier algebra of ${\Bbb K}_{\M}$, i.e. the algebra of all operators $X\in B(L^2({\M}))$ such that both $X{\Bbb K}_{\M}$ and ${\Bbb K}_{\M}X$ still belong to ${\Bbb K}_{\M}$.  Proposition of \cite {O10} Section 2 says that $X\in {\Bbb K}_{\M}$ iff for every sequence of finite rank projections $Q_n$ strongly converging to the identity of $B(L^2(\M))$, $\|X-Q_nX\|_{L^\infty\to L_2}\rightarrow 0$. Combining this with (\ref{Holder2}), we see that  $Y$ above belongs to $M({\Bbb K}_{\M})$. This applies to the particular case when $Y$ is the free Hilbert transform $H_\eps$ or $H^{op}_\eps$ and ${\M}$ is an amalgamated free product.
Ozawa suggested to study the $C^*$-algebra 
$$B_{\M}=\{X\in M({\Bbb K}_{\M}); [X,y]\in {\Bbb K}_{\M}, \forall y\in {\M}\subset B(L^2(\M))\}.$$
 Theorem \ref{main}  and Corollary \ref{Ozawa} (iii) imply that $H_\eps^{Rd} \in B_{{\cal L}(\F_n)}$ and similarly $H_\eps^{Ld} \in B_{{\cal L}'(\F_n)}$. Here ${\cal L}'(\F_n)$ is the   von Neumann algebra generated by the right regular representation $\rho_g$'s.

Let $\bar\F_n=\F_n\cup \partial \F_n$ and $C(\bar \F_n)$ be the $C^*$-algebra of continuous functions on $\bar\F_n$.  Note that $C(\bar \F_n)$ is isomorphic to the sub $C^*$-algebra of $ B(\ell^2(\F_n))$ generated by $\rho_g L_h\rho_{g^{-1}},   g,h\in\F_n$.  We then obtain 
$$C(\bar \F_n)\subset B_{{\cal L}'(\F_n)}.$$
\end{rem}

\subsection{Connections to Carr\'e du Champ }

We use the same notation to denote elements of $\F_\infty$ and points on its Cayley graph. The {\it Gromov product} for $g^{-1},g'$
(on the Cayley graph) is defined as
$$\langle g,g'\rangle=\frac {|g|+|g'|-|gg'|} 2.$$
A closely related object is  the so-called Carr\'e du Champ  of P. A. Meyer
\begin{eqnarray*}
\Gamma(\la_g,\la_{g'})&=&\frac{A(\la^*_g)\la_{g'}+\la^*_gA(\la_{g'})-A(\la_g^*\la_{g'})}2=\langle g^{-1},g'\rangle \la_{g^{-1}g'}
\end{eqnarray*}
associated to the conditionally negative operator $A:\la_g\mapsto |g|\la_g$.

The following is a key connection to the operator $L_h$ studied in previous sub sections, that
\begin{eqnarray}
\label{gromov}
2\Gamma(\la_g,\la_{g'})&=&\sum_{h\in \F_\infty} (L_h(\la_g))^*L_h(\la_{g'}).
\end{eqnarray}
Let us extend these notations to   $x=\sum_g c_g\la_g\in L^2(\hat\F_\infty)\otimes L^2(\m M)$, and set
\begin{eqnarray*}
A^r(x)&=&\sum_g c_g|g|^r\la_g\\
\Gamma(x,x)&=&\langle x, x\rangle= \sum c_g^*c_{g'}\langle g^{-1},g' \rangle \la_{g^{-1}g'}.
\end{eqnarray*}
We then have 
\begin{eqnarray}
\label{carre}
2\langle H_\varepsilon x, H_\varepsilon x\rangle=\sum_{h\in \F_\infty} |L_hx|^2=A(x^*)x+x^*A(x)-A(|x|^2).
\end{eqnarray} 

The following square function estimate was proved in \cite{JMP16}. One direction of the inequality had been proved in \cite{JM10} and \cite{JM12} in a more general setting.
\begin{lemma}(\cite{JMP16} Theorem A1, Example (c))\label{JMP}
For any $2\leq p<\infty$, $x\in L^p(\hat\F_\infty)\otimes L^p(\m M)$,
$$\|A^{\frac12} x\|_p\simeq^{\frac {p^4}{(p-1)^2}} \|(\sum_{h\in \F_\infty} |L_hx|^2)^\frac12\|_p+\|(\sum_{h\in \F_\infty} |L_h (x^*)|^2)^\frac12\|_p.$$
\end{lemma}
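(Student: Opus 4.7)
The plan is to recognise the right-hand side as twice the column norm of the carré du champ $\Gamma(x,x)=\frac12(A(x^*)x+x^*A(x)-A(x^*x))$: identity (\ref{carre}) gives $\sum_h|L_hx|^2=2\Gamma(x,x)$, and the analogous identity applied to $x^*$ covers the second term. Hence the lemma is a non-commutative Riesz transform equivalence
$$\|A^{1/2}x\|_p\simeq \|\Gamma(x,x)^{1/2}\|_p+\|\Gamma(x^*,x^*)^{1/2}\|_p$$
for the Markov semigroup $T_t=e^{-tA}$ on $\mathcal L(\F_\infty)$ associated to the length function $A(\la_g)=|g|\la_g$. I would set up the derivation $\delta:x\mapsto \sum_h L_h(x)\otimes e_h$ with values in $L^p(\A;\ell_2^c)$, so that $\delta^*\delta=A$ (giving the $p=2$ case with constant $1$), and fit the problem into the Junge--Mei framework of \cite{JM10,JM12}.

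For the upper estimate $\|\delta x\|_{L^p(\ell_2^c)}\lesssim \|A^{1/2}x\|_p$, I would use the subordination
$$A^{1/2}x=\frac{1}{\sqrt\pi}\int_0^\infty (x-T_tx)\,t^{-3/2}\,dt$$
to pass between the fractional power and the heat semigroup, then invoke column $H^1$/BMO duality for $T_t$ and interpolate down to general $p$. For the lower estimate $\|A^{1/2}x\|_p\lesssim \|\delta x\|_{L^p(\ell_2^c)}+\|\delta(x^*)\|_{L^p(\ell_2^c)}$, I would follow Meyer's route: first establish a Littlewood--Paley--Stein $g$-function equivalence
$$\|A^{1/2}x\|_p\simeq \Big\|\Big(\int_0^\infty |\partial_t T_tx|^2\,t\,dt\Big)^{1/2}\Big\|_p,$$
and then control this continuous square function by the discrete sum $\sum_h|L_hx|^2$ using the Markov dilation of $T_t$ (which is automatic here since $T_t$ is a central convolution semigroup) together with the non-commutative Khintchine inequalities already recorded in the paper.

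The main obstacle is the sharp constant $p^4/(p-1)^2$. Each of the two directions above naturally yields a $p^2/(p-1)$-type bound coming from the subordination integral and the Littlewood--Paley/Khintchine step respectively, so combining them produces exactly the stated order. The appearance of both column and row terms on the right-hand side is intrinsic to the non-commutative setting: polar-type decompositions force one to control $|L_hx|$ and $|L_h(x^*)|$ separately, as in the non-commutative Burkholder--Gundy or Khintchine inequalities. Managing both at once is where Meyer's dual Riesz transform trick and the full non-commutative Littlewood--Paley apparatus of \cite{JMP16} are essential, and it is the reason one cannot simply invoke the commutative harmonic-analysis proof of the Riesz transform boundedness.
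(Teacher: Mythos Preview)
The paper does not give its own proof of this lemma: it is stated as a citation of \cite{JMP16}, Theorem~A1, Example~(c), and is used as a black box in the proof of Theorem~\ref{lpp}. So there is nothing in the present paper to compare your argument against.

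That said, your outline is a reasonable sketch of the strategy behind the cited result. You correctly identify via \eqref{carre} that the square function $\sum_h|L_hx|^2$ is $2\Gamma(x,x)$ for the carr\'e du champ of the length generator $A$, so the lemma is exactly the noncommutative Riesz transform equivalence for the semigroup $e^{-tA}$ on $\mathcal L(\F_\infty)$. The derivation picture $\delta(\lambda_g)=\sum_{h\leq g}\lambda_g\otimes e_h$ with $\delta^*\delta=A$ is the right way to set things up, and the two-step plan (subordination/BMO duality for one direction, Littlewood--Paley--Stein $g$-function plus Markov dilation for the other) is indeed the architecture of \cite{JM10,JM12,JMP16}. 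One caution: your claim that ``each direction naturally yields a $p^2/(p-1)$ bound, so combining them gives $p^4/(p-1)^2$'' is heuristic; the actual accounting of constants in \cite{JMP16} is more delicate and the stated order is the outcome of their specific argument rather than a generic product of two independent $p^2$ factors. If you were writing a self-contained proof you would need to track this more carefully. But for the purposes of this paper, the lemma is simply imported from \cite{JMP16} and no proof is expected here.
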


\begin{rem}
The equivalence above may fail if one replace $L_h (x^*)$ by $(L_hx)^*$ on the right hand side. Corollary 4.9 of \cite{JM12} gives   constants   $\simeq p$ for the ``$\lesssim$" direction.
\end{rem}

\subsection{Littlewood-Paley inequalities  }

In the case of the free group we adapt the definition of the paraproducts  studied in Section 3.3. Assume
$x=\sum_gc_g\la_g\in L^p, y=\sum_hd_h\la_h \in L^q$.    We then find that 
$$x\ddag y=\sum_{g^{-1}\nleqslant h}c_gd_h\la_{gh},\ \ \  x\dag y=\sum_{g^{-1}< h}c_gd_h\la_{gh}.$$ 
Recall that we write $g\leq h$ (or $h\geq
g$) if $h=gk$ with $g,h,k$ reduced words and $g<h$ if $g\leq h$ and $g\neq h$.



We consider a decomposition of $\F_\infty$ into disjoint geodesic paths.
To get one, first pick a (randomly decided) geodesic path ${\mathbb P}_0$ starting
at the unit element $e$. Then  for any  length $1$ elements not in ${\mathbb P}_0$
pick a (randomly decided) geodesic path starting at each of them. We
then go to length $2$ elements which are not contained in any of the
previous picked paths, and pick a (randomly decided) geodesic path
starting at each of them.  We repeat this procedure and get countable
many disjoint geodesic paths ${\mathbb P}_n$ such that $\cup_n
{\mathbb P}_n=\F_\infty$.

Let $T_n$ be the $L^2$-projection onto the span of ${\mathbb P}_n$. Let
$h_1(n)$ be the root of ${\mathbb P}_n$, i.e. the first element in ${\mathbb
  P}_n$. 
  Let $S_n$ be the projection to the collection of words
smaller than $h_1(n)$ (note that $S_0=0$).


\begin{corollary}\label{branch}
For any $1<p<\infty$, the maps
 $T_n$ are completely bounded on $L^p$ with  
\begin{eqnarray}
\label{Tn} 
\|T_n\|_{p\to p}\lesssim {c_p^2}.
\end{eqnarray}
Moreover, for any $p>2$
\begin{eqnarray}
\label{sumTn}
\|\sum_n |T_nx+S_nx|^2-|S_nx|^2\|_{\frac p2}\lesssim  c^2_p\|x\|^2_p.
\end{eqnarray} 
\end{corollary}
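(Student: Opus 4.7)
The plan is first to reduce to the case $h_1(n)=e$ by conjugating with the unitary $\lambda_{h_1(n)}$, which is a complete $L^p$-isometry. After this reduction, $T_n$ becomes the projection $T$ onto the geodesic path $\mathbb{P}=\{e,s_1,s_1s_2,\ldots\}$ from $e$, where the letters satisfy $s_{j+1}\neq s_j^{-1}$. Setting $v_d=s_1\cdots s_d$, one verifies the self-similarity $(T-E)x=\lambda_{s_1}T'(\lambda_{s_1}^{-1}x)$, where $T'$ is the path projection for the shifted path $\{e,s_2,s_2s_3,\ldots\}$. This will be the core structural identity.

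To bound $\|Tx\|_p\lesssim c_p^2\|x\|_p$, I would apply Theorem \ref{improvedfree} to $(T-E)x\in W_{\geq 1}$. The $\tau$-trace (row) part $\|(\tau\otimes\mathrm{Id})((T-E)x((T-E)x)^*)\|_{p/2}^{1/2}=\|\sum_{d\geq 1}\hat x(v_d)\hat x(v_d)^*\|_{p/2}^{1/2}$ is dominated, by monotonicity of positive operators in $L^{p/2}(\m M)$, by $\|(\tau\otimes\mathrm{Id})(xx^*)\|_{p/2}^{1/2}\leq\sqrt{2}\,c_p\|x\|_p$. This supplies one factor of $c_p$. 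The subtle part is the column square function $\|(\sum_{|h|\leq 1}|L_h(T-E)x|^2)^{1/2}\|_p$: since every path element of length $\geq 1$ starts with $s_1$, the only non-zero contribution is $L_{s_1}(T-E)x=(T-E)x$, making a naive estimate circular. The resolution is to invoke the identity $\lambda_{s_1}^{-1}(T-E)x=T'(\lambda_{s_1}^{-1}x)$ together with Corollary \ref{J.P} applied to $h_d=v_d$, which controls sums of $L_{v_d}$-projections at the cost of a single $c_p$, producing the second factor. The main obstacle is to execute this without accumulating further powers of $c_p$ (nor the larger $\beta_p\lesssim c_p^3$) through iteration; the key is that the column channel under $\iota_1$ is one-dimensional, so its contribution is absorbed by one free-Hilbert-transform estimate rather than by fully inverting Theorem \ref{improvedfree} at each level.

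For the square-function estimate, expand $|T_nx+S_nx|^2-|S_nx|^2=|T_nx|^2+(T_nx)^*(S_nx)+(S_nx)^*(T_nx)$. The diagonal sum $\sum_n|T_nx|^2$ is controlled using that the $\mathbb{P}_n$'s partition $\F_\infty$, combined with Corollary \ref{rosfree} applied to the column/row square functions of $\{T_nx\}_n$. The cross terms $\sum_n(T_nx)^*(S_nx)+\mathrm{adj.}$ are handled by recognising $T_nx+S_nx=U_nx$ as the projection of $x$ onto the full geodesic ray $R_n=\{w<h_1(n)\}\cup\mathbb{P}_n$ starting at $e$; the first part of the corollary applied to each $U_n$ gives $\|U_nx\|_p\lesssim c_p^2\|x\|_p$, after which a Cauchy–Schwarz step in $L^{p/2}$ together with column-row square function estimates on the tree of rays controls the cross terms within the desired $c_p^2\|x\|_p^2$ bound.
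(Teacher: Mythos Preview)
Your approach diverges from the paper's and carries real gaps in both parts.

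\textbf{For \eqref{Tn}.} The paper does not go through Theorem~\ref{improvedfree} at all. Instead it writes, for a path starting at $e$,
\[
|T_0x|^2-\tau|T_0x|^2 = (T_0x)^*\dag(T_0x) + \big((T_0x)^*\dag(T_0x)\big)^*,
\]
and then observes the algebraic identity $(T_0x)^*\dag(T_0x)=x^*\dag(T_0x)$ (valid because for $g,h$ on the same geodesic ray with $g<h$ one has $g^{-1}<h$, so only the full $x$ matters on the left of $\dag$). Proposition~\ref{paraid} then yields a quadratic inequality $\|T_0x\|_p^2\lesssim c_pc_{p/2}\|x\|_p\|T_0x\|_p+\|x\|_p^2$, which solves to $\|T_0x\|_p\lesssim c_p^2\|x\|_p$. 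Your route through Theorem~\ref{improvedfree} runs into exactly the circularity you identify: the column square function $\sum_{|h|\le1}|L_h(T-E)x|^2$ collapses to $|(T-E)x|^2$, giving no gain. Your proposed fix via Corollary~\ref{J.P} is not convincing: that corollary bounds $\|(\sum_k|L_{v_k}x_k|^2)^{1/2}\|_p$ in terms of $\|(\sum_k|x_k|^2)^{1/2}\|_p$, but $Tx$ is built from the \emph{single Fourier coefficients} $\hat x(v_d)$, not from $L_{v_d}x$, and there is no cheap way to extract the former from the latter. Even granting some such reduction, any honest use of Theorem~\ref{improvedfree} costs $\beta_p\lesssim c_p^3$ on the left, so you would overshoot the stated $c_p^2$.

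\textbf{For \eqref{sumTn}.} Your plan to split $|T_nx+S_nx|^2-|S_nx|^2$ into $|T_nx|^2$ plus cross terms and then Cauchy--Schwarz the cross terms cannot work as stated: for a generic $g\in\F_\infty$ there are \emph{infinitely many} roots $h_1(n)>g$, so $\sum_n|S_nx|^2$ is typically infinite and no Cauchy--Schwarz estimate is available. Moreover, Corollary~\ref{rosfree} requires the summands to lie in distinct $\mathcal L_k\cap\mathcal R_{\phi(k)}$, a structure the $T_nx$ do not have for an arbitrary geodesic partition. The paper bypasses all of this by summing the identity \eqref{idTn} over $n$: the path-dependent pieces telescope and one is left with the decomposition-free expression
\[
\sum_n\big(|T_nx+S_nx|^2-|S_nx|^2\big)=x^*\dag x+(x^*\dag x)^*+\tau|x|^2,
\]
after which a single application of Proposition~\ref{paraid} gives \eqref{sumTn} directly. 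The cancellation between $|T_nx|^2$ and the cross terms is the whole point, and separating them destroys it.
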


\begin{proof}
We write $x=\sum c_g\la_g$ and $T_nx=\sum _{g\in {\mathbb P}_n} c_g\la_g$.
Then
\begin{eqnarray}
(T_nx)^*T_nx-\sum_{g\in {\mathbb P}_n} |c_g|^2\la_e
&=&\sum_{g< h \in {\mathbb P}_n}c^*_gc_h\la_{g^{-1}h}+\sum_{h< g \in {\mathbb P}_n}c^*_gc_h\la_{g^{-1}h}\nonumber\\
&=& (T_nx)^*\dag  T_nx+(( T_nx)^* \dag T_nx)^*.\nonumber
\end{eqnarray}
Since $(T_nx+S_nx)^*\dag T_nx=x^*\dag T_nx$, we have that 
$$(T_nx)^*\dag  T_nx =x^*\dag T_nx-(S_nx)^*T_nx.$$
Therefore,
\begin{eqnarray}\label{idTn}
(T_nx)^*T_nx-\sum_{g\in {\mathbb P}_n} |c_g|^2\la_e=x^*\dag  T_nx+(x^* \dag T_nx)^*-(S_nx)^*T_nx-(T_nx)^*S_nx.
\end{eqnarray}

In particular for $n=0$, we have actually
$$(T_0x)^*T_0x-\sum_{g\in {\mathbb P}_0} |c_g|^2\la_e= x^*\dag T_0x +( x^*\dag T_0x)^*.$$
  Assume $p>2$, by Proposition \ref{paraid}, we have  
 $$\|T_0(x)\|_{p}^2 \leq (4+2c_pc_{\frac p
    2})\|x\|_{p}\|T_0x\|_{p}+\|x\|^2_p.$$ So $\|T_0(x)\|_{p}\leq
  (5+2c_{\frac p2}c_p) \|x\|_p$ for $p>2$. One concludes that $T_0$ is (completely) bounded 
on $L^p$. One can improve the bound on $\|T_0\|_{p\to p}$
when $p$ is close to 2 by using interpolation.  The case $p<2$ follows by duality.
Thus we have obtained (\ref{Tn}) for an arbitrary ${\mathbb P}_0$ starting at $e$, for general ${\mathbb P}_n$ this follows by using translations.

Summing (\ref{idTn}) over $n$, we get
\begin{eqnarray*}
\sum_{n\geq 0} |T_nx+S_nx|^2-|S_nx|^2&=&\sum_{n\geq 0}[(T_nx)^*T_nx+(S_nx)^*T_n(x)+(T_nx)^*S_nx]\\
&=&\sum_{n\geq 0}x^*\dag   T_nx +( x^*\dag  T_nx)^*+
\sum_g |c_g|^2\la_e\\
&=&  x^*\dag x +(x^*\dag x)^*+
\tau |x|^2\la_e .
\end{eqnarray*}
(\ref{sumTn}) then follows from Proposition \ref{paraid}. 
\end{proof}
We now consider a concrete partition given by geodesic paths. For any
$h_0\notin \m R_{g^{\pm}}$ and $g\in S$, let $\mathbb
P_{h_0,g}=\{h_0g^k;k\in \N\}$, they form a countable partition of
$\F_\infty\setminus\{e\}$, we may index it with $\Z^*=\Z\setminus\{0\}$. We still denote the root of 
$\mathbb P_n$ by  $h_1(n)$ and put $h_0(n)=h_0$, $k_n=k$ if $\mathbb P_n=\mathbb
P_{h_0,g_k}$ . By definition $h_0(n)\in \m R_{g_{k_n}^\pm}^\bot$ if $h_1(n)\in \m R_{g_{k_n}}^\pm$.

\begin{lemma} \label{new} Let $T_n$ be the $L^2$-projection onto   $ {\mathbb P}_n$ described above, we have for any $p\geq 2$, $x\in L^p$.
\begin{eqnarray}
\label{sumdotTn}
\|(\sum_{n\in \Z^*} | T_nx |^2)^\frac  12 \|_{L^p}\lesssim c_p   \|x\|_p.
\end{eqnarray}
\end{lemma}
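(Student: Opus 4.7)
My plan is to derive the square-function estimate from the algebraic identity already produced inside the proof of Corollary \ref{branch}. Summing the per-path identity $(T_nx)^*T_nx-\sum_{g\in\mathbb{P}_n}|c_g|^2\lambda_e=x^*\dag T_nx+(x^*\dag T_nx)^*-(S_nx)^*T_nx-(T_nx)^*S_nx$ over $n\in\Z^*$ and using bilinearity of $\dag$ to recognise $\sum_nx^*\dag T_nx=x^*\dag x$, one obtains
\begin{equation*}
\sum_n|T_nx|^2 = x^*\dag x + (x^*\dag x)^* + \tau(|x|^2)\lambda_e - \sum_n\bigl[(T_nx)^*S_nx+(S_nx)^*T_nx\bigr].
\end{equation*}
Taking $L^{p/2}$ norms, Proposition \ref{paraid}(i) gives $\|x^*\dag x\|_{p/2}\leq(2+c_{p/2}c_p)\|x\|_p^2$ and trivially $\|\tau(|x|^2)\lambda_e\|_{p/2}\leq\|x\|_p^2$, so the whole problem reduces to establishing $\|\sum_n(T_nx)^*S_nx\|_{p/2}\lesssim c_p^2\|x\|_p^2$.

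A naive Cauchy--Schwarz through $\|(\sum_n|S_nx|^2)^{1/2}\|_p$ is hopeless: the $\lambda_e$-coefficient of $\sum_n|S_nx|^2$ is $\sum_g|\hat x(g)|^2\cdot\#\{n:g<h_1(n)\}$, which diverges already at $g=e$ since $e<h_1(n)$ for every $n$. Instead I would exploit the first-letter structure of the cross term. For $g=h_1(n)g_{k_n}^{i}\in\mathbb{P}_n$ with $i\geq 0$ and $w$ a strict prefix of $h_1(n)=ws$, one computes $g^{-1}w=g_{k_n}^{-i}s^{-1}$; since $s$ ends in $g_{k_n}$, $s^{-1}$ begins with $g_{k_n}^{-1}$, so $(T_nx)^*S_nx\in\m L_{g_{k_n}^{-1}}$. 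Grouping by the generator index $k$,
\begin{equation*}
\sum_n(T_nx)^*S_nx=\sum_{k\in\Z^*}A_k,\qquad A_k:=\!\!\sum_{n:\,k_n=k}(T_nx)^*S_nx\in\m L_{g_k^{-1}}.
\end{equation*}

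By the free group version of Corollary \ref{corgen}, $\|\sum_kA_k\|_{p/2}$ is equivalent, up to the constant $\sqrt2\,c_{p/2}$, to the column-row norm $\|(A_k)\|_{L^{p/2}(\ell_2^{cr})}$. To finish I would identify each $A_k$ with a specific sub-component of the paraproduct $x^*\dag L_{g_k^\pm}x$: the pairs $(g,w)$ contributing to $A_k$ are exactly those where $w$ is a strict prefix of the root of the path containing $g$ and that root ends in $g_k$, and this selection is the $\dag$-condition applied after projecting $x$ onto the appropriate last-letter class. Proposition \ref{paraid}(i), applied with $(p,q,r)=(p,p,p/2)$, then supplies the desired column-row bound $\|(A_k)\|_{L^{p/2}(\ell_2^{cr})}\lesssim c_p^2\|x\|_p^2$. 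The main difficulty I anticipate is precisely this last identification: the bookkeeping from the reduced-word computation $g^{-1}w=g_{k}^{-i}s^{-1}$ must be aligned with the formal definition of $\dag$ on arbitrary elements so that Proposition \ref{paraid} is invoked with the right operands; once this is done, the square-function bound closes with the claimed constant of order $c_p$.
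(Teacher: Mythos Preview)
Your reduction to the cross term $\sum_n(T_nx)^*S_nx$ is correct, and so is the observation that each $(T_nx)^*S_nx\in\m L_{g_{k_n}^{-1}}$. The gap is in the final identification. The selection ``$w$ is a strict prefix of the root $h_1(n_g)$'' is strictly stronger than the $\dag$-condition ``$w<g$'': the pairs you drop are exactly those with $w,g$ in the \emph{same} path $\mathbb P_n$. Concretely (writing things on the adjoint side, and with $R_{g_k}$ rather than $L_{g_k^\pm}$, which is what the first-letter computation actually gives),
\[
A_k^{\,*}\;=\;\sum_{n:k_n=k}(S_nx)^*T_nx\;=\;R_{g_k}\bigl(x^*\dag x\bigr)\;-\;B_k,\qquad B_k=\sum_{n:k_n=k}(T_nx)^*\dag T_nx.
\]
The piece $R_{g_k}(x^*\dag x)$ is indeed handled by Proposition~\ref{paraid} together with Corollary~\ref{corgen}. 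But $B_k$ is not: since each $T_nx$ is supported on a geodesic, $(T_nx)^*\dag T_nx+\bigl((T_nx)^*\dag T_nx\bigr)^*=|T_nx|^2-\tau|T_nx|^2$, hence $\sum_k(B_k+B_k^*)=\sum_n|T_nx|^2-\tau|x|^2$, and you are back to the very quantity you want to bound. So Proposition~\ref{paraid} alone cannot close the loop; your ``bookkeeping'' step is not a technicality but hides a genuine circularity.

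The paper's proof avoids this by exploiting what is special about \emph{this} partition rather than the general identity from Corollary~\ref{branch}: for $\mathbb P_{h_0,g_k}$ one has $|T_nx|^2\in\m L(\langle g_{k_n}\rangle)$, which yields the exact formula $\sum_{n:k_n=k}|T_nx|^2=E_{|k|}|R_{g_k}x|^2$ with $E_{|k|}$ the conditional expectation onto the abelian subalgebra generated by $\lambda_{g_k}$. Summing over $k$ writes $\sum_n|T_nx|^2-\tau|x|^2$ as a sum of freely independent mean-zero length-one elements $\mathring{E_k(|R_{g_k}x|^2+|R_{g_{-k}}x|^2)}$, and the free Rosenthal inequality for such sums (Theorem~A of \cite{JPX07}, or equivalently Corollary~\ref{ros}) then finishes the estimate. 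That extra Rosenthal input is precisely what your sketch is missing.
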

\begin{proof} 


We may assume $\tau x=0$. Let $E_k$ be the projection from the group
von Neumann algebra ${\cal L}(\F_\infty)$ onto the von Neumann algebra
generated by $\la_{g_k}$. We can easily verify that for $k\in \Z^*$
\begin{eqnarray*}
 E_{|k|} | R_{g_k}x|^2 
 &=&E_{|k|}|\sum_{h_1(n)\in R_{g_k}} T_nx  |^2  =\sum_{h_1(n)\in R_{g_k}} | T_nx|^2,
 \end{eqnarray*}
  because, if  $h_1(n),h_1(n')\in \m R_{g_k}$, then $h_0(n),h_0(n')\in \m R_{g_k}^\bot$ and $h^{-1}_0(n)h_0(n')\in E_{|k|}(\F_\infty)$ iff $n=n'$. 
 Therefore,
\begin{eqnarray*}
\sum_{n\in {\Z^*}} | T_nx  |^2  =\sum_{k=1}^\infty E_k (| R_{g_k}x|^2+| R_{g{-k}}x|^2)= \tau |x|^2 + \sum_{k=1}^\infty \bmr{E_k (| R_{g_k}x|^2+| R_{g_{-k}}x|^2)}.
\end{eqnarray*}
By the free Rosenthal inequality (Theorem A in \cite{JPX07}) for length one polynomials, we get
for $p\geq 4$, with $X_k=\bmr{E_k (| R_{g_k}x|^2+|
  R_{g_{-k}}x|^2)}$.
\begin{eqnarray*}
\|(\sum_n |T_nx |^2)^\frac 12 \|^2_{p}&\lesssim & \tau |x|^2+
(\sum_{k\in \N}\|X_k\|_{\frac p2}^\frac p2)^\frac2p +(\sum_{k\in \N}\| X_k \|
_{2}^2)^\frac12\\ &\lesssim& (\sum_{k\in \Z^*}\| R_{g_k}x \| _{
  p}^p)^\frac 2p +(\sum_{k\in \Z^*}\| R_{g_k}x \|
_{4}^4)^\frac12\\ &\lesssim& \|(\sum_{k\in \Z^*}| R_{g_k}x |^2 )^\frac1
2\|_p^2 \lesssim^{c_p^2} \|x\|_p^2.
\end{eqnarray*}
Where we used the obvious facts by interpolation that
$L^p(\ell_2^c)\to \ell^p(L^p)$ and $L^p(\ell_2^c)\to \ell_4(L_4)$ are
contractions.
 The case of $p=2$ is
obvious. We then get the estimate for all $2\leq p<\infty$ by
interpolation.
 \end{proof} 
\bigskip
Let ${\mathbb P_j}=\{h_{1}(j)<h_{2}(j)<\cdots h_{k}(j)< \cdots\}$ be arbitrary geodesic paths of $\F_\infty$. For $x_j=\sum_{k\in \N} c_{k}\la_{h_{k}(j)}$ supported on $\mathbb P_j$, we consider its dyadic parts 
\begin{equation}M_{n,j} x=\sum_{2^{n}\leq k<2^{n+1}}  c_{k} \la_{h_{k}(j)}.
\end{equation}
 Dealing with $\F_1=\Z$ with the $\N\cap\{0\}$ and $-\N$ as geodesic paths, the classical Littlewood-Paley theory says that 
\begin{equation}\label{lp1}
\|(\sum_{n=1}^\infty|M_{n,1}x|^2+|M_{n,2}x|^2)^\frac12\|_{L^p}\simeq ^{c_p}\|x\|_{L^p}\end{equation}
for all $1<p<\infty$ and   $x\in C_c(\Z)$.

In \cite{JMP16} , the authors proved a ``smooth" one sided version of  (\ref{lp1}) for  $p>2$ and for $x$ supported on a  geodesic path. The following theorem is  a ``truncated" version of it and says a little more.  

\begin{thm} \label{lpp} For $x_j$ supported on geodesic paths ${\mathbb P}_{j}$, we have
\begin{equation}
\|(\sum_{n,j=1}^\infty|M_{n,j}x_j|^2)^\frac12\|_{p}\leq C{p^2c_p^2}  \|(\sum_j|x_j|^2)^\frac12\|_{p}\end{equation}
for all $2\leq p<\infty$.
\end{thm}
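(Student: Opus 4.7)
My plan is to combine the dyadic decomposition on each geodesic path with the path summation machinery from Corollary \ref{branch} and Lemma \ref{new}. The target constant $p^2 c_p^2$ suggests a clean split: one factor of $c_p^2$ comes from the path decomposition (as in Corollary \ref{branch}), and $p^2$ comes from a dyadic Littlewood-Paley square function at fixed path.

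The key first observation is a factorization. For $x_j = \sum_k c_k^{(j)}\lambda_{h_k(j)}$ supported on the geodesic path $\mathbb{P}_j$, one has
\[
M_{n,j}x_j = \lambda_{h_{2^n}(j)}\,\tilde{x}_{n,j},\qquad
\tilde{x}_{n,j} := \sum_{m=0}^{2^n-1}c_{2^n+m}^{(j)}\lambda_{h_{2^n}(j)^{-1}h_{2^n+m}(j)},
\]
where $\tilde{x}_{n,j}$ is supported on a length-$2^n$ geodesic path starting at $e$ (since $\mathbb{P}_j$ is a geodesic). Because $\lambda_{h_{2^n}(j)}$ is unitary, $|M_{n,j}x_j|^2 = |\tilde{x}_{n,j}|^2$, so the left-hand side becomes $\|(\sum_{n,j}|\tilde{x}_{n,j}|^2)^{1/2}\|_p$. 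Moreover, writing $M_{n,j}x_j = (L_{h_{2^n}(j)} - L_{h_{2^{n+1}}(j)})x_j$ exhibits the dyadic block as a difference of uniformly $L^p$-bounded left-prefix projections (Corollary \ref{Lh}).

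The second step is a paraproduct identity in the spirit of Corollary \ref{branch} (see \eqref{idTn}). Namely, viewing the collection $\{D_{n,j}\} := \{h_k(j):2^n\leq k<2^{n+1}\}$ as a disjoint family of short sub-paths refining $\{\mathbb{P}_j\}$, I would expand the quantity $\sum_{n,j}(M_{n,j}x_j)^*(M_{n,j}x_j)$ using the paraproducts $\ddag,\dag$ from Section 3.3 and their boundedness (Proposition \ref{paraid}). The diagonal part $\sum_{n,j,k\in I_n}|c_k^{(j)}|^2\lambda_e$ equals $\sum_j\tau(|x_j|^2)$, which is already controlled by the right-hand side; the off-diagonal part is estimated by combining (a) the per-path dyadic Littlewood-Paley estimate
\[
\Big\|\Big(\sum_n |M_{n,j}x_j|^2\Big)^{1/2}\Big\|_p\lesssim p\,c_p\,\|x_j\|_p,
\]
obtained from the telescoping $M_{n,j} = L_{h_{2^n}(j)}-L_{h_{2^{n+1}}(j)}$ together with a Khintchine randomization and the classical dyadic LP on $\mathbb Z$ transferred via the length-parameterization of the path, and (b) the column summation bound $\|(\sum_j|T_jx|^2)^{1/2}\|_p\lesssim c_p\|x\|_p$ from Lemma \ref{new}, applied to $x=\sum_j x_j$ or to its suitable paraproduct counterpart. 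Tracking constants through Proposition \ref{paraid} and the noncommutative Khintchine inequality yields the product $p^2c_p^2$.

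The main obstacle is the per-path dyadic estimate in (a): the sequence of projections $L_{h_{2^n}(j)}$ is nested but does \emph{not} form a filtration of conditional expectations onto von Neumann subalgebras, so the noncommutative martingale Burkholder-Gundy inequality is not directly available. I expect one has to either (i) invoke Lemma \ref{JMP}, exploiting that the dyadic scale $n$ matches the length scale $|g|\simeq 2^n$ and therefore the heat semigroup scale $t\simeq 2^{-n}$, thereby picking up the $p^2$ factor from the constant $p^4/(p-1)^2$ there, or (ii) run a direct Cotlar-type computation analogous to the proof of Theorem \ref{main}, with the dyadic scales playing the role of the index $k$, and extract the classical $p$-factor from a reiteration / interpolation step. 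Matching exactly the constant $p^2c_p^2$ then comes from balancing the row/column contributions; the rest of the argument is formal bookkeeping of Proposition \ref{paraid} and \eqref{idTn}.
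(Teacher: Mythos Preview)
Your sketch has genuine gaps. The ``transfer from $\Z$'' argument in (a) does not work: a geodesic path in $\F_\infty$ is in general not a coset of a cyclic subgroup (the letters can change at every step), so there is no group homomorphism along which to transport classical dyadic Littlewood--Paley multipliers, and a Khintchine randomisation of the telescoping $L_{h_{2^n}(j)}-L_{h_{2^{n+1}}(j)}$ alone does not produce a square-function bound with the claimed constant. Likewise Lemma~\ref{new} and Corollary~\ref{branch} are proved for \emph{specific} partitions (paths $\mathbb P_{h_0,g}$ along a fixed generator direction), so step (b) is not available for arbitrary geodesics $\mathbb P_j$; and your paraproduct expansion of $\sum_{n,j}|M_{n,j}x_j|^2$ would generate cross terms (the analogues of $S_n^*T_n$ in \eqref{idTn}, now between different dyadic blocks on the same path) whose control you do not provide.

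The paper's proof realises your option (i), but with two ingredients you are missing. First, an embedding $\F_\infty\hookrightarrow\F_\infty*\F_\infty$, $x_j\mapsto\la_{g'_jh_1(j)^{-1}}x_j$, reduces to paths whose roots are distinct generators $g'_j$; this forces $|h_k(j)|=k$ and makes the projections $L_{h_k(j)}$ separate the different $j$. Second, one does not attack $M_{n,j}$ directly but a \emph{smooth} dyadic multiplier
\[
M_{\varphi_n,j}=\sum_{2^{n-1}<k\le 2^{n+2}}a_{k,j}\,L_{h_k(j)}A^{-1/2},\qquad \sum_k a_{k,j}^2\le c,
\]
so that operator convexity of $|\cdot|^2$ gives $|M_{\varphi_n,j}x_j|^2\le c\sum_k|L_{h_k(j)}A^{-1/2}x_j|^2$. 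Applying Lemma~\ref{JMP} to $\sum_j x_j\otimes c_j$ then yields the $p^2$ factor for all $n,j$ at once. The sharp block $M_{n,j}$ is finally recovered from $M_{\varphi_n,j}$ by two translated left-prefix projections via Corollary~\ref{J.P}, which contributes the remaining $c_p^2$. The paraproduct machinery of Section~3.3 is not used here at all.
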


\begin{proof}
As usual $g_1,g_2...$ are the free generators of $\F_\infty$. We embed
$\F_\infty$ into the free product $\F_\infty*\F_\infty$ and denote by
$g'_1,g'_2,...$ the generators of the second copy of $\F_\infty$. Let
$y_j=\la_{g'_jh^{-1}_1(j)}x_j$. The $y_j$'s are supported on disjoint paths
${\mathbb P}'_j\subset \F_\infty*\F_\infty$ with roots of distinct
generators $g'_j$.  Note $|x_j|^2=|y_j|^2$ and $|M_{n } x_j|^2=|M_n
y_j|^2$. By considering $y_j$ instead, we may assume ${\mathbb
  P}_{j}=\{h_{1}(j)<h_{2 }(j)<\cdots h_{k}(j)\cdots\}$ with
$|h_k(j)|=k$ and $L_{h_k(j)}x_m=0$ for $j\neq m$.

Let \begin{eqnarray*}
M_{\varphi_n,j}=  {2^{1-\frac n2}}\sum _{2^{n-1}< k\leq 2^n}&& L_{h_{k}(j)}{A^{-\frac12}}+ \sum _{2^{n}< k\leq 2^{n+1}}(\sqrt{k}-\sqrt{ k-1}) L_{h_{k}(j)}{A^{-\frac12}}\\
&-& {2^{-\frac{n+1}2}}\sum _{2^{n+1}< k\leq 2^{n+2}} L_{h_{k }(j)}{A^{-\frac12}}.
\end{eqnarray*}
Then  
$M_{\varphi_n,j}(\la_{h_{l}(m)})=0$ unless $m=j$ and $l\in (2^{n-1},2^{n+2}]$, 
and one can check that
$$M_{\varphi_n,j}(\la_{h_{l }(j)})={\varphi_n}(l)\la_{h_{l}(j)},$$
for some $\varphi_n:\N\to \R$ with $\chi_{[2^{n}, 2^{n+1})}\leq \varphi_n\leq \chi_{(2^{n-1}, 2^{n+2})}$.
Note $$M_{\varphi(n),j}=\sum _{2^{n-1}< k\leq2^{n+2}} a_{k,j}L_{h_{k}(j)} A^{-\frac12}$$ 
with $\sum_ka^2_{k,j}\leq c$. By the convexity of the operator valued function $  |\cdot |^2$, we have 
$$|M_{\varphi_n,j}x_j|^2\leq c\sum_{2^{n-1}<k\leq2^{n+2}}|L_{h_{k}(j)}A^{-\frac12}x_j|^2,$$
and $M_{\varphi_n,j}x_m=0$ for $m\neq j$. 
Note $M_{\varphi_n,j}, M_{\varphi_{n'},j}$'s are disjoint for $|n-n'|\geq 2$. Applying Lemma \ref{JMP} to $x=\sum_j x_j\otimes c_j$, we obtain,
\begin{eqnarray*}
\|(\sum_{n=0}^\infty|M_{\varphi_n,j}x_j|^2)^\frac12\|_{p}\leq c\|( \sum_{k,j}|L_{h_{k}(j)}A^{-\frac12}x_j|^2)^\frac12\|_p \leq cp^2  \|(\sum_j|x_j|^2)^\frac12\|_p.
\end{eqnarray*}

Assume $h_{2^n}(j)\in \m R_{g_{n,j}}, h_{2^{n+1}}(j)\in \m R_{g_{n',j}}$, we have that 
\begin{eqnarray}
\la_{h_{2^{n+1}(j) }}L_{g_{n',j}^{-1}}\la_{h_{2^{n+1}}(j)^{-1}}(\la_{h_{2^n-1}(j)}L_{g_n,j}\la_{h_{2^n-1}(j)^{-1}}M_{\varphi_n}x_j)=M_{n}x_j,
\end{eqnarray}
because $h_{2^{n+1}+1}(j)\in \m R^\bot_{g^{-1}_{n',j}}.$
By (\ref{klcf}),
\begin{eqnarray*}
\|(\sum_{n=1}^\infty|M_{n,j}x_j|^2)^\frac12\|_{p}\leq cc_p^2\|(\sum_{n=1}^\infty| M_{\varphi_n ,j}x_j|^2)^\frac12\|_{p}\leq cp^2c_p^2   \|(\sum_j|x_j|^2)^\frac12\|_{p}
\end{eqnarray*}
for all $2\leq p<\infty$.
\end{proof}

Let $M_{n,k}x=M_{n,k}T_kx$ for the $T_k$ in Lemma \ref{new}. We obtain the following  from Theorem \ref{lpp} and duality: 
\begin{corollary}   \label{llpp}For all $2\leq p<\infty$, and $x\in L^p$ 
\begin{equation}
\max\Big\{\|(\sum_{n,k }|M_{n,k}x|^2)^\frac12\|_{p},\|(\sum_{n,k }|M_{n,k}(x^*)|^2)^\frac12\|_{p}\Big\}\leq Cp^2c_p^3  \|x\|_{p},\end{equation}
for all $1\leq p<2$, and $x\in L^p$  
\begin{equation*}
\|x\|_p\leq Cp'^2c_{p'}^3  \inf\Big\{ \|(\sum_{n,k }|M_{n,k}y|^2)^\frac12\|_{p}+\|(\sum_{n,k }|M_{n,k}(z^*)|^2)^\frac12\|_{p}; x=y+z\Big\}. 
\end{equation*}
\end{corollary}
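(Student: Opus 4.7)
The plan is to deduce the $p\geq 2$ inequality directly from Theorem~\ref{lpp} and Lemma~\ref{new}, and then obtain the $1<p<2$ inequality by duality.

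For $p\geq 2$, the key observation is that $M_{n,k}x=M_{n,k}(T_kx)$, since $M_{n,k}$ is supported on the $k$-th geodesic path $\mathbb P_k$. Writing $x_k:=T_kx$ and noting that these are supported on the disjoint geodesic paths $\mathbb P_k$, Theorem~\ref{lpp} yields
$$\|(\sum_{n,k}|M_{n,k}x|^2)^{1/2}\|_p\leq Cp^2c_p^2\,\|(\sum_k|T_kx|^2)^{1/2}\|_p,$$
and Lemma~\ref{new} bounds the right-hand side by $c_p\|x\|_p$, giving the desired $Cp^2c_p^3\|x\|_p$. Applying the same argument to $x^*$ (and using $\|x^*\|_p=\|x\|_p$) handles $\|(\sum_{n,k}|M_{n,k}(x^*)|^2)^{1/2}\|_p$.

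For $1<p<2$, set $q=p'>2$ and use $\|x\|_p=\sup\{|\tau(xw^*)|:\|w\|_q\leq 1\}$. Reducing at the cost of absolute constants to $\tau(x)=\tau(w)=0$ and, in any decomposition $x=y+z$, to $\tau(y)=\tau(z)=0$ by absorbing the constant parts (which are killed by every $M_{n,k}$), the central computation is the pair of orthogonality identities
\begin{align*}
\tau(yw^*)&=\sum_{n,k}\tau\bigl(M_{n,k}(y)\,M_{n,k}(w)^*\bigr),\\
\tau(zw^*)&=\sum_{n,k}\tau\bigl(M_{n,k}(z^*)^*\,M_{n,k}(w^*)\bigr).
\end{align*}
These follow from $\sum_{n,k}M_{n,k}=\mathrm{id}$ on $\lambda_e^{\perp}$ together with the vanishing of off-diagonal pairings, a consequence of the disjointness of the (possibly inverted) dyadic blocks on distinct paths. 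The H\"older-type inequality $|\sum_j\tau(a_jb_j^*)|\leq\|(a_j)\|_{L^p(\ell_2^c)}\|(b_j)\|_{L^q(\ell_2^c)}$ applied to each sum, combined with the first part of the corollary at exponent $q$ to bound the $w$-factors by $Cq^2c_q^3\|w\|_q$, then yields
$$|\tau(xw^*)|\leq Cq^2c_q^3\Bigl(\|(\sum|M_{n,k}y|^2)^{1/2}\|_p+\|(\sum|M_{n,k}(z^*)|^2)^{1/2}\|_p\Bigr)\|w\|_q,$$
and taking the infimum over decompositions $x=y+z$ concludes the proof.

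The main obstacle is the second orthogonality identity: because $M_{n,k}(x^*)\neq(M_{n,k}x)^*$, the usual $\ell_2^{cr}$ duality does not apply. One must instead verify directly that $M_{n,k}(z^*)^*$ is supported on the inverses of the dyadic block of $\mathbb P_k$, while $M_{m,j}(w^*)$ is supported on the dyadic block of $\mathbb P_j$, so that bijectivity of $g\mapsto g^{-1}$ and disjointness of the paths (and of their dyadic blocks) force $(n,k)=(m,j)$ whenever the pairing is nonzero. All remaining steps are routine applications of results already established in the paper.
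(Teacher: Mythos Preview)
Your proposal is correct and follows exactly the route the paper indicates: the paper's own proof is the single sentence ``We obtain the following from Theorem~\ref{lpp} and duality,'' and you have simply (and correctly) unpacked both halves of that sentence, including the combination with Lemma~\ref{new} to produce the constant $Cp^2c_p^3$. Your care with the second orthogonality identity --- noting that $M_{n,k}(x^*)\neq (M_{n,k}x)^*$ and verifying the pairing directly via the support of the blocks --- is exactly the point that makes the duality go through, and is the only nontrivial detail hidden in the paper's one-line proof.
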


 \bigskip
{\bf Acknowledgment.} The first author would like to  thank  Marius Junge  for helpful discussions. An initial argument for Theorem \ref{main2} was  obtained during a  visit to him at  Urbana-Champaign.   





\bibliographystyle{amsplain}
\nocite{*}

\bigskip
\hfill \noindent \textbf{Tao Mei} \\
\null \hfill Department of Mathematics
\\ \null \hfill Baylor University \\
\null \hfill One bear place, Waco, TX  USA \\
\null \hfill\texttt{tao\_mei@baylor.edu}

\bigskip
\hfill \noindent \textbf{\'Eric Ricard} \\
  \null \hfill Laboratoire de Mathematiques Nicolas Oresme 
 \\ \null \hfill   Normandie Univ, UNICAEN, CNRS\\ \null \hfill  14032 Caen FRANCE\\
\null \hfill\texttt{eric.ricard@unicaen.fr}
\end{document}